\numberwithin{equation}{section}
\def\tagform@#1{\maketag@@@{$\langle$\ignorespaces#1\unskip\@@italiccorr$\rangle$}}
 \DeclareMathOperator{\End}{End}
\renewcommand{\phi}{\varphi}
\renewcommand{\epsilon}{\varepsilon}
\renewcommand{\kappa}{\varkappa}       
\renewcommand{\ge}{\geqslant}           
\newcommand{\Hom}{\mathop{\rm Hom}\nolimits}
\newcommand{\Ker}{\mathop{\rm Ker}\nolimits}
\DeclareMathOperator{\Aut}{Aut}
\DeclareMathOperator{\NAut}{NAut}
\DeclareMathOperator{\LKer}{LKer}
\DeclareMathOperator{\ad}{ad}
\DeclareMathOperator{\Deg}{deg}
\DeclareMathOperator{\Der}{Der}
\DeclareMathOperator{\Val}{Val}
\DeclareMathOperator{\Hal}{Hal}
\DeclareMathOperator{\Ind}{Ind}
\newtheorem{theorem}{Theorem}[section]
\newtheorem{rem}[theorem]{Remark}
\newtheorem*{rem*}{Remark}
\newtheorem*{acknow*}{Acknowledgements}
\newtheorem*{examples*}{Examples}
\theoremstyle{plain}
\newtheorem{prm}[theorem]{Problem}
\newtheorem{prop}[theorem]{Proposition}
\newtheorem*{theorem*}{Theorem}
\newenvironment{proof-sketch}{\noindent{\bf Sketch of Proof}\hspace*{1em}}{\qed\bigskip}
\newenvironment{proof-idea}{\noindent{\bf Proof Idea}\hspace*{1em}}{\qed\bigskip}
\newenvironment{proof-of-lemma}[1]{\noindent{\bf Proof of Lemma #1}\hspace*{1em}}{\qed\bigskip}
\newenvironment{proof-of-prop}[1]{\noindent{\bf Proof of Proposition #1}\hspace*{1em}}{\qed\bigskip}
\newenvironment{proof-of-thm}[1]{\noindent{\bf Proof of Theorem #1.}\hspace*{1em}}{\qed\bigskip}
\newenvironment{proof-attempt}{\noindent{\bf Proof Attempt}\hspace*{1em}}{\qed\bigskip}
\newtheorem{thm}[theorem]{Theorem}
\newtheorem{lem}[theorem]{Lemma}
\newtheorem{conj}[theorem]{Conjecture}
\newtheorem{Def}[theorem]{Definition}
\title[Some logical aspects]{Nonstandard analysis, deformation quantization and some logical aspects of (non)commutative algebraic geometry}
\author[Kanel-Belov]{Alexei Kanel-Belov}
\address[Alexei Kanel-Belov]{Bar-Ilan University, Ramat Gan, Israel}
\email{kanelster@gmail.com}
\author[Chilikov]{Alexei Chilikov}
\address[Alexei Chilikov]{Bauman Moscow State Technical University, Moscow, 105005 Russia;
Moscow Institute of Physics and Technology (National Research University), Dolgoprudnyi, Moscow Oblast, 141700 Russia}
\email{chilikov@passware.com}
\author[Ivanov-Pogodaev]{Ilya Ivanov-Pogodaev}
\address[Ilya Ivanov-Pogodaev]{Moscow Institute of Physics and Technology, Moscow, Russia}
\email{ivanov.pogodaev@gmail.com}
\author[Malev]{Sergey Malev}
\address[Sergey Malev]{Ariel University of Samaria, Ariel, Israel}
\email{sergeyma@ariel.ac.il}
\author[Plotkin]{Eugeny Plotkin}
\address[Eugeny Plotkin]{Bar-Ilan University, Ramat Gan, Israel}
\email{plotkin@math.biu.ac.il}
\author[Yu]{Jie-Tai Yu}
\address[Jie-Tai Yu]{College of Mathematics and Statistics, Shenzhen University, Shenzhen 518061, China}
\email{jietaiyu@163.com, jietaiyu@szu.edu.cn}
\author[Zhang]{Wenchao Zhang}
\address[Wenchao Zhang]{Bar-Ilan University, Ramat Gan, Israel}
\email{whzecomjm@gmail.com}
\keywords{universal algebraic geometry, affine algebraic geometry, elementary equivalence, isotypic algebras, first order rigidity,
Ind-group, Affine spaces, Automorphisms, Free associative algebras,  Weyl algebra automorphisms, polynomial symplectomorphisms, deformation quantization, infinite prime number,  semi-inner automorphism,  embeddability of varieties, undecidability,
Noncommutative Gr\"obner-Shirshov basis, finitely presented algebraic systems,
Algorithmic unsolvability, Turing machine.}
\thanks{The first three authors were supported by the Russian Science Foundation grant No. 17-11-01377,  fourth, fifth and seventh named authors were supported by the ISF (Israel Science Foundation) grant 1994/20.}
\thanks{We would like to thank the anonymous referee for suggestions regarding this paper.}
\begin{document}
\begin{abstract}
This paper surveys results related to  well-known works of B. Plotkin
 and V. Remeslennikov on the edge of algebra, logic and geometry.
We start from  a brief review of the paper and motivations. The first sections deal with model theory. In Section \ref{Pl}  we describe the geometric equivalence, the elementary equivalence, and the isotypicity of algebras. We look at these notions from the positions of universal algebraic geometry  and make emphasis  on the cases of the first order rigidity. In this setting Plotkin's problem on the structure of automorphisms of (auto)endomorphisms of free objects, and auto-equivalence of categories is pretty natural and important. Section \ref{plot-prm} is dedicated to particular cases of Plotkin's problem. Section \ref{indep} is devoted to Plotkin's problem for automorphisms of the group of polynomial symplectomorphisms. This setting has applications to mathematical physics through the use of model theory (non-standard analysis) in the studying of  homomorphisms between groups of symplectomorphisms and automorphisms of the Weyl algebra. The last two sections deal with algorithmic problems for noncommutative and commutative algebraic geometry. Section \ref{sec31} is devoted to the Gr\"obner basis in  non-commutative situation. Despite the existence of an algorithm for checking equalities, the zero divisors and nilpotency  problems  are algorithmically unsolvable. Section \ref{sec32} is connected with the problem of embedding of algebraic varieties; a sketch of the proof of its algorithmic undecidability over a field of characteristic zero is given.
\end{abstract}
\maketitle
\begin{flushright}
Dedicated to the 70-th anniversary of A.L. Semenov\\
and to the 95-th anniversary of B.I.~Plotkin.
\end{flushright}

\tableofcontents

\section{Introduction}\label{intro}
The connections between algebraic geometry and mathematical logic are extremely important.
First of all, notice a deep connection between algebra, category theory and model theory
 inspired by the results of Plotkin and his school
(see
\cite{Plotkin_UA-AL-Datab, Plotkin_Haz, Plotkin_7_lec, Plotkin_VarAlg-AlgVar, Pl-Sib}.
Note that this research is related to
 the one  of  most striking examples of interaction between model theory and geometry given by  solutions of the famous Tarskii's problem, see \cite{KM, Se}.
 Another outstanding  achievement is the theory of Zariski geometries developed by B.Zilber and E.Hrushovski \cite{HZ} and \cite{Z2,Z3}. In addition, the use of non-standard analysis has allowed progress in the theory of polynomial $64$ automorphisms. See the work of Belov and Kontsevich \cite{4,3}. For a detailed bibliography see \cite{b23}.


The foundations of algebraic geometry had an important part of the translation of topological and arithmetical properties into a purely algebraic language (\cite{Grothendieck}). 
Translation of the algebraic properties of a variety into the language of mathematical logic can be considered somehow in the spirit of this program.

 This short survey is related to ideas  contained in the 
 works of B. Plotkin and V. Remeslennikov and their followers. We assume that lots of questions  still require further illumination.

In particular, the following question is of interest: given two algebraic sets. Is there an algorithm for checking isomorphism? Similarly for birational equivalence. Is there a solution to the nesting problem of two varieties? For characteristics $0$, the answer is no \cite{Bel-Chil2}, but for a positive characteristic the answer is unknown.

On should mention a number of conjectures related to the theory of models and polynomial automorphisms expressed in the paper by Belov-Kontsevich \cite{4}. The investigations of the Plotkin school are far from completion, thus the relationship between the theory of medallions and the theory of categories is relevant.

 One of the goals of this paper is to narrow the gap and to draw attention to this topic. We deal with commutative and non-commutative algebraic geometry. The latter notion can be understood in several ways. There are many points of view on the subject. We touch universal algebraic geometry, some of its relations with reformational quantization and Gr\"obner basis in non-commutative situation.

The paper is organized as follows. {\bf Section \ref{mod-theor}} is devoted to {\bf various model-theoretical aspects and their applications}. More precisely, {\bf Section \ref{Pl}} deals with universal algebraic geometry and is focused around the interaction between algebra, logic, model theory and geometry. All these subjects are collected under the roof of the different kinds of logical rigidity of algebras. Under logical rigidity we mean some logical invariants of algebras whose coincidence gives rise to structural closeness of algebras in question. If such an invariant is strong enough then there is a solid ground to look for isomorphism of algebras whose logical invariants coincide.

We are comparing three types of logical description of algebras. Namely, we describe geometric equivalence of algebras, elementary equivalence of algebras and isotypicity of algebras. We look at these notions from the positions of universal algebraic geometry and logical geometry. This approach was developed by B. Plotkin and resulted in the consistent series of papers where algebraic logic, model theory, geometry and categories come together. In particular, an important role plays the study of automorphisms of categories of free algebras of the varieties. This question is highly related to description of such objects as $\Aut(\Aut)(A)$ and $\Aut(\End)(A)$, where $A$ is a free algebra in a variety.

We formulate the principal problems in this area and make a survey of the known results. Some of them are very recent while the others are quite classical. In any case we shall emphasize that we attract attention to the widely open important problem whether the finitely  generated isotypic groups are isomorphic.


The line started in  Section \ref{Pl} is continued in {\bf Section \ref{plot-prm}}.
 Problems related to universal algebraic geometry (i.e. algebraic geometry over algebraic systems) and
logical foundations of category theory gave rise to natural questions on automorphisms of categories and their auto-equivalences. The latter ones stimulate a new motivation to investigation of  semigroups of endomorphisms and groups of automorphisms of universal algebras (Plotkin's problem) (see \cite{Pl}).

Let $\Theta$ be a variety of linear algebras over a commutative-associative ring $K$ and $W=W(X)$ be a free algebra from $\Theta$  generated by a finite set $X$. Let
$H$ be an algebra from $\Theta$ and $AG_{\Theta}(H)$ be the
category of algebraic sets over $H$. Throughout the work, we refer to
\cite{b23, Pl-St} for definitions of the Universal Algebraic Geometry (UAG).

The category  $AG_{\Theta}(H)$ is considered as the logical invariant of an algebra $H$. By Definition \ref{def:gsim}, two algebras $H_{1}$ and $H_{2}$ are
geometrically similar if the categories $AG_{\Theta}(H_{1})$ and
$AG_{\Theta}(H_{2})$ are isomorphic.
It has been shown in \cite{Pl-St}, (cf., Proposition \ref{prop:eq}) that  geometrical similarity of algebras
is
determined by the structure of the group $\Aut(Theta^{0})$, where
$\Theta^{0}$ is the category of free finitely generated algebras
of $\Theta$.  The latter problem is treated by means of Reduction Theorem (see \cite{Pl-St}, \cite{KLP}, \cite{MPP2}, \cite{PZ1}). This theorem reduces investigation of automorphisms of the whole category $\Theta^0$ of free algebras in $\Theta$ to studying the  group $\Aut(\End(W(X)))$  associated with $W(X)$ in $\Theta^0$.

In Section \ref{plot-prm} we provide the reader with the results, describing $\Aut(\End(A))$, where $A$ is finitely generated free commutative or  associative algebra, over a field $K$. 


  We prove that the group $\Aut(\End(A))$ is generated by
semi-inner and mirror automorphisms of $\End(A)$ and the group
$\Aut(\mathcal{A}^{\circ})$ is generated by semi-inner and mirror
automorphisms of the category of free algebras $\mathcal{A}^{\circ}$.

 Earlier, the description of $\Aut(\mathcal{A}^{\circ})$  for the
 variety $\mathcal{A}$ of associative algebras over algebraically closed fields
 has been given in \cite{b19} and, over infinite fields, in \cite{b3}.
Also in the same works, the description of $\Aut(\End(W(x_{1},
x_{2})))$ has been obtained.

 Note that a description of the groups $\Aut(\End(W(X)))$ and $\Aut(\Theta^{\circ})$ for some other varieties
$\Theta$ has been given in \cite {b2, b3, b4, b8, b15, b16, b16a, b19,
b20, b21, MPP2, b27}.


A group of automorphisms of ind-schemes was computed in \cite{BEJ}. In investigating the Jacobian conjecture and automorphisms of the Weyl algebra, Plotkin's problem for symplectomorphisms is also extremely important. Such problems are associated with mathematical physics and the theory of $\mathcal{D}$-modules.


{\bf Section \ref{indep}} is devoted to {\bf  mathematical physics and model theory.}
This relation deals with nonstandard analysis.  We refer the reader to the review \cite{KanoveiLubec}.

The Belov--Kontsevich conjecture \cite{4}, sometimes Kanel-Belov--Kontsevich conjecture, dubbed $B{-}KKC_n$ for positive integer $n$, seeks to establish a canonical isomorphism between automorphism groups of algebras
\begin{equation*}
\Aut (A_{n,\mathbb{C}})\simeq \Aut(P_{n,\mathbb{C}}).
\end{equation*}
Here $A_{n,\mathbb{C}}$ is the $n$-th Weyl algebra over the complex field,
\begin{equation*}
A_{n,\mathbb{C}}=\mathbb{C}\langle x_1,\ldots,x_n,y_1,\ldots,y_n\rangle/(x_ix_j-x_jx_i,\; y_iy_j-y_jy_i,\; y_ix_j-x_jy_i-\delta_{ij}),
\end{equation*}
and $P_{n,\mathbb{C}}\simeq \mathbb{C}[z_1,\ldots,z_{2n}]$ is the commutative polynomial ring viewed as a $\mathbb{C}$-algebra and equipped with the standard Poisson bracket:
\begin{equation*}
\lbrace z_i,z_j\rbrace =\omega_{ij}\equiv \delta_{i,n+j}-\delta_{i+n,j}.
\end{equation*}
The automorphisms from $\Aut(P_{n,\mathbb{C}})$ preserve the Poisson bracket.

Let $\zeta_i,\;i=1,\ldots,2n$ denote the standard generators of the Weyl algebra (the images of $x_j,\;y_i$ under the canonical projection). The filtration by total degree on $A_{n,\mathbb{C}}$ induces a filtration on the automorphism group:
\begin{equation*}
\Aut^{\leq N}( A_{n,\mathbb{C}}):=\lbrace f\in\Aut(A_{n,\mathbb{C}})\;|\;\Deg f(\zeta_i),\;\Deg f^{-1}(\zeta_i)\leq N,\forall i=1,\ldots,2n\rbrace.
\end{equation*}
The obvious maps
\begin{equation*}
\Aut^{\leq N}( A_{n,\mathbb{C}})\rightarrow \Aut^{\leq N+1} (A_{n,\mathbb{C}})
\end{equation*}
are Zariski-closed embeddings, the entire group $\Aut (A_{n,\mathbb{C}})$ is a direct limit of the inductive system formed by $\Aut^{\leq N}$ together with these maps. The same can be said for the symplectomorphism group $\Aut(P_{n,\mathbb{C}})$.

The Belov--Kontsevich conjecture admits a stronger form, with $\mathbb{C}$ being replaced by the rational numbers. The latter conjecture will not be treated here in any way.

Since Makar-Limanov \cite{8}, \cite{9}, Jung \cite{6} and van der Kulk \cite{7}, the B-KK conjecture is known to be true for $n=1$. The proof is essentially a direct description of the automorphism groups. Such a direct approach however seems to be completely out of reach for all $n>1$. Nevertheless, at least one known candidate for isomorphism may be constructed in a rather straightforward fashion. The idea is to start with an arbitrary Weyl algebra automorphism, lift it after a shift by a certain automorphism of $\mathbb{C}$ to an automorphism of a larger algebra (of formal power series with powers taking values in the ring $^*\mathbb{Z}$ of hyperintegers) and then restrict to a subset of its center isomorphic to $\mathbb{C}[z_1,\ldots,z_{2n}]$.


This construction goes back to Tsuchimoto \cite{13}, who devised a morphism
 $\Aut (A_{n,\mathbb{C}})\rightarrow \Aut(P_{n,\mathbb{C}})$ in order to prove the stable equivalence between the Jacobian and the Dixmier conjectures. It was independently considered by Kontsevich and Kan\-el-Bel\-ov \cite{3}, who offered a shorter proof of the Poisson structure preservation which does not employ $p$-cur\-va\-tur\-es. It should be noted, however, that Tsuchimoto's thorough inquiry into $p$-cur\-va\-tur\-es has exposed a multitude of problems of independent interest, in which certain statements from the present paper might appear.


The construction we describe in detail in the following sections differs from that of Tsuchimoto in one aspect: an automorphism $f$ of the Weyl algebra may in effect undergo a shift by an automorphism of the base field $\gamma:\mathbb{C}\rightarrow\mathbb{C}$ prior to being lifted, and this extra procedure is homomorphic. Taking $\gamma$ to be the inverse nonstandard Frobenius automorphism (see below), we manage to get rid of the coefficients of the form $a^{[p]}$, with $[p]$ an infinite prime, in the resulting symplectomorphism. The key result here is that for a large subgroup of automorphisms, the so-called tame automorphisms, one can 
completely eliminate the dependence of the whole construction on the choice of the infinite prime $[p]$. Also, the resulting ind-group morphism $\phi_{[p]}$ is an isomorphism of the tame subgroups. In particular, for $n=1$ all automorphisms of $A_{1,\mathbb{C}}$ are tame (Makar-Limanov's theorem), and the map $\phi_{[p]}$ is the conjectured canonical isomorphism.


These observations motivate the question whether for any $n$ the group homomorphism $\phi_{[p]}$ is independent of infinite prime.


The next {\bf Section 3} makes emphasis on  {\bf algorithmic questions.}
First we dwell on 
 Non-Commutative Gr\"obner basis.
Questions of algorithmic decidability in algebraic structures have been studied since the 1940s.
In 1947 Markov \cite{Markov} and independently Post \cite{Post} proved that the word equality problem in finitely presented semigroups (and in algebras) cannot be algorithmically solved.
In 1952 Novikov constructed the first example of the group with unsolvable problem of word equality (see \cite{Novikov1} and \cite{Novikov2}).
In 1962 Shirshov proved solvability of the equality problem for Lie algebras with one relation
and raised a question about finitely defined Lie algebras \cite{Sh}.
In 1972 Bokut settled this problem.
In particular, he showed the existence of a finitely defined Lie algebra over an arbitrary field
with algorithmically unsolvable identity problem \cite{Bokut}.


Nevertheless, some problems become decidable if a finite Gr\"obner basis
defines a relations ideal. In this case it is easy to determine whether two elements of the algebra are equal or not (see \cite{Bergman2}).
In his work, D. Piontkovsky extended the concept of obstruction,
introduced by V. Latyshev (see \cite{Piont,Piont2,Piont3,Piont4}).
V.N. Latyshev raised the question concerning the existence
of an algorithm that can find out if a given element is either a zero divisor
 or a nilpotent element when the ideal of
relations in the algebra is defined by a finite Gr\"obner basis.

Similar questions for monomial automaton algebras can be solved.
In this case the existence of an algorithm for nilpotent element or a zero divisor was
proved by Kanel-Belov, Borisenko and Latyshev \cite{BBL}. Note that these algebras are not Noetherian and not weak Noetherian.
Iyudu showed that the element property of being one-sided zero divisor is recognizable
in the class of algebras with a one-sided limited processing (see \cite{Iudu}, \cite{IuduDisser}).
It also follows from a solvability of a linear recurrence relations system on a tree (see \cite{Belov}).


An example of an algebra with a finite Gr\"obner basis and algorithmically unsolvable problem
of zero divisor is constructed in \cite{IP}.

A notion of {\it Gr\"obner basis} (better to say {\it Gr\"obner-Shirshov basis})
first appeared in the context of noncommutative (and not Noetherian) algebra.
Note also that Poincar\'e-Birkhoff-Witt theorem can be canonically proved using Gr\"obner bases.
More detailed discussions of these questions see in \cite{Bokut}, \cite{Uf2}, \cite{BBL}.


To solve these two problems we simulate a universal Turing machine, each step of which corresponds to a multiplication from the left by a chosen letter.

The problem of the algorithmic decidability of the existence of an isomorphism between two algebraic varieties is extremely interesting and fundamental. A closely related problem is the embeddability problem. In the general form, it is formulated as follows.

{\bf Embeddability problem.}\ {\it  Let $\mathscr{A}$ and $\mathscr{B}$ be two algebraic varieties. Determine whether or not there exists an embedding of $\mathscr{A}$ in $\mathscr{B}$. Find an algorithm or prove its nonexistence.
}

In this paper, a negative solution to this problem is given even for affine varieties over an arbitrary field of characteristic zero whose coordinate rings are given by generators and defining relations.

\section{Model-theoretical aspects}\label{mod-theor}
Algebraic geometry over algebraic systems was investigated by B.I. Plotkin and his school. The section \ref{Pl} is devoted to this approach. In connection with this approach, Plotkin's problem about the automorphism of semigroups of endomorphisms of free algebra and categories (and also of groups of automorphisms) arose. The section \ref{plot-prm} is devoted to Plotkin's problem of endomorphisms and automorphisms. The problem of describing automorphisms for groups of polynomial symplectomorphisms and automorphisms of the Weyl algebra is extremely important, both from the point of view of mathematical physics and from the point of view of the Jacobian conjecture. Section \ref{indep} is dedicated to this problem.

\subsection{Algebraic geometry over algebraic systems}\label{Pl}

\subsubsection{\bf Three versions of logical rigidity}\label{Equiv}

Questions we are going to illuminate in this section are concentrated around the interaction between algebra, logic, model theory and geometry.

The main question behind further considerations is as follows. Suppose we have two algebras equipped with a sort of logical description.

\begin{prm} When the coincidence of logical descriptions  provides an isomorphism between algebras in question?
\end{prm}

With this aim we consider different kinds of logical equivalences between algebras. Some of the notions we are dealing with are not formally defined in the text.
For  precise definitions and references  use \cite{Halmos}, \cite{MR}, \cite{Plotkin_UA-AL-Datab},  \cite{Plotkin_Haz}, \cite{Pl-St}, \cite{PlAlPl}, \cite{PlPl}.

\medskip
\subsubsection{\bf Between syntax and semantics.}\label{synsem}
By syntax we will mean a language intended to describe a certain subject area.
In syntax we ask questions, express hypotheses and formulate the results. In syntax we
also build chains of formal consequences. For our goals we use first-order languages or their fragments. Each language is
based on some finite set of variables that serve as the alphabet, and a number of rules that
allow us to build words based on this alphabet. In general, its signature includes Boolean
operations, quantifiers, constants, and also functional symbols and predicate symbols. The
latter ones are included in atomic formulas and, in fact, determine the face of a particular
language. Atomic formulas will be called words. Words together with logical operations
between them will be called formulas.

By semantics we understand the world of models, or in other words, the subject
area of our knowledge. This world exists by itself, and develops according to its laws.

Fix a variety of algebras $\Theta$. Let $W(X)$, $X=\{x_1, \ldots, x_n\}$ denote the finitely generated  free algebra in $\Theta$. By equations in $\Theta$ we mean expressions of the form $w\equiv w'$, where $w$, $w'$ are words in $W(X)$ for some $X$. This is our first syntactic object.  Next, let $ \tilde \Phi=(\Phi(X), X\in \Gamma)$ be the multi-sorted Halmos algebra of first order logical formulas based on atoms $w\equiv w'$, $w$, $w'$ in $W(X)$, see \cite{Pl-St}, \cite{Plotkin_AGinFOL}, \cite{PlPl}. There is a special procedure to construct such an algebraic object which plays  the same role with respect to First Order Logic as Boolean algebras do with respect to Propositional calculus. One can view elements of  $\tilde\Phi=(\Phi(X), X\in \Gamma)$ just as first order formulas over $w\equiv w'$.


Let $X=\{x_1, \ldots , x_n \}$ and let $H$ be an algebra in the variety $\Theta$. We have an affine space $H^X$ of points  $\mu : X \to H$. For every $\mu$ we have also the $n$-tuple $(a_1, \ldots , a_n) = \bar a$ with $a_i = \mu(x_i)$. For the given $\Theta$ we have the homomorphism $$\mu : W(X) \to H$$ and, hence, the affine space is viewed as the set of homomorphisms
$$\Hom(W(X),H).$$
The classical kernel $\Ker(\mu)$ corresponds to each point $\mu : W(X) \to H$. This is exactly the set of equations for which the  point $\mu$ is a solution. Every point $\mu$ has also the logical kernel $\LKer(\mu)$, see \cite{PlAlPl}, \cite{Plotkin_7_lec},  \cite{Plotkin_AGinFOL}. Logical kernel $\LKer(\mu)$ consists of all formulas $u \in \Phi(X)$ valid on the point $\mu$. This is always an ultrafilter in $\Phi(X)$.


So we define syntactic and semantic areas where logic and geometry operate, respectively. Connect them by a sort of Galois correspondence.

 Let $T$ be a system of equations in $W(X)$. The set $A$ in the affine space $\Hom(W(X),H)$ consisting of all solutions of the system $T$ corresponds  to $T$. Sets of such kind are called {\it algebraic sets}. Vice versa, given a set $A$ of points in the affine space consider all equations $T$ having $A$ as the set of solutions. Sets $T$ of such kind are called {\it closed congruences} over $W$.

 We can do the same correspondence with respect to arbitrary sets of formulas. Given a set $T$ of formulas in algebra of formulas (set of elements) $\Phi(X)$, consider the set $A$ in the affine space, such that every point of $A$ satisfies every formula of $\Phi$. Sets of such
kind are called {\it definable sets}. Points of $A$ are called solutions of the set of formulas $T$.  Conversely,  given a set $A$ of points in the affine space consider all formulas (elements) $T$ having $A$ as the set of solutions. Sets $T$ of such kind are {\it closed filters} in  $\Phi(X)$.

Let us formalize the Galois correspondence described above.

\subsubsection{\bf Galois correspondence in the Logical Geometry}\label{Galois}


Let us start with a particular case when the set of formulas $T$ in $\Phi(X)$ is a set of equations of the form $w=w'$, $w, w' \in W(X)$, $X \in \Gamma$.


 We set
$$
A = T'_H = \{ \mu : W(X) \to H \ | \  T \subset Ker(\mu)\}.
$$
Here $A$ is an {\it algebraic set} in $\Hom(W(X),H)$, determined by the set $T$.

Let, further, $A$ be a subset in $\Hom(W(X),H)$. We set
$$
T = A'_H = \bigcap_{\mu \in A} Ker(\mu).
$$
Congruences $T$ of such kind are called  $H$-closed in $W(X)$. We have also Galois-closures $T''_H$ and $A''_H$.

Let us pass to the general case of logical geometry. Let now $T$ be a set of arbitrary formulas in $\Phi(X)$. We set
$$
A = T^L_H = \{ \mu : W(X) \to H \ | \  T \subset \LKer(\mu)\}.
$$
We have also
$$
A = \bigcap_{u \in T} \Val^X_H(u).
$$
Here $A$ is called a {\it definable  set} in $\Hom(W(X),H)$, determined by the set $T$. 
 We use the term "definable" for $A$ of such kind, meaning that $A$ is defined by some set of formulas $T$.

For the set of points $A$  in $\Hom(W(X),H)$ we set
$$
T = A^L_H = \bigcap_{\mu \in A} \LKer(\mu).
$$


We have also 
$$
T = A^L_H = \{ u\in \Phi(X) \ |\  A \subset \Val^X_H(u)\}.
$$

 Here $T$ is a Boolean filter in $\Phi(X)$ determined by the set of points $A$. Filters of such kind are Galois-closed and we can define the Galois-closures of arbitrary sets $T$ in $\Phi(X)$ and $A$ in $\Hom(W(X),H)$ as $T^{LL}$ and $A^{LL}$.


\begin{rem} The principal role in all considerations plays the value homomorphism $\Val: \tilde\Phi\to \Hal_\Theta$, where $\Hal_\Theta$ is a special Halmos algebra associated with the vector space $\Hom(W(X),H)$, see \cite{Plotkin_AGinFOL}, \cite{PlPl}. Its meaning is to make the procedure of verification whether a point satisfies the formula a homomorphism.
\end{rem}

\subsubsection{\bf Logical similarities of algebras}\label{lodsym}

Now we are in a position to introduce several logical equivalences between algebras. Since
the Galois correspondence  yields the duality between syntactic and semantic objects, every definition of equivalence between algebras formulated in terms of formulas, that is logically, has its semantical counterpart, that is a geometric formulation, and vice versa.

All algebraic sets constitute a category with special rational maps as morphisms \cite{PlPl}. The same is true with respect to definable sets \cite{PlPl}. So, we can formulate logical closeness of algebras geometrically.

\begin{Def}\label{def:gsim} We call algebras $H_1$ and $H_2$  {\it geometrically similar} if the categories of algebraic sets $AG_\Theta(H_1)$ and $AG_\Theta(H_2)$ are isomorphic.
\end{Def}

By Galois duality  between closed congruences and algebraic sets,   $H_1$ and $H_2$ are  geometrically similar if and only if the corresponding   categories $C_\Theta(H_1)$ and $C_\Theta(H_2)$ of closed congruences over $W(X)$  are isomorphic.

\begin{Def}\label{def:lsim} We call algebras $H_1$ and $H_2$  {\it logically similar}, if the categories of definable sets $LG_\Theta(H_1)$ and $LG_\Theta(H_2)$ are isomorphic.
\end{Def}

By Galois duality  between closed filters in $\Phi(X)$  and definable  sets,   $H_1$ and $H_2$ are  logically similar if and only if the corresponding categories   $F_\Theta(H_1)$ and $F_\Theta(H_2)$ of closed filters in $F(X)$  are isomorphic.

We will be looking for conditions $\mathcal A$ on algebras $H_1$ and $H_2$ that provide geometrical or logical similarity.

Let two algebras $H_1$ and $H_2$ subject to some condition $\mathcal A$ be given.  Here $\mathcal A$ is any condition of logical or,  dually, geometrical character, formulated in terms of closed sets of formulas or definable sets.

\begin{Def}\label{def:lsim2} We call the condition $\mathcal A$ rigid (or $\mathcal A$-rigid)  if two algebras $H_1$ and $H_2$  subject to  $\mathcal A$ are isomorphic.
\end{Def}

\subsubsection{\bf Geometric equivalence of algebras}\label{gme}

\begin{Def}\label{def:AG} {
Algebras $H_1$ and $H_2$ are called AG-equivalent, if for every $X $ and every system of equations $T$   holds       $T''_{H_1}=T''_{H_2}$.}
\end{Def}
 AG-equivalent algebras are called also {\it  geometrically equivalent} algebras, see \cite{PlPl}, \cite{PlAlPl}, \cite{Plotkin_7_lec}. The closure $T''_{H}$ is called, sometimes, a {\it radical} of $T$ with respect to $H$. This is a normal subgroup and an ideal in cases of groups and associative (Lie) algebras, respectively.

 The meaning of Definition \ref{def:AG} is as follows. Two algebras $H_1$ and $H_2$ are AG-equivalent if they have the same solution sets with respect to any system of equations $T$.  We have the following criterion, see \cite{PlPl}.

 \begin{prop}\label{:AG}
If algeras $H_1$ and $H_2$ are AG-equivalent, then they are AG-similar.
\end{prop}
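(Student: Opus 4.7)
The plan is to exploit the Galois duality between algebraic sets and closed congruences that was just set up in the excerpt. By the remark following Definition \ref{def:gsim}, it suffices to construct an isomorphism of categories $C_\Theta(H_1)\cong C_\Theta(H_2)$ of closed congruences over the free algebras $W(X)$, $X\in\Gamma$.

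First I would observe that AG\dash equivalence gives an equality of object sets. Indeed, for any $X$ and any set $T\subset W(X)\times W(X)$ the Galois closures satisfy $T''_{H_1}=T''_{H_2}$. Applied to a congruence $T$ that is already $H_1$\dash closed, this yields $T=T''_{H_1}=T''_{H_2}$, so $T$ is $H_2$\dash closed; symmetrically in the other direction. Hence the classes of $H_1$\dash closed and $H_2$\dash closed congruences in $W(X)$ coincide as subsets of the lattice of congruences on $W(X)$, and so do the objects of $C_\Theta(H_1)$ and $C_\Theta(H_2)$ for every $X$.

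Next I would check that the morphisms also agree. A morphism $T_1\to T_2$ in $C_\Theta(H)$, with $T_i$ a closed congruence in $W(X_i)$, is by definition induced by a homomorphism $W(X_2)/T_2\to W(X_1)/T_1$, or equivalently by a homomorphism $s\colon W(X_2)\to W(X_1)$ satisfying $s(T_2)\subset T_1$, taken up to the usual equivalence $s\sim s'$ whenever $s(w)\equiv s'(w)\pmod{T_1}$ for all $w$. This description refers only to $W(X_i)$ and to the congruences $T_i$, not to $H$ itself; consequently, once the collections of closed congruences coincide, the hom\dash sets and composition laws coincide on the nose, giving the identity functor $C_\Theta(H_1)\to C_\Theta(H_2)$ as an isomorphism of categories. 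Transporting this along the Galois duality $T\leftrightarrow T'_H$ between closed congruences and algebraic sets yields the desired isomorphism $AG_\Theta(H_1)\cong AG_\Theta(H_2)$, i.e.\ geometric similarity.

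The only delicate point, and in my view the main (albeit small) obstacle, is verifying that the morphism data in $C_\Theta(H)$ really are intrinsic to $(W(X),T)$ and do not hide a reference to $H$; once this bookkeeping is done, the equality of object classes together with Galois duality closes the argument. The proposition should then follow without further calculation.
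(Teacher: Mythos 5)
Your proof is correct, and the paper itself gives no proof of this proposition (it only points to \cite{PlPl}), so there is no in-paper argument to compare against; your route via the Galois duality of the remark after Definition \ref{def:gsim} --- showing first that AG-equivalence forces the classes of $H_1$-closed and $H_2$-closed congruences in each $W(X)$ to coincide, and then that the hom-sets of $C_\Theta(H)$ are determined by the pairs $(W(X),T)$ alone (homomorphisms of quotient algebras $W(Y)/T'\to W(X)/T$) --- is precisely the standard argument one would expect from the cited source. The one point you flag as delicate, the intrinsicality of morphisms in $C_\Theta(H)$, is indeed fine in Plotkin's framework: a morphism of algebraic sets $A\to B$ corresponds to $s\colon W(Y)\to W(X)$ with $s(T_B)\subset T_A$ modulo $T_A$, which references only the closed congruences.
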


So, geometric equivalence of algebras provides their geometrical similarity. The next statement describes geometrically equivalent algebras.  Assume, for simplicity, that our algebras are geometrically noetherian (see \cite{Plotkin_7_lec}, \cite{MR}),  which means that every system of equations $T$ is equivalent to a finite subsystem $T'$ of $T$.  Then, see \cite{Plotkin_Gagta},

\begin{prop}\label{quasi1}
Geometrically noetherian algebras $H_1$ and $H_2$ are AG-equivalent if and only if they generate the same quasi-variety.
\end{prop}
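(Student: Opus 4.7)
The plan is to show that the radical $T''_H$ of a finite system $T$ in a given algebra $H$ is exactly the set of quasi-identity consequences of $T$ valid in $H$, and then reduce the arbitrary-$T$ case to the finite case by means of the geometric noetherianity hypothesis.

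First I would unravel the definition: an equation $w \equiv w'$ lies in $T''_H$ precisely when every homomorphism $\mu : W(X)\to H$ annihilating $T$ satisfies $w(\mu)=w'(\mu)$. When $T=\{w_i\equiv w'_i\}_{i=1}^m$ is finite, this is literally the assertion that the quasi-identity $\bigwedge_{i=1}^m (w_i\equiv w'_i)\Rightarrow(w\equiv w')$ holds in $H$. Hence, for every finite $T$, the equality $T''_{H_1}=T''_{H_2}$ is equivalent to $H_1$ and $H_2$ validating the same quasi-identities with premise $T$. Letting $T$ range over all finite systems over all finite $X$, coincidence of all such radicals is equivalent to $H_1$ and $H_2$ satisfying the same quasi-identities, which by the classical Mal'cev characterization of quasi-varieties is in turn equivalent to $H_1$ and $H_2$ generating the same quasi-variety.

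For the forward direction, AG-equivalence gives $T''_{H_1}=T''_{H_2}$ for every system $T$, so in particular for finite ones, and the previous paragraph yields coincidence of the generated quasi-varieties. For the converse, given an arbitrary $T\subseteq W(X)\times W(X)$, I would apply geometric noetherianity separately to each algebra: choose finite $T_1\subseteq T$ with $T_1''_{H_1}=T''_{H_1}$ and finite $T_2\subseteq T$ with $T_2''_{H_2}=T''_{H_2}$. Set $T_0=T_1\cup T_2$, still finite. The operator $(\,\cdot\,)''_H$ is monotone in its argument (since $S\subseteq T$ implies $T'_H\subseteq S'_H$ and hence $S''_H\subseteq T''_H$), so the chain $T_i\subseteq T_0\subseteq T$ forces $T_i''_{H_i}\subseteq T_0''_{H_i}\subseteq T''_{H_i}=T_i''_{H_i}$, i.e.\ $T_0''_{H_i}=T''_{H_i}$ for $i=1,2$. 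Since $T_0$ is finite and $H_1,H_2$ satisfy the same quasi-identities, $T_0''_{H_1}=T_0''_{H_2}$, whence $T''_{H_1}=T''_{H_2}$.

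The main obstacle is modest: it is the bookkeeping step of producing a common finite replacement $T_0$ that works simultaneously in both algebras, without which the two separate appeals to geometric noetherianity would deliver unrelated finite subsystems and the finite-case identification could not be invoked. The substantive content is the observation that the radical of a finite system coincides with the set of its quasi-identity consequences in $H$, together with Mal'cev's theorem converting "same valid quasi-identities" into "same generated quasi-variety"; both are classical and can be taken as known in the context of this survey.
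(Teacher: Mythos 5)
Your argument is correct and follows the route that the paper itself indicates (the paper cites \cite{Plotkin_Gagta} rather than giving a proof inline, but the sentences immediately following the proposition, identifying AG-equivalence with coincidence of quasi-identities, make clear this is the intended path). Your reduction of the radical $T''_H$ of a finite system to the set of quasi-identity consequences valid in $H$, the invocation of the Mal'cev characterization of generated quasi-varieties, and the bookkeeping step of passing to $T_0 = T_1 \cup T_2$ so that a single finite subsystem works simultaneously in $H_1$ and $H_2$ (justified by the monotonicity of $(\,\cdot\,)''_H$) are all sound and constitute a complete proof.
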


Hence, two algebras $H_1$ and $H_2$ are AG-equivalent if and only if they have the same quasi-identities. If we drop the condition of geometrical noetherianity, then algebras $H_1$ and $H_2$ are AG-equivalent if they have the same infinitary quasi-identities.

Let $\Theta$ be the variety of all groups. Now the question of $AG$-rigidity for groups reduces to the question when two groups generating one and the same  quasi-variety are isomorphic. Of course the condition on groups to have one and the same quasi-identities is  very weak   and the  rigidity of such kind  can happen if  both groups belong to a very narrow class of groups. In general, such a condition does not seem sensible.

Geometrical equivalence of algebras gives a sufficient condition for $AG$-similarity. It turns out that for some varieties $\Theta$ this condition is also sufficient.

\begin{thm}{}\label{th:gsim}
Let $Var(H_1)=Var(H_2)=\Theta.$ Let $\Theta$ be one of the following varieties


\begin{itemize}
\item
$\Theta=Grp$, the variety of groups,
 \item
 $\Theta=Jord$, the variety of Jordan algebras,
 \item
 $\Theta=Semi$, the variety of semigroups,
 \item  $\Theta=Inv$, the variety of inverse semigroups,
 \item  $\Theta=\mathfrak N_d$, the variety of nilpotent groups of class $d$.
 \end{itemize}
 Categories $AG_\Theta(H_1)$ and $AG_\Theta(H_2)$ are
isomorphic if and only if the algebras $H_1$ and $H_2$ are
geometrically equivalent (see \cite{Formanek}) \cite{MSZ}, \cite{Ts4},
\cite{Ts2}).

\end{thm}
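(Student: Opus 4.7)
The plan is to prove the nontrivial implication: an isomorphism of the categories $AG_\Theta(H_1)$ and $AG_\Theta(H_2)$ must force geometric equivalence of $H_1$ and $H_2$, the converse being Proposition \ref{:AG}. By the Galois duality recalled in \S\ref{Galois}, it suffices to transform an isomorphism of the dual categories $C_\Theta(H_1)\cong C_\Theta(H_2)$ of closed congruences into the pointwise equality $T''_{H_1}=T''_{H_2}$ for every system of equations $T$ in every free object $W(X)$.

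The key conceptual tool is the Reduction Theorem cited via \cite{Pl-St, KLP, MPP2, PZ1}: any such isomorphism of categories of closed congruences lifts to an autoequivalence $\varphi$ of the category $\Theta^0$ of finitely generated free algebras in $\Theta$. The heart of the argument is then variety-specific and consists of showing that $\Aut(\Theta^0)$ is as small as possible, namely that every automorphism of $\Theta^0$ is semi-inner: it acts on each free object $W(X)$ by an honest automorphism of $W(X)$ itself, possibly twisted by a mirror-type signature symmetry (such as $x\mapsto x^{-1}$ in the group case, order-reversal in the semigroup case, or a Jordan anti-automorphism in the Jordan case). This description is supplied by Formanek for $\Theta=Grp$ \cite{Formanek}, by Mashevitzky--Schein--Zhitomirski for $Semi$ and $Inv$ \cite{MSZ}, and by Tsurkov for $Jord$ and $\mathfrak N_d$ \cite{Ts4, Ts2}.

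Once semi-innerness of $\varphi$ is in hand, the remainder is formal. Writing $\sigma_X:=\varphi|_{W(X)}$, the induced bijection between the lattices of closed congruences is simply $T\mapsto \sigma_X(T)$, and since $\sigma_X\in\Aut(W(X))$ it carries every Galois closure to a Galois closure intrinsically, yielding $T''_{H_1}=T''_{H_2}$ for all $T$, which is precisely AG-equivalence. The main obstacle is unquestionably the variety-by-variety computation of $\Aut(\Theta^0)$: each case requires genuinely different combinatorial input --- group-word manipulations for $Grp$, the structure of endomorphism monoids of free semigroups for $Semi$ and $Inv$, and the nilpotent commutator calculus for $\mathfrak N_d$ --- whereas the reduction step and the final passage from semi-innerness to AG-equivalence are variety-independent formalities.
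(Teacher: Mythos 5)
Your overall strategy matches the paper's indicated one, but you misattribute the two main tools. The paper's pivot from an isomorphism $AG_\Theta(H_1)\cong AG_\Theta(H_2)$ to a condition on $\Aut(\Theta^0)$ is exactly Proposition \ref{prop:eq} (cited from \cite{Pl-Sib}): if every automorphism of $\Theta^0$ is \emph{inner}, then geometric similarity coincides with geometric equivalence. You reach for the Reduction Theorem of \cite{Pl-St,KLP,MPP2,PZ1} for this lifting step, but that theorem plays a different role in the paper: it reduces the computation of $\Aut(\Theta^0)$ (an a priori category-level object) to the single-object group $\Aut(\End(W(X)))$. It does not convert an isomorphism of the categories $C_\Theta(H_i)$ of closed congruences into an autoequivalence of $\Theta^0$; that passage is built into Proposition \ref{prop:eq}. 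So the chain should read: isomorphism of $AG$-categories $\Rightarrow$ (Proposition \ref{prop:eq}) the question becomes one about $\Aut(\Theta^0)$ $\Rightarrow$ (Reduction Theorem) it suffices to understand $\Aut(\End(W(X)))$ $\Rightarrow$ (variety-specific results of \cite{Formanek,MSZ,Ts4,Ts2}) done.

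A second point deserves care. Proposition \ref{prop:eq} as stated requires all automorphisms of $\Theta^0$ to be \emph{inner}, while your argument leans on semi-innerness plus mirror twists. For the non-linear varieties in the list (groups, semigroups, inverse semigroups, nilpotent groups of class $d$) there is no base ring to twist over, so ``semi-inner'' collapses to ``inner modulo the mirror antiautomorphism,'' and the mirror is either an honest automorphism of the free object (group case, via $x_i\mapsto x_i^{-1}$) or demonstrably harmless — this is exactly what the cited references verify. For Jordan algebras the commutativity of the product kills the mirror, but the base field does produce genuine semi-inner automorphisms, and this is where the treatment in \cite{Ts4} diverges from your ``$\sigma_X\in\Aut(W(X))$ carries closures to closures'' shortcut. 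As written, your last paragraph deduces $T''_{H_1}=T''_{H_2}$ from the existence of an automorphism $\sigma_X$ of $W(X)$, but all that follows from the categorical data is $\sigma_X\bigl(T''_{H_1}\bigr)=\bigl(\sigma_X(T)\bigr)''_{H_2}$, which is pointwise equality of closures only once $\sigma_X$ has been absorbed into the closure operator — precisely the content of innerness in Proposition \ref{prop:eq}. Your instinct to reduce to the variety-by-variety computation is correct and is indeed where the real work lives, but the formal scaffolding around it should be Proposition \ref{prop:eq} (for the lift and for the equality of radicals) followed by the Reduction Theorem (for moving from $\Theta^0$ to a single $\End(W(X))$), not the other way around.
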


Let $\Theta^0$ be the category of all free algebras of the variety $\Theta$.  The following proposition is the main tool in the proof of Theorem \ref{th:gsim}.

\begin{prop}[\cite{Pl-Sib}]\label{prop:eq} If for the variety $\Theta$ every automorphism of the category $\Theta^0$ is inner,
then  two algebras $H_1$ and $H_2$ are geometrically similar if and only if they are geometrically equivalent.
\end{prop}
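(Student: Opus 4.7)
The non-trivial direction is the converse: geometric similarity implies geometric equivalence, since the forward implication is precisely Proposition~\ref{:AG}. Assume therefore that $AG_\Theta(H_1)$ and $AG_\Theta(H_2)$ are isomorphic. By the Galois duality between algebraic sets and $H$-closed congruences stated in Section~\ref{Galois}, this amounts to an isomorphism of categories $F\colon C_\Theta(H_1)\to C_\Theta(H_2)$. The plan is to show that $F$ can be normalized so as to act as the identity on the subcategory $\Theta^0$ of free algebras, and then to read off from this normalization that the closure operators $T\mapsto T''_{H_1}$ and $T\mapsto T''_{H_2}$ coincide on all systems of equations.

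The first step is to locate $\Theta^0$ inside each $C_\Theta(H_i)$. For $H_i$ generating $\Theta$, the closure $\Delta''_{H_i}$ of the trivial congruence on $W(X)$ equals $\Delta$ itself, so each finitely generated free algebra $W(X)$ appears as an object $(W(X),\Delta)$ of $C_\Theta(H_i)$. These objects admit an intrinsic categorical description independent of the ambient $H_i$: $W(X)$ represents the functor $(W(Y),T)\mapsto (W(Y)/T)^{X}$, and equivalently it is a regular projective generator whose ``rank'' is dictated by $|X|$. Any category isomorphism $F$ must respect this description, so $F(\Theta^0)=\Theta^0$ and $F$ restricts to an automorphism $\Phi$ of $\Theta^0$.

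By the standing hypothesis, $\Phi$ is inner, i.e.\ naturally isomorphic to the identity functor on $\Theta^0$. Writing $\alpha\colon \mathrm{Id}\Rightarrow\Phi$ for the witnessing natural isomorphism, we extend $\alpha$ to all of $C_\Theta(H_2)$ by precomposing canonical quotients (each $(W(X),T')$ is a quotient of $W(X)$, so the isomorphism $\alpha_X\colon W(X)\to W(X)$ descends uniquely). Replacing $F$ by $\alpha^{-1}\circ F$, we may assume $F$ is the identity on objects and morphisms of $\Theta^0$. Now take any $H_1$-closed congruence $T$ on $W(X)$, and write $F(W(X),T)=(W(X),T')$ with $T'$ an $H_2$-closed congruence. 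The canonical projection $\pi\colon W(X)\to(W(X),T)$ in $C_\Theta(H_1)$ is universal among morphisms from $W(X)$ which identify the pairs in $T$, and its ``kernel'' (the pullback of the diagonal along $\pi\times\pi$) is exactly $T$. Applying $F$ yields a morphism $W(X)\to(W(X),T')$ with the analogous universal property, whose kernel is $T'$; since $F$ is the identity on the source $W(X)$ and on all maps between free objects, the two kernels must coincide, giving $T=T'$.

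Hence the classes of $H_1$-closed and $H_2$-closed congruences on each $W(X)$ are literally the same, and since $T''_{H_i}$ is the intersection of all $H_i$-closed congruences containing $T$, we obtain $T''_{H_1}=T''_{H_2}$ for every system of equations $T$. By Definition~\ref{def:AG} this is exactly geometric equivalence of $H_1$ and $H_2$. The main delicate point in this scheme is the second paragraph: one must pin down an intrinsic, $H_i$-independent categorical characterization of the free objects (as projective generators, or equivalently via the representability of the $X$-ary power functor) rigid enough to force any abstract $F$ to preserve them; once this identification is in place, the inner-automorphism hypothesis absorbs the residual ambiguity of $F$ on $\Theta^0$, and the universal property of quotients closes the argument.
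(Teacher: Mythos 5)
The paper gives no proof of this proposition, citing only \cite{Pl-Sib}; so there is no in-text argument to compare against. Your proposal reproduces what I take to be the intended architecture: pass via Galois duality to an isomorphism $F$ of the categories of closed congruences, identify $\Theta^0$ intrinsically inside each $C_\Theta(H_i)$ so that $F$ restricts to an automorphism of $\Theta^0$, invoke the hypothesis that $\Aut(\Theta^0)$ consists of inners to normalize $F$ to be the identity on $\Theta^0$, and then extract $T''_{H_1}=T''_{H_2}$. The overall shape is right, and you correctly flag the intrinsic characterization of $\Theta^0$ as the crux.

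Two places in the write-up are genuine gaps rather than suppressed detail. First, in the normalization step, $\alpha_X$ does \emph{not} in general ``descend'' to a self-map of $W(X)/T'$: the automorphism $\alpha_X$ of $W(X)$ need not preserve $T'$, so what it actually produces is an isomorphism from $(W(X),T')$ to the different object $(W(X),\alpha_X(T'))$; one must check that this assignment gives a functor and natural isomorphism on all of $C_\Theta(H_2)$ before conjugating $F$ by it. Second, and more seriously, the closing step asserts that the kernel of $F(\pi)$ ``is exactly $T'$.'' The Hom-set reasoning you allude to does show that $\Ker\bigl(F(\pi)\bigr)$, computed as a congruence on the source $W(X)$, equals $\Ker(\pi)=T$: for $w,w'\in W(X)$ one has $\pi\circ\hat w=\pi\circ\hat w'$ iff $F(\pi)\circ\hat w=F(\pi)\circ\hat w'$, since $F$ fixes $\Theta^0$ pointwise. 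But nothing yet guarantees that this kernel equals $T'$. A priori $F(\pi)$ is merely some morphism $W(X)\to (W(X),T')$, not necessarily the canonical projection, and a surjection $W(X)\twoheadrightarrow W(X)/T'$ can have kernel different from $T'$ while still having isomorphic quotient. The ``universal property'' you invoke (``universal among morphisms from $W(X)$ which identify the pairs in $T$'') refers to $T$ explicitly and is therefore not a categorical statement that $F$ automatically preserves. To close the gap you need an $H$-independent, categorical characterization of the canonical projections $\pi_T$, or else exploit the fact that in Plotkin's setting the objects of $C_\Theta(H)$ carry the presentation datum $(X,T)$ and not merely the abstract quotient algebra. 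You should also make explicit the standing assumption $\mathrm{Var}(H_i)=\Theta$, which you use tacitly when identifying the free object of $C_\Theta(H_i)$ with $(W(X),\Delta)$; this is assumed in Theorem~\ref{th:gsim} nearby but is not stated in the proposition itself.
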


So, studying automorphisms of $\Theta^0$ plays a crucial role.  The latter problem is treated by means of Reduction Theorem (see \cite{Pl-St}, \cite{KLP}, \cite{MPP2}, \cite{PZ1}). This theorem reduces investigation of automorphisms of the whole category $\Theta^0$ of free in $\Theta$ algebras  to studying the  group $\Aut(\End(W(X)))$  associated with a single object $W(X)$ in $\Theta^0$. Here, $W(X)$ is a finitely generated free in $\Theta$ hopfian algebra, which generates the whole variety $\Theta$. In fact, if all automorphisms of the endomorphism semigroup of a free algebra $W(X)$ are close to being inner, then all automorphisms of $\Theta^0$ possess the same property. More precisely, denote by $Inn(\End(W(X)))$ the group of inner automorphisms of $\Aut(\End(W(X)))$. Then the group of outer automorphisms $\Aut(\End(W(X)))/Inn(\End(W(X)))$ measures, in some sense, the difference between the notions of geometric similarity and geometric equivalence.

\medskip
\subsubsection{\bf Elementary equivalence of algebras}\label{ee}

As we saw in the previous section $AG$-equivalence of algebraic sets  reduces to coincidence of  quasi-identities of algebras.  This is a weak invariant, a small part of elementary theory, and, of course, coincidence of quasi-identities does not imply isomorphism of algebras.  Hence $AG$-equivalence does not make much sense from the point of view of rigidity. Now we recall a more powerful  logical invariant of  algebras.

Given algebra $H$, its {\it elementary theory} $Th(H)$ is the set of all sentences (closed formulas) valid on $H$. We modify a bit this definition and adjust it to the Galois correspondence. Fix $X=\{x_1, \ldots, x_n\}$. Define $X$-elementary theory $Th^X(H)$ to be
  the  set of all  formulas $u \in \Phi(X)$ valid in every point of the affine space $\Hom(W(X),H)$. In general we have a multi-sorted representation of the elementary theory $$Th(H)=(Th^X(H), X \in \Gamma),$$
where $\Gamma$ is a  certain  system of sets.

\begin{Def}\label{elem}
Two algebras $H_1$ and $H_2$ are said to be {\it elementarily equivalent} if their elementary theories coincide.
\end{Def}

\begin{rem} From the geometric point of view this definition does not make difference between different points of the affine space. Given algebras $H_1$ and $H_2$,  we collect all together formulas valid in every point of the affine spaces $\Hom(W(X),H_1)$ and  $\Hom(W(X),H_2)$, and  declare algebras  $H_1$ and $H_2$ elementarily equivalent if these sets coincide.
\end{rem}

  Importance of the elementary classification of algebraic structures goes back
to the famous  works of A.Tarski and A.Malcev. The main problem is to figure out {\it what are the algebras elementarily equivalent to a given one.} Very often we fix a class of algebras $\mathcal C$ and ask what are the algebras elementarily equivalent to a given algebra inside the class $\mathcal C$. So, the rigidity question with respect to elementary equivalence looks as follows.


 \begin{prm}
  Let a class of algebras $\mathcal C$ and an algebra $H\in \mathcal C$ be given. Suppose that the elementary theories of algebras $H$ and $A\in \mathcal C$ coincide. Are they  {\it elementarily rigid}, that is,  are  $H$ and $A$  isomorphic?
 \end{prm}

\begin{rem}
What we call elementary rigidity has different names. This notion appeared in the papers by A. Nies \cite{Nies}  under the name of quasi definability of groups. The corresponding name used in \cite{ALM} is first order rigidity. For some reasons which will be clear in the next section we use another term.
\end{rem}

In other words we ask for which algebras their logical characterization by means of the elementary theory is strong enough and define the algebra in the unique, up to an isomorphism, way?

 We restrict our attention to the case of groups, and, moreover, assume quite often that our groups are finitely generated.  Elementary rigidity of groups occurs not very often. Usually various extra conditions are needed. Here is the incomplete list of some known cases:

 \begin{thm}
 We will consider the following cases

 \begin{itemize}
 \item Finitely generated abelian groups are elementarily rigid in the class of such groups, see \cite{Szm}, \cite{EF}.
  \item Finitely generated torsion-free class 2 nilpotent groups are elementarily rigid in the class of finitely generated groups,  see \cite{Hirshon}, \cite{Oger} (this is wrong for such groups  of class 3 and for torsion groups of class 2, see \cite{Zilber1} ).
   \item If two finitely generated free nilpotent groups are elementarily equivalent, then they are isomorphic, that is a free finitely generated nilpotent group is elementarily rigid in the class of such groups,   see \cite{RSS}, \cite{Mal}.
   \item If two finitely generated free solvable groups are elementarily equivalent, then they are isomorphic, that is a free finitely generated solvable group is elementarily rigid in the class of such groups,   see \cite{RSS}, \cite{Mal}.
       \item Baumslag-Solitar group BS($1,n$) is elementarily rigid in the class of countable groups, see \cite{CKR}. General Baumslag-Solitar groups BS($m,n$) are elementarily rigid in the class of all Baumslag-Solitar groups, see  \cite{CKR}.
     \item Right-angled Coxeter group is elementarily rigid in the class of all right-angled Coxeter groups, see \cite{CK}.
      \item A good rigidity example is provided by profinite groups: if two finitely generated profinite groups are
elementarily equivalent (as abstract groups), then they are isomorphic \cite{JL}.
 \end{itemize}
  \end{thm}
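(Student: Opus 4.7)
The plan is to treat each listed case separately, since these are independent theorems drawn from different sources, but to organize the approaches around a common theme: \emph{recover from the first-order theory enough arithmetic or combinatorial invariants of the group to pin it down up to isomorphism}. In many of the cases this amounts to establishing bi-interpretability with the ring $\ZZ$ (or with a finite expansion of it), after which elementary equivalence propagates to isomorphism via the finite generation assumption.

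First I would dispose of the abelian case, which is essentially Szmielew's classification. Using the structure theorem $G\cong \ZZ^r\oplus\bigoplus_{p,k}(\ZZ/p^k\ZZ)^{n_{p,k}}$, the rank $r$ is read off from the $\FF_p$-dimensions of $G/pG$ and $G[p]$, and the multiplicities $n_{p,k}$ from the dimensions of the layers $p^{k-1}G[p]/p^kG[p]$; all of these are expressible by first-order sentences, so elementary equivalence forces coincidence of the invariants and hence isomorphism. For the torsion-free class 2 nilpotent case of Hirshon--Oger, I would pass through the Mal'cev correspondence to the associated $\ZZ$-bilinear form (commutator pairing on $G/Z(G)\times G/Z(G)\to [G,G]$) and show, using the finite generation hypothesis, that this pairing is encoded in the elementary theory up to isomorphism of bilinear forms, which classifies the group.

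For the free nilpotent and free solvable cases of finite rank, I would follow Myasnikov, Romanovskii, Shmel'kin and Malcev: a finitely generated free nilpotent (respectively solvable) group of class (respectively derived length) $c$ and rank $n$ admits a first-order interpretation of the ring $\ZZ$, and the parameters $n$ and $c$ are themselves first-order invariants detectable from the lower central / derived filtration and from ranks of the successive abelian quotients. Combining these two facts gives bi-interpretability with $\ZZ$ in the class of finitely generated groups, from which elementary equivalence implies isomorphism. For $\mathrm{BS}(1,n)$, I would exploit its description as the metabelian semidirect product $\ZZ[1/n]\rtimes\ZZ$ and the definability of the base subgroup via centralizer and divisibility conditions, recovering $n$ from the elementary type, as in Casals-Ruiz--Kazachkov--Remeslennikov. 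For right-angled Coxeter groups I would first recover the defining graph from the centralizer lattice of reflections, which is a first-order object, and then invoke the fact that the graph determines the group up to isomorphism. Finally, for finitely generated profinite groups, the Jarden--Lubotzky argument rests on the Nikolov--Segal theorem that every subgroup of finite index in such a group is open, so that the profinite topology is already encoded in the abstract group structure, allowing elementary equivalence of abstract groups to be converted into isomorphism of topological groups.

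The principal obstacle, and the reason these are nontrivial separate theorems rather than corollaries of a single general principle, is the explicit construction of the interpreting formulas: one must exhibit first-order definitions, uniform in parameters, of subgroups and relations that are typically given by ad hoc algebraic descriptions (centralizers, centers of centralizers, divisibility chains, torsion layers). For the free nilpotent and solvable cases in particular, defining $\ZZ$ inside the group requires a substantial amount of combinatorial group theory and cannot be reduced to soft model-theoretic manipulation. As the statement is a compendium of results established in the cited papers, a complete proof amounts to assembling those arguments; the sketch above records the unifying strategy.
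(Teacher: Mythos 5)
The paper does not prove this theorem; it is a survey compendium of known results, each attributed to the cited paper, and the "proof" in the source text is nothing more than the parenthetical citations. So there is no in-paper argument to compare your sketch against. What you have written is a reasonable high-level account of the strategies used in the cited literature, and you correctly identify the unifying theme (recover arithmetic/combinatorial invariants, often via interpretability of $\ZZ$, then use finite generation to pass from elementary equivalence to isomorphism) as well as the real content (exhibiting the defining formulas).

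One place where your sketch diverges from what the cited authors actually do is the torsion-free class~2 nilpotent case. You propose to classify the group by its commutator bilinear pairing via the Mal'cev correspondence and to show that the pairing is determined by the elementary theory. Oger's theorem \cite{Oger} is of a different shape: for finitely generated finite-by-nilpotent groups, $G\equiv H$ if and only if $G\times\ZZ\cong H\times\ZZ$. Rigidity then follows by invoking Hirshon's cancellation theorem \cite{Hirshon} to cancel the $\ZZ$ factor in the torsion-free class~2 case; this is also why the claim fails for class~3 and for torsion class~2 (cancellation breaks, cf.\ \cite{Zilber1}), something your bilinear-form route would not transparently explain. It is also worth noting that for free solvable groups, the original arguments in \cite{RSS} and \cite{Mal} predate the modern bi-interpretability framework and proceed by more explicit combinatorial group-theoretic means; presenting them uniformly through bi-interpretability with $\ZZ$ is a legitimate modern gloss but is not what those papers do. Subject to these caveats, your overview is consistent with the literature the paper points to, and since the paper itself supplies no proof, your sketch is strictly more informative than the source.
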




Consider, separately, examples of elementary rigidity for linear groups. First of all, a group which is elementarily equivalent to a finitely generated linear group is a residually finite linear group \cite{Mal1}. The rigidity cases are collected in the following theorem.

 \begin{thm}
 We will consider the following cases.

 \begin{itemize}
 \item Historically, the first result was obtained by Malcev \cite{Mal2}. If two linear groups $GL_n(K)$ and  $GL_m(F)$, where $K$ and $F$ are    fields, are elementarily equivalent, then $n=m$ and the fields $K$ and $F$ are elementarily equivalent.
  \item This result was generalized to the wide class of Chevalley groups. Let $G_1=G_\pi(\Phi,R)$ and $G_2=G_\mu(\Psi,S)$ be two elementarily equivalent Chevalley groups. Here $\Phi$, $\Psi$ denote the root systems of rank $\geq 1$, $R$ and $S$ are local rings, and $\pi$, $\mu$ are weight lattices. Then root systems and weight lattices of $G_1$ and $G_2$ coincide, while the rings are elementarily equivalent. In other words Chevalley groups over local rings are elementarily rigid in the class of such groups modulo rigidity of the ground rings \cite{Bu}.
      \item Let $G_\pi(\Phi,K)$ be a simple Chevalley group over the algebraically closed field $K$. Then  $G_\pi(\Phi,K)$ is elementarily rigid in the class of all groups (cardinality is fixed). This result can be deduced from \cite{Zilber}. In fact, this is true for a much wider class of algebraic groups over algebraically closed fields and, modulo elementary equivalence of fields,  over arbitrary fields \cite{Zilber}.
   \item Any  irreducible non-uniform higher-rank characteristic zero
arithmetic lattice is elementarily rigid in the class of all groups, see \cite{ALM}. In particular, $SL_n(\mathbb Z)$, $n>2$ is elementarily rigid.
 \item Recently, the results of \cite{ALM} have been extended to a much more wide class of lattices, see \cite{AM}.
   \item Let $\mathcal O$ be the ring of integers of a number field, and let $n\geqslant 3$. Then every  group $G$ which is elementarily equivalent to $SL_n(\mathcal O)$ is isomorphic to $SL_n(\mathcal R)$, where the rings $\mathcal O$ and $\mathcal R$ are elementarily equivalent. In other words $SL_n(\mathcal O)$ is elementarily rigid in the class of all groups modulo elementary equivalence of rings. The similar results are valid with respect to $GL_n(\mathcal O)$ and  to the triangular group   $T_n(\mathcal O)$ \cite{SM}. These results intersect in part with the previous items, since  the ring $R=\mathbb Z$ is elementarily rigid in the class of all finitely generated rings \cite{Nies}, and thus  $SL_n(\mathbb Z)$ is elementarily rigid in the class of all finitely generated groups.
         \item  For the case of arbitrary Chevalley groups the results similar to  above cited are obtained in \cite{ST} by different machinery for a wide class of ground rings. Suppose  the Chevalley group $G=G(\Phi,R)$ of rank $\geqslant 2$ over the ring $R$ is given. Suppose that the ring $R$ is elementarily rigid in the class $\mathcal C$ of rings.  Then $G=G(\Phi,R)$ is elementarily rigid in the corresponding class $\mathcal C_1$ of groups  if $R$ is a field, $R$ is a local ring and $G$ is simply connected, $R$ is a Dedekind ring of  arithmetic type, that is the ring of $S$-integers of a number field, $R$ is Dedekind ring with at least 4 units and $G$ is adjoint. In particular, if  a ring of such kind is finitely generated then it gives rise to elementary rigidity of $G=G(\Phi,R)$ in the class of all finitely generated groups.
              If $R$ of such kind is not elementarily rigid then  $G=G(\Phi,R)$ is elementarily rigid in the class of all groups modulo elementary equivalence of rings.
 \end{itemize}
  \end{thm}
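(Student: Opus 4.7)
The unifying strategy across all items of the theorem is \emph{first-order interpretation} of the ground ring (or field) inside the group itself. Concretely, the plan is to exhibit formulas $\varphi(\bar x;\bar a)$, $\alpha(\bar x,\bar y,\bar z;\bar a)$, $\mu(\bar x,\bar y,\bar z;\bar a)$ — one for the underlying set, and one each for addition and multiplication — such that the definable quotient $(\varphi(G,\bar a)/{\sim}\,;\,\alpha,\mu)$ is canonically isomorphic to the coordinate ring $R$, uniformly in $G$. Once such an interpretation is available, any $H\equiv G$ automatically interprets a ring $S\equiv R$, and the rigidity question reduces to reconstructing the group scheme from its ring of definition together with the residual first-order data (root system, weight lattice, Dynkin type).

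For Malcev's result on $GL_n(K)\equiv GL_m(F)$, I would first pin the integer $n$ down first-orderwise, e.g.\ as the maximal cardinality of a set of pairwise commuting involutions, or via the length of a chain of nested centralizers; this forces $n=m$. Then each root unipotent subgroup $U_\alpha\cong(K,+)$ is defined as (a component of) the centralizer of a suitably chosen torus element, additive structure is inherited from the group law on $U_\alpha$, and multiplication on $K$ is extracted from the Chevalley commutator formulas involving two distinct root subgroups $U_\alpha,U_\beta$ with $\alpha+\beta\in R$. Elementary equivalence then transfers the interpretation and yields $K\equiv F$.

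The same blueprint adapts to general Chevalley groups $G(\Phi,R)$ of rank $\ge 2$: Bruhat decomposition lets one uniformly define a pinning, the root system $\Phi$ and weight lattice $\pi$ are encoded by the structure constants appearing in the commutator relations, and $R$ is interpreted inside a single $U_\alpha$. The hypotheses on $R$ (local with $G$ simply connected, Dedekind of arithmetic type, Dedekind with $\ge 4$ units and $G$ adjoint) enter precisely where one needs Steinberg/Matsumoto-type relations to lift from the abstract group back to the group functor — these are the conditions under which the $K_2$-obstruction vanishes and the interpretation is faithful. For the arithmetic-lattice items (Avni--Lubotzky--Meiri), the crucial extra ingredient is bounded generation by root subgroups together with Margulis super-rigidity and the congruence subgroup property, which together make the lattice itself first-order definable in terms of its interpreted ring.

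The step I expect to be hardest is the uniform interpretation across an entire elementary class: writing down a single scheme of formulas (independent of $G$) that works for \emph{every} $H\equiv G$, and verifying that the interpreted ring $S$ is not just elementarily equivalent to $R$ but produces, via the reconstruction, a group truly isomorphic to $H$. For lattices this rests on delicate rigidity theorems; for Chevalley groups over non-local rings it is exactly the point where the arithmetic of $R$ (units, class group, $K_2$) enters and forces the restrictive hypotheses in the last item. Everything else — transferring quasi-identities, showing closure of interpreted classes under $\equiv$, and deducing isomorphism of $G$ and $H$ from isomorphism of interpreted rings plus matching Dynkin data — is comparatively routine once the interpretation is in place.
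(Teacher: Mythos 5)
The paper does not prove this theorem: it is a compendium of known results, each bullet pointing to its source (\cite{Mal2}, \cite{Bu}, \cite{Zilber}, \cite{ALM}, \cite{AM}, \cite{SM}, \cite{Nies}, \cite{ST}), so there is no proof in the paper against which to compare yours. Your sketch does capture the leitmotif of those works --- interpret the ground ring inside the group via root subgroups, Chevalley commutator formulas and Bruhat data; for arithmetic lattices, add bounded generation, the congruence subgroup property and Margulis superrigidity.

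The gap is in your final step. Interpreting $R$ in $G$ gives only the implication $H\equiv G\Rightarrow S\equiv R$ for the interpreted ring $S$; it says nothing about recovering $H$ from $S$. What the rigidity items actually need is \emph{bi-interpretability}: a definable copy of $G$ inside $R$, a definable copy of $R$ inside $G$, and definable maps identifying the two composites with the identity, so that an isomorphism of interpreted rings lifts to an isomorphism of groups. This is the technical core of \cite{ALM}, \cite{SM} and \cite{ST}, and it is not ``comparatively routine once the interpretation is in place'': it is precisely where the hypotheses (locality of $R$ with $G$ simply connected, Dedekind of arithmetic type, $\ge 4$ units with $G$ adjoint, etc.) actually bite, controlling the $K_2$-obstruction and ensuring the reconstruction is faithful. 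Moreover, even bi-interpretability only reduces group rigidity to ring rigidity; that $\mathbb Z$ (resp.\ $R$) is itself elementarily rigid is a separate input (\cite{Nies}, or the explicit hypothesis in the last bullet), and in the lattice items establishing the bi-interpretation with $\mathbb Z$ --- rather than with some larger commensurable ring of $S$-integers --- is exactly where superrigidity and CSP are indispensable. For Malcev's bullet, by contrast, plain interpretability suffices because the conclusion is only $K\equiv F$ and $n=m$, not an isomorphism; there your sketch is adequate as stated.
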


 Absolutely free groups lie on the other side of the scale of groups. It was Tarski who asked whether one can distinguish between finitely generated free groups by means of their elementary theories. This formidable  problem has been solved in affirmative, that is all free groups have one and the same elementary theory \cite{KM}, \cite{Se}. In fact, the variety of all groups is the only known variety of groups, such that a free in this variety finitely generated group is not rigid in the class of all such groups.

  \begin{prm} Construct a variety of groups different from the variety of all groups such that all free finitely generated groups in this variety have one and the same elementary theory.
  \end{prm}





\subsubsection{\bf Logical equivalence of algebras}\label{leq}

In this Section we introduce the notion of logical equivalence of algebras which can be viewed as first order equivalence. We proceed following exactly the same scheme which was applied in Section \ref{gme} with respect to the definition of geometric equivalence of algebras.

Let $H_1$ and $H_2$ be two algebras.  We will be looking for semantic logical invariant of these algebras, that is compare the definable sets over $H_1$ and $H_2$. Recall that according to Definition \ref{def:lsim} two algebras $H_1$ and $H_2$ are {\it logically similar}, if the categories of definable sets $LG_\Theta(H_1)$ and $LG_\Theta(H_2)$ are isomorphic.

Using the duality provided by Galois correspondence from Section \ref{Galois} we  will raise logical similarity to the  level of syntax.
 The principal Definition \ref{def:LG} is the first order counterpart of  Definition \ref{def:AG}.

\begin{Def}\label{def:LG}
Algebras $H_1$ and $H_2$ are called LG-equivalent (aka logically equivalent), if for every $X $ and every set of formulas $T$ in $\Phi(X)$   the  equality $T^{LL}_{H_1}=T^{LL}_{H_2}$ holds .
\end{Def}

It is easy to see that  
\begin{prop}
 If algebras $H_1$ and $H_2$ are
$LG$-equivalent then they are elementarily equivalent.
\end{prop}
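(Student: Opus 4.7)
The plan is to specialize the $LG$-equivalence condition to the simplest possible set of formulas, namely the empty set, and observe that the resulting Galois double-closure is precisely the $X$-elementary theory. This reduces the proposition to unwinding the definitions.

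First, I would recall from Section~\ref{Galois} that for any set $T \subseteq \Phi(X)$, one has $T^L_H = \{\mu \in \Hom(W(X),H) : T \subseteq \LKer(\mu)\}$. Taking $T = \emptyset$, the inclusion $\emptyset \subseteq \LKer(\mu)$ holds vacuously for every point $\mu$, hence $\emptyset^L_H = \Hom(W(X),H)$, the entire affine space.

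Next, applying the other half of the Galois correspondence to this set, I would compute
\begin{equation*}
\emptyset^{LL}_H \;=\; (\emptyset^L_H)^L_H \;=\; \bigcap_{\mu \in \Hom(W(X),H)} \LKer(\mu),
\end{equation*}
which is exactly the set of formulas $u \in \Phi(X)$ that are valid at every point of $\Hom(W(X),H)$. By Definition in Section~\ref{ee}, this is precisely $Th^X(H)$, the $X$-component of the elementary theory. Thus, in a single phrase, $Th^X(H) = \emptyset^{LL}_H$.

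Finally, I would invoke the hypothesis of $LG$-equivalence with $T = \emptyset$ for each $X \in \Gamma$, obtaining $\emptyset^{LL}_{H_1} = \emptyset^{LL}_{H_2}$, i.e.\ $Th^X(H_1) = Th^X(H_2)$ for every $X$. In the multi-sorted representation $Th(H) = (Th^X(H), X \in \Gamma)$ used in Definition~\ref{elem}, this is exactly elementary equivalence of $H_1$ and $H_2$. There is essentially no obstacle here; the only mild subtlety is to verify that the empty set is admissible in the quantification over $T$ in Definition~\ref{def:LG} (which it manifestly is, since $\emptyset \subseteq \Phi(X)$), and that the paper's multi-sorted notion of elementary equivalence indeed amounts to coincidence of all $Th^X$, as stated explicitly in Section~\ref{ee}.
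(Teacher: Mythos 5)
Your proof is correct and is exactly the natural unwinding of the definitions — the paper itself offers no proof beyond the remark that the statement is easy to see, and the intended argument is surely the one you give: take $T = \emptyset$, observe $\emptyset^{LL}_H = Th^X(H)$, and read off elementary equivalence.
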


Now we want to understand what is the meaning of logical equivalence.

\begin{Def}\label{def:iso}
 Two algebras $H_1$ and $H_2$ are called \textit{$LG$-isotypic} if for every point $\mu:W(X)\to H_1$ there exists a point $\nu:W(X)\to H_2$ such that $\LKer(\mu)=\LKer(\nu)$ and, conversely, for every point $\nu:W(X)\to H_2$ there exists a point $\mu:W(X)\to H_1$ such that $\LKer(\nu)=\LKer(\mu)$.
\end{Def}

The meaning of  Definition \ref{def:iso} is the following. Two algebras are isotypic if the sets of realizable types over $H_1$ and $H_2$ coincide. So, by some abuse of language these algebras have the same logic of types. Some references for the notion of isotypic algebras are contained in \cite{Plotkin_M}, \cite{Plotkin_Gagta}, \cite{PlAlPl}, \cite{PlPl}, \cite{PZ}, \cite{Zhitom_types}. Note that the notion was introduced in \cite{PZ}, \cite{Plotkin_M} while \cite{PlPl} gives the most updated survey.

The main theorem is as follows, see \cite{Zhitom_types}.
\begin{thm}\label{thm:lgiso} 
Algebras $H_1$ and $H_2$ are $LG$-equivalent if and only if they are LG-isotypic.
\end{thm}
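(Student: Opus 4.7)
The strategy is to reformulate both conditions as assertions about the family of realized types, namely the collection $\Omega_i(X) = \{\LKer(\mu) : \mu \in \Hom(W(X), H_i)\}$ for $i=1,2$. Isotypicity is by definition the equality $\Omega_1(X) = \Omega_2(X)$ for every $X \in \Gamma$, and I will show that $LG$-equivalence reduces to the same statement. The key identity, obtained by unfolding $T^L_H$ and then $A^L_H$, is
\[
T^{LL}_H \;=\; \bigcap \{\, U \in \Omega(X) : T \subseteq U \,\},
\]
where by convention an empty intersection equals the whole $\Phi(X)$. I will also use that each $\LKer(\mu)$ is a proper ultrafilter in the Boolean part of $\Phi(X)$, since for every first-order formula $\phi$ exactly one of $\phi,\neg\phi$ is valid at $\mu$.

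For the direction \emph{$LG$-isotypic $\Rightarrow$ $LG$-equivalent}, I apply the identity above: the two families of ultrafilters over which $T^{LL}_{H_1}$ and $T^{LL}_{H_2}$ are intersected coincide by assumption, so the two closures agree for every $T$ and every $X$. This direction is essentially immediate.

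For the converse, assume $LG$-equivalence and fix $\mu : W(X) \to H_1$. Set $T = \LKer(\mu)$. From $\mu \in T^L_{H_1}$ I obtain $T^{LL}_{H_1} \subseteq \LKer(\mu) = T$, and combined with the automatic inclusion $T \subseteq T^{LL}_{H_1}$ this yields $T^{LL}_{H_1} = T$. By the $LG$-equivalence hypothesis, $T^{LL}_{H_2} = T$ as well. Since $T$ is a proper ultrafilter we have $T \neq \Phi(X)$, so $T^L_{H_2}$ must be nonempty --- otherwise the closure identity would force $T^{LL}_{H_2} = \Phi(X)$. Any $\nu \in T^L_{H_2}$ then satisfies $T \subseteq \LKer(\nu)$, and since both sides are ultrafilters this inclusion is an equality, so $\LKer(\nu) = T = \LKer(\mu)$. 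Swapping the roles of $H_1$ and $H_2$ completes the argument. The only step with genuine content is the nonemptiness of $T^L_{H_2}$: this is precisely where the proper-ultrafilter property of $\LKer(\mu)$ is needed to separate $T$ from the trivial filter produced by an empty intersection; everything else is a direct bookkeeping translation via the closure identity.
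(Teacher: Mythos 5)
Your argument is correct and complete. The paper itself does not supply a proof of Theorem~\ref{thm:lgiso}; it simply cites \cite{Zhitom_types}, so there is no in-paper argument to compare yours against. That said, your proof is the natural one: the closure identity $T^{LL}_H=\bigcap\{\,U\in\Omega(X):T\subseteq U\,\}$ (with empty intersection $\Phi(X)$) follows directly from the definitions of $T^L_H$ and $A^L_H$, it makes the implication from isotypicity to $LG$-equivalence a one-line check, and the converse is handled cleanly by taking $T=\LKer(\mu)$, observing $T^{LL}_{H_1}=T$, transferring via $LG$-equivalence, and then using that $\LKer(\mu)$ is a proper ultrafilter both to exclude $T^L_{H_2}=\emptyset$ and to upgrade the inclusion $T\subseteq\LKer(\nu)$ to an equality by maximality. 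The only property of the logical kernel you rely on --- that it is always a proper ultrafilter in $\Phi(X)$ --- is exactly what the paper asserts in Section~\ref{synsem}, so nothing is used beyond what the surrounding text provides.
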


Now we are in a position to study   rigidity of algebras with respect to isotypicity property. It is clear, that since isotypicity is stronger than elementary equivalence, this phenomenon can occur quite often. Let us state this problem explicitly.

 \begin{prm}
Let a class of algebras $\mathcal C$ and an algebra $H\in \mathcal C$ be given. Suppose that algebras $H\in \mathcal C$ and $A\in \mathcal C$ are isotypic. Are they  {\it isotypically rigid}, that is are $H$ and $A$  isomorphic?
 \end{prm}

\begin{rem}
In many papers from the list above isotypically rigid algebras are called {\it logically separable} \cite{PlPl}, \cite{Plotkin_Gagta}, or {\it type definable} \cite{MRoman}. 
\end{rem}

 \begin{thm}\label{isor}
 We will consider the following cases of rigidity:

 \begin{itemize}
 \item Finitely generated free abelian groups are isotypically rigid in the class of all groups, see \cite{Zhitom_types}.
 \item Finitely generated  free nilpotent groups of  class at most $n$  are isotypically rigid in the class of all groups \cite{Zhitom_types}.
    \item  Finitely generated   metabelian  groups   are isotypically rigid in the class of all groups \cite{MRoman}.
 \item Finitely generated  virtually polycyclic groups are isotypically rigid in the class of all groups \cite{MRoman}.
   \item  Finitely generated  free solvable   groups of derived length $d > 1$    are isotypically rigid in the class of all groups \cite{MRoman}.
       \item All surface groups, which are not  non-orientable surface
groups of genus 1,2 or 3   are isotypically rigid in the class of all groups \cite{MRoman}.
  \item Finitely generated absolutely free groups  are isotypically rigid in the class of all groups,  see \cite{Sklinos_1} based on \cite{Pillay} (also follows from \cite{PerinSklinos}, \cite{Houcine}, \cite{Zhitom_types_1}).
      \item Finitely generated free  semigroups are isotypically rigid in the class of semigroups, see \cite{Zhitom_types}.
       \item Finitely generated free  inverse semigroups are isotypically rigid in the class of inverse semigroups, see \cite{Zhitom_types}.
      \item{} Finitely generated free associative algebras are isotypically rigid in the class of such algebras \cite{Zhitom_types_1}.
 \end{itemize}
  \end{thm}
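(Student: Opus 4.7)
The unifying strategy across all items of Theorem \ref{isor} is to extract an isomorphism from a single realized type. Let $H$ denote the algebra from the stated class and suppose $A$ is isotypic to $H$. Fix a finite generating tuple $\bar g=(g_1,\dots,g_n)$ of $H$ and set $p(\bar x):=\LKer(\mu)$, where $\mu:W(X)\to H$ sends $x_i\mapsto g_i$. By the isotypicity assumption (Definition~\ref{def:iso}) there is a point $\nu:W(X)\to A$ with $\LKer(\nu)=p(\bar x)$; write $\bar h:=(\nu(x_1),\dots,\nu(x_n))$. Because every atomic equation $w\equiv w'$ valid at $\bar g$ lies in $p$, and because negations of such equations also lie in $p$, the assignment $g_i\mapsto h_i$ extends to an \emph{injective} homomorphism $\iota:H\hookrightarrow A$ whose image is the subalgebra $\langle \bar h\rangle$ of $A$.

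The core of the proof is then the surjectivity of $\iota$, i.e.\ showing $A=\langle \bar h\rangle$. The plan is to encode ``$\bar x$ generates the ambient algebra'' into a family of first-order formulas that belongs to $p(\bar x)$. This step varies with the class of $H$: for finitely generated abelian, nilpotent-of-bounded-class, metabelian, virtually polycyclic, and free solvable groups, one uses the Mal'cev-type coordinatisation available in each of these classes (pp-definability of the centre/derived series, definability of the Fitting subgroup, rationality of quotients) together with the fact that the rank / Hirsch length / torsion structure is read off from $p$; for finitely generated free semigroups and free inverse semigroups one uses the analogous positive-primitive description of the generating set; and for absolutely free groups and surface groups one invokes the Perin--Sklinos homogeneity theorem (with Pillay's stability-theoretic analysis of $T_{fg}$), which says that in the free group any two tuples with the same type are conjugate under an automorphism --- this feeds into a prime-model argument to conclude that $\bar h$ must enjoy the same universal property as $\bar g$.

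Concretely, the key steps I would carry out for each class are: (i) exhibit a formula (or schema of formulas) $\psi(\bar x)\in p(\bar x)$ which forces any realisation $\bar h$ in $A$ to be a generating tuple of some ``copy'' of $H$; (ii) show that any element of $A$ is parametrically definable from $\bar h$ by formulas already recorded in $p$, so that the complement $A\smallsetminus \langle \bar h\rangle$, if non-empty, would realise a type over $\bar h$ that is not realised over $\bar g$ in $H$, contradicting the symmetry of isotypicity applied to extended tuples $(\bar g, g)$ for $g\in H$; (iii) combine (i) and (ii) to conclude $A=\langle\bar h\rangle\cong H$ via $\iota$.

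The main obstacle is step (ii): passing from ``$\bar h$ generates a copy of $H$ inside $A$'' to ``$\bar h$ generates all of $A$.'' Finite generation of the target is not available a priori, and ``being generated by $\bar x$'' is in general not first-order expressible. The way around, case by case, is to exploit the \emph{symmetry} in Definition~\ref{def:iso}: for every element $a\in A$ the type of $(\bar h,a)$ must be realised in $H$, so $a$ corresponds to an element of $H$; combined with step (i), this forces the correspondence to be $\iota^{-1}$. For absolutely free and surface groups the replacement of ``generation'' by the homogeneity and stability of $T_{fg}$ is the delicate part and is precisely where the proofs of \cite{PerinSklinos,Sklinos_1,Pillay,Houcine} do the hard model-theoretic work; for the soluble/nilpotent/polycyclic cases it is the identification of a type-definable ``coordinate system'' inside $A$ in the sense of \cite{MRoman} that carries the argument through.
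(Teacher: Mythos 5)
This is a survey theorem: the paper itself supplies no proof, only pointers to the original sources \cite{Zhitom_types}, \cite{MRoman}, \cite{Sklinos_1}, \cite{Pillay}, \cite{PerinSklinos}, \cite{Houcine}, \cite{Zhitom_types_1}, so there is no ``paper's proof'' to compare against. Your outline is a reasonable high-level description of the strategy those papers actually follow: pick a generating tuple $\bar g$ of $H$, realize its type at $\bar h$ in $A$, observe that the atomic part of the type forces the assignment $g_i\mapsto h_i$ to define an injective homomorphism $\iota\colon H\to A$, and then fight for surjectivity. The embedding step is correct exactly as you state it (and holds for any finite generating tuple, not only a free one, since both the relations and their negations live in $\LKer(\mu)=\LKer(\nu)$).

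The genuine gap is in your resolution of step~(ii). You argue that for any $a\in A$ the type of $(\bar h,a)$ is realized in $H$, ``so $a$ corresponds to an element of $H$.'' But Definition~\ref{def:iso} only guarantees that the type of $(\bar h,a)$ is realized at \emph{some} tuple $(\bar g',g')$ in $H$; it does not guarantee $\bar g'=\bar g$, and without that you cannot conclude $a\in\langle\bar h\rangle=\iota(H)$. Closing this requires precisely the nontrivial content of the cited works: either a homogeneity statement (two tuples of the same type are conjugate by an automorphism, as Perin--Sklinos prove for free and most surface groups), or a type-definable coordinatization of the ambient algebra in terms of $\bar h$ (as Myasnikov--Romanovskii construct for metabelian, virtually polycyclic, and free solvable groups, and Zhitomirski for the free abelian/nilpotent, semigroup, and associative-algebra cases). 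Your sketch names these ingredients but does not supply them, so as an independent argument it stops exactly where the real work begins; as a description of the route taken in the literature behind this survey theorem, it is accurate.
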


  The number of examples can be continued to co-Hopf groups, some Burnside groups, etc.

In fact, using either logical equivalence of algebras, or what is the same, the isotypicity of algebras, we compare the possibilities of individual points in the affine space  to define the sets of formulas (in fact ultrafilters in $\Phi(X)$) which are valid in these points. Given a point $\mu$ in the affine space,  the collection of formulas valid on the point $\mu$ is {\it a type} of $\mu$. If these individual types are,  roughly speaking, the same for both algebras,  then these algebras are declared isotypic. Thus, for isotypic algebras  we compare  types of formulas realizable on these  algebras. Of course, this is significantly stronger than elementary equivalence, where the individuality of points disappeared and we compare only formulas valid in all points of the affine space.

The following principal problem was stated in \cite{PlPl} and is widely open.

\begin{prm}[Rigidity problem]
 Is it true that every two isotypic finitely generated groups are
isomorphic?
\end{prm}

We will finish with the one more tempting problem of the same spirit.

\begin{prm}\label{fields}
What are the isotypicity classes of fields? When  two isotypic fields are isomorphic?
\end{prm}

The elementary equivalence of fields was one of motivating engines for Tarski to develop the whole model-theoretic staff related to elementary equivalence. Problem \ref{fields}, in a sense,  takes us back to the origins of the theory.

\subsection{Plotkin's problem: automorphisms of endomorphism semigroups and groups of polynomial automorphisms}
\label{plot-prm}

In the light of B.I. Plotkin's activity on  creation of algebraic geometry over algebraic systems, he drew a special attention to   studying the groups of their automorphisms, see \cite{Pl}. Later on he emphasized that automorphisms of categories of free algebras of the varieties play here a role of exceptional importance. This role was underlined in Proposition \ref{prop:eq} of Section \ref{Pl}.
The meaning  of Reduction Theorem (see \cite{Pl-St}, \cite{KLP}, \cite{MPP2}, \cite{PZ1}) was explained just after this proposition.  Reduction Theorem  reduces investigation of automorphisms of the whole category $\Theta^0$ of free in the variety $\Theta$ algebras  to studying the  group $\Aut(\End(W(X)))$  associated with a single object $W(X)$ in $\Theta^0$. Here, $W(X)$ is a finitely generated free in $\Theta$  algebra. In fact, if all automorphisms of the endomorphism semigroup of a free algebra $W(X)$ are close to being inner, then all automorphisms of $\Theta^0$ possess the same property.

  This philosophy forms a clear basis for  investigation of automorphisms of the semigroup of polynomial endomorphisms and the group of polynomial automorphisms.  The automorphisms of the endomorphism semigroup of a free associative algebra $A$ were given by Belov, Berzins and Lipyanski, (see  \cite{BDL} for details and definitions of semi-inner and mirror automorphisms):

\begin{thm}
The group ${\Aut}({\End}(A))$ is generated by semi-inner and mirror automorphisms of ${\End}(A)$.
Correspondingly, the group of automorphisms of the category of free associative algebras
is generated
by semi-inner and mirror automorphisms of this category.
\end{thm}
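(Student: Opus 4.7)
The plan is to analyze an arbitrary $\Phi\in\Aut(\End(A))$ by extracting, purely from the semigroup structure of $\End(A)$, enough intrinsic data to recognize $\Phi$ as a product of semi-inner and mirror components. The strategy follows the scheme developed by Plotkin--Zhitomirski and adapted to the associative setting in \cite{b19} and \cite{b3}.

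First I would isolate, inside $\End(A)$, a distinguished system of marked endomorphisms definable in terms of the multiplication of $\End(A)$ alone. The natural candidates are the constant endomorphisms $c_\alpha$, which send every generator $x_i$ to a scalar $\alpha\in K\cdot 1_A$, and the coordinate projections $e_i$, which send $x_i\mapsto x_i$ and $x_j\mapsto 0$ for $j\neq i$. Both families are characterized semigroup-theoretically: the $c_\alpha$ are the left zeros of the monoid of unital endomorphisms, while the $e_i$ are determined by an intrinsic classification of primitive idempotents via the lattice of principal ideals of $\End(A)$. The key preparatory claim is that $\Phi$ must preserve each of these families setwise.

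Next, the induced action of $\Phi$ on $\{c_\alpha:\alpha\in K\}$ yields a bijection $\sigma\colon K\to K$, and by testing against the scalar evaluation endomorphisms one checks that $\sigma$ is either a field automorphism or an anti-automorphism; in the associative (commutative-ground-field) case this is already a field automorphism. The induced action on the $e_i$, together with the action of $\Phi$ on the semigroup generated by the $e_i$ and the ``variable permutation'' endomorphisms, produces a bijective semi-linear or anti-semi-linear map $s\colon A\to A$. Composing $\Phi$ with the semi-inner automorphism defined by conjugation with $s$ twisted by $\sigma$, and, when the twist is of anti-type, with the mirror automorphism of $\End(A)$ induced by the canonical anti-isomorphism $A\to A^{\mathrm{op}}$, I may assume that the resulting $\Phi'$ fixes each $e_i$, each $c_\alpha$, and each generator-like endomorphism. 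A direct check, using that any $f\in\End(A)$ is determined by the endomorphisms $e_i f$ and their actions on scalar multiples of the generators, then forces $\Phi'=\mathrm{id}$. This establishes the first assertion.

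For the second assertion, the Reduction Theorem of \cite{Pl-St}, \cite{KLP}, \cite{MPP2}, \cite{PZ1} transports every automorphism of the category $\mathcal{A}^\circ$ of free associative algebras to an element of $\Aut(\End(W(X)))$ in a manner compatible with both semi-inner and mirror types, so the generation statement for $\Aut(\mathcal{A}^\circ)$ follows. The main obstacle I expect is the intrinsic recognition step: characterizing the projections $e_i$ and the constants $c_\alpha$ using only the multiplication in $\End(A)$, and simultaneously recovering the field $K$ together with its automorphism group from that semigroup structure. Once this characterization is made rigorous, the rest of the argument is essentially a bookkeeping of how $\Phi$ acts on the recovered data.
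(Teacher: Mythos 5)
The paper is a survey: it states this theorem and refers to \cite{BDL} (Kanel-Belov, Berzins, Lipyanski) for the proof, providing no proof of its own. So there is nothing in the paper to compare your sketch against directly; I will assess it on its merits against the standard methodology in the references it gestures at.

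Your high-level shape is right: recognize distinguished endomorphisms intrinsically from the semigroup structure of $\End(A)$, track $\Phi$'s action on them, then compose with a semi-inner (and possibly mirror) automorphism to trivialize $\Phi$, and pass to $\Aut(\mathcal{A}^\circ)$ via the Reduction Theorem. That is indeed the methodology of the Plotkin school, and your handling of the second assertion via the Reduction Theorem is fine. But the proposal has genuine gaps precisely at the technical steps you defer.

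First, the left zeros of the monoid of unital endomorphisms consist of \emph{all} endomorphisms $x_i \mapsto \alpha_i \in K\cdot 1_A$, a set parameterized by $K^n$, not only the diagonal $c_\alpha$ with equal coordinates. Isolating the diagonal family, and, more importantly, reconstructing the \emph{ring} operations of $K$ from semigroup data so that the induced bijection $\sigma\colon K\to K$ is shown to be a field automorphism rather than a mere bijection, is a real missing lemma; ``testing against the scalar evaluation endomorphisms'' does not explain it. Second, the characterization of the coordinate projections $e_i$ as ``primitive idempotents via the lattice of principal ideals'' is asserted but not in a form one can verify. The idempotent structure of $\End(K\langle x_1,\ldots,x_n\rangle)$ is large (for example, $x_1\mapsto x_1$, $x_j\mapsto$ any fixed element of $K\langle x_1\rangle$ gives idempotents with one-generated image), and you would need a precise Green's-relations invariant plus a proof that the $\Phi$-image of an $e_i$ is again a coordinate projection up to conjugation. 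Third, and most importantly, the mirror automorphism does not appear in your construction for any stated reason. The reason it must appear is that the multiplicative structure of $A$, when recovered from the semigroup $\End(A)$, is only defined up to $A\cong A^{\mathrm{op}}$; this $A$-versus-$A^{\mathrm{op}}$ ambiguity is exactly where the non-commutativity of the free associative algebra matters, and it is the crux of the theorem rather than an afterthought. Your phrase ``when the twist is of anti-type'' names the phenomenon without identifying where it arises in the construction of $s$. Finally, ``a direct check\ldots forces $\Phi'=\mathrm{id}$'' is not a check: that rigidity statement is the main argument and needs its own proof.
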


In the same spirit, the description of an endomorphism semigroup of the ring of commutative polynomials $A$ is given by Belov and Lipyanski in \cite{BL}:

 \begin{thm}
 Every automorphism of the group ${\Aut}({\End}(A))$ is
 semi-inner.
  \end{thm}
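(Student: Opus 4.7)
The plan is to follow the Plotkin--reduction philosophy one level higher than for the semigroup $\End(A)$ itself. Write $A = K[x_1,\dots,x_n]$, let $\Gamma\Aut(A) := \Aut_K(A) \rtimes \Aut(K)$ be the group of semi-linear ring automorphisms of $A$, and for $s \in \Gamma\Aut(A)$ let $\bar s \in \Aut(\End(A))$ denote the semi-inner automorphism $\alpha \mapsto s\alpha s^{-1}$. Given an arbitrary $\Phi \in \Aut(\Aut(\End(A)))$, the goal is to produce an $s \in \Gamma\Aut(A)$ with $\Phi(\alpha) = \bar s\,\alpha\,\bar s^{-1}$ for every $\alpha \in \Aut(\End(A))$.

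First I would single out the subgroup of inner automorphisms of the semigroup $\End(A)$, namely $\operatorname{Inn}(\End(A)) \subset \Aut(\End(A))$, which is isomorphic to $\Aut_K(A)$ (the only units of $\End(A)$ are the $K$-algebra automorphisms of $A$, and the monoid center of $\End(A)$ is trivial). The crucial claim is that $\operatorname{Inn}(\End(A))$ is a \emph{characteristic} subgroup of $\Aut(\End(A))$. I would establish this by giving a purely group-theoretic characterization separating the geometric (inner) part from the arithmetic (Galois) part -- for instance, as the subgroup generated by the divisible connected one-parameter subgroups associated with affine translations $x_i \mapsto x_i + c$ and scalar dilations $x_i \mapsto \lambda x_i$, which lie inside $\operatorname{Inn}(\End(A))$ but cannot be produced by nontrivial elements of the $\Aut(K)$-factor.

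Next, the restriction $\Phi_0 := \Phi|_{\operatorname{Inn}(\End(A))}$ becomes an element of $\Aut(\Aut_K(A))$. Here I invoke the existing description of $\Aut(\Aut_K(A))$ for the commutative polynomial algebra, in the spirit of Kraft--Stampfli and its extensions (see \cite{b19} for the associative analogue): every such $\Phi_0$ is induced by conjugation by a semi-linear automorphism, $\Phi_0(\phi) = s\phi s^{-1}$ for some $s \in \Gamma\Aut(A)$. This furnishes the required candidate $s$. To finish, I would show that the corrected automorphism $\Psi := (\operatorname{Ad} \bar s)^{-1} \circ \Phi$ is the identity on all of $\Aut(\End(A))$. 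By construction $\Psi$ is trivial on the characteristic subgroup $\operatorname{Inn}(\End(A)) \cong \Aut_K(A)$. Using the Belov--Lipyanski description of $\Aut(\End(A))$ itself -- which in the commutative case admits no mirror component and decomposes as $\operatorname{Inn}(\End(A)) \rtimes \Aut(K)$ -- it suffices to show that $\Psi$ acts trivially on the complementary $\Aut(K)$-factor. Any automorphism of $\Aut(K)$ produced by $\Psi$ must intertwine with the (already trivialised) action on $\operatorname{Inn}(\End(A))$, and since distinct field automorphisms of $K$ induce distinct outer automorphisms on $\Aut_K(A)$, this forces $\Psi|_{\Aut(K)} = \mathrm{id}$.

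The main obstacle will be the characteristic-subgroup claim in the first step: producing an intrinsic group-theoretic invariant that isolates $\operatorname{Inn}(\End(A))$ inside $\Aut(\End(A))$ without reference to the underlying algebra $A$. One has to be careful that an arithmetic element coming from a field automorphism with short orbit structure does not mimic conjugation by a polynomial automorphism. A secondary difficulty, particularly for $n \geq 3$, is that $\Aut_K(A)$ contains wild automorphisms after Shestakov--Umirbaev, so the description of $\Aut(\Aut_K(A))$ invoked in the second step must be robust enough to treat the full automorphism group and not merely its tame subgroup.
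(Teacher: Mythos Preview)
You have misread the statement. In context --- the citation of Belov--Lipyanski \cite{BL} and the parallel with the preceding theorem on free associative algebras --- the claim is that every \emph{element} of the group $\Aut(\End(A))$, i.e.\ every automorphism of the \emph{semigroup} $\End(A)$, is semi-inner. The phrase ``automorphism of the group $\Aut(\End(A))$'' is to be parsed as ``automorphism [belonging to] the group $\Aut(\End(A))$'', not as an element of $\Aut(\Aut(\End(A)))$. Your entire plan is pitched one categorical level too high: you are attempting to describe $\Aut(\Aut(\End(A)))$, which is not what is being asserted.

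The actual argument (in \cite{BL}, which the survey is citing rather than reproving) runs along quite different lines. One singles out inside the monoid $\End(A)$ the set of constant endomorphisms, which are intrinsically characterised as the left zeros of the semigroup; any $\Phi\in\Aut(\End(A))$ must permute them. This gives a bijection of the affine space $\mathbb{A}^n_K$ onto itself, and one then shows --- by examining how $\Phi$ interacts with rank-one endomorphisms and with the subsemigroup of translations/dilations --- that this bijection is a semi-linear polynomial automorphism, from which the pair $(\gamma,g)$ with $\gamma\in\Aut(K)$ and $g\in\Aut_K(A)$ is extracted. None of your proposed steps (the characteristic-subgroup claim for $\mathrm{Inn}(\End(A))$ inside $\Aut(\End(A))$, the appeal to a description of $\Aut(\Aut_K(A))$, the semidirect splitting argument) bears on this.
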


The automorphisms of the group of polynomial automorphisms on free associative algebras and commutative algebras at the level of {\it Ind}-schemes were obtained by Belov, Elishev and J.-T.Yu in  \cite{BEJ}. Let $K[x_1,\dots,x_n]$ and $K\langle x_1,\dots, x_n\rangle$ be  the free commutative polynomial algebra and the free associative algebra with $n$ generators, respectively. Denote by $\NAut$ the group of {\it nice} automorphisms, i.e., the group of  automorphisms which can
be approximated by tame ones. One can prove that  in characteristic zero case every
automorphism is nice.

 \begin{thm}
Any $\Ind$-scheme automorphism $\varphi$ of
$\NAut(K[x_1,\dots,x_n])$ for $n\ge 3$ is inner, i.e., it is a
conjugation via some automorphism of $K[x_1,\dots,x_n]$. Any $\Ind$-scheme automorphism $\varphi$ of
$\NAut(K\langle
x_1,\dots,x_n\rangle)$ for $n\ge 3$ is semi-inner (see \cite{BEJ} for the precise definition).
\end{thm}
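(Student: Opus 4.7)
The plan is to reduce the study of an $\Ind$-scheme automorphism $\varphi$ of the nice automorphism group to its action on a well-understood finite-dimensional subgroup, exploit rigidity results for algebraic groups to normalize $\varphi$, and then propagate the normalization outward using the $\Ind$-topology and generation by tame automorphisms.

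First I would analyze the action of $\varphi$ on the affine subgroup $\mathrm{Aff}_n \subset \NAut$, consisting of invertible affine transformations of the generators. This is a connected algebraic group containing a maximal torus $T$ (the diagonal automorphisms) and a copy of $\mathrm{GL}_n$. Since $\varphi$ is an $\Ind$-scheme morphism, it sends finite-dimensional algebraic subgroups to finite-dimensional algebraic subgroups, and in particular restricts to an automorphism of $\mathrm{Aff}_n$ (or to a subgroup isotopic to it after composition with a conjugation). Using the classification of algebraic group automorphisms of $\mathrm{GL}_n$ for $n\geq 3$, one knows that every such automorphism is, up to conjugation, either the identity or the contragredient/transpose inverse; this is where the hypothesis $n\geq 3$ enters decisively, since for $n=2$ the transpose-inverse is itself inner. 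Composing $\varphi$ with an appropriate inner automorphism (in the commutative case) or with an inner automorphism possibly twisted by the mirror (in the associative case), I would assume from this point on that $\varphi$ fixes $T$ pointwise and acts on $\mathrm{GL}_n$ either trivially or by the relevant involution.

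Second, having normalized $\varphi$ on the affine part, I would study its action on the elementary (triangular) automorphisms, which together with $\mathrm{Aff}_n$ generate the tame subgroup. Each one-parameter subgroup of elementary automorphisms, say $x_i \mapsto x_i + t\, f(x_{i+1},\dots,x_n)$ with $t\in K$, is a weight subgroup for the torus action; since $\varphi$ fixes $T$, it must send such weight subgroups to weight subgroups of matching weight. The constraints imposed by commutation relations with $T$ and with the affine part pin down the induced map on each elementary one-parameter subgroup up to a scalar (in the commutative case, this forces the action to be identity on all of them, modulo a further inner adjustment; in the associative case it may additionally interact with the mirror involution, yielding the ``semi-inner'' conclusion). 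This is the step where I expect the main technical obstacle: carefully tracking weights and scaling factors across infinitely many elementary subgroups so that the collection of local inner adjustments can be realized by a single global conjugation, rather than producing incompatible local data. Handling this coherence is genuinely delicate and is usually the heart of such arguments.

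Third, once $\varphi$ is shown to coincide with an inner (respectively semi-inner) automorphism on the whole tame subgroup $\mathrm{TAut}$, I would invoke the definition of nice automorphisms as those approximable by tame ones, together with the fact that $\varphi$ is continuous in the $\Ind$-scheme topology. Two $\Ind$-scheme morphisms that agree on a subgroup whose closure is the whole ambient group must coincide, and conjugation by a fixed automorphism is clearly $\Ind$-continuous; so the equality of $\varphi$ with the conjugation (or semi-conjugation) extends from $\mathrm{TAut}$ to all of $\NAut$. This closes the argument in both the commutative case (where no mirror is available, yielding a genuine inner automorphism) and the associative case (where the mirror contributes the extra ``semi'' factor). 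Throughout I would cite the earlier Reduction Theorem philosophy from Section \ref{Pl}, which justifies why controlling $\Aut(\End(W(X)))$ and hence $\Aut(\mathrm{NAut}(W(X)))$ is the right single-object problem to solve.
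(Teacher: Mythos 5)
The paper does not prove this theorem: it is stated as a citation of Kanel-Belov, Elishev and Yu \cite{BEJ}, so there is no internal proof to compare against. Judged against what is actually done in \cite{BEJ}, your roadmap --- normalize $\varphi$ on the affine/linear subgroup using rigidity of $\mathrm{GL}_n$, then propagate to elementary one-parameter subgroups via the torus weight decomposition, then extend to $\NAut$ by density of tame automorphisms in the Ind/augmentation topology --- is the right high-level skeleton and is genuinely aligned with the published argument.

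However, as a proof the proposal has three real gaps, and you have correctly sensed where the second one lies. First, the claim that an $\Ind$-scheme automorphism ``restricts to an automorphism of $\mathrm{Aff}_n$'' is not automatic: an $\Ind$-morphism sends algebraic families to algebraic families, but it takes a nontrivial argument (conjugacy of maximal tori inside $\Aut^{\leq N}$, analysis of stabilizers and their normalizers) to show that the image of $\mathrm{Aff}_n$ is again (conjugate to) $\mathrm{Aff}_n$, rather than some other finite-dimensional subgroup, and to justify composing with a single inner automorphism to fix it. Second, the coherence problem you flag --- showing that the scalars appearing on infinitely many elementary weight subgroups are simultaneously realizable by one global conjugation, rather than producing incompatible local data --- is precisely the technical heart of \cite{BEJ}, and your proposal only names the obstacle without overcoming it; the published proof resolves it via a careful bookkeeping of the torus weights and commutation relations among elementary subgroups, and this is where most of the length of the real argument sits. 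Third, the closing density step needs more than ``two Ind-morphisms agreeing on a dense subgroup coincide'': $\NAut$ is defined via approximation in the formal power series (augmentation) topology, which is not the same as Zariski density inside each $\Aut^{\leq N}$, and one must verify that $\varphi$ is actually continuous in that topology and that conjugation is too. Finally, your explanation for the hypothesis $n\geq 3$ (that transpose-inverse becomes inner for $n=2$) is not the operative reason; the constraint enters through the structure of the tame group and the torus normalizer analysis, not through the outer automorphism group of $\mathrm{GL}_2$. In short: a sound sketch with the correct anchors, but each of the three pivotal steps is stated as an aspiration rather than carried out.
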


\noindent
Here, the $\Ind$-scheme is defined as follows:

\begin{Def}
An {\it $\Ind$-variety} $M$ is the direct limit of algebraic varieties
$M=\varinjlim \lbrace M_1\subseteq M_2\cdots\rbrace.$
 An {\it $\Ind$-scheme} is an
$\Ind$-variety which is a group such that the group inversion is
a morphism $M_i\rightarrow M_{j(i)}$ of algebraic varieties, and the group multiplication induces
a morphism from $M_i\times M_j$ to $M_{k(i,j)}$. A map $\varphi$ is a {\it
morphism} of an $\Ind$-variety $M$ to an $\Ind$-variety $N$, if
$\varphi(M_i)\subseteq N_{j(i)}$ and the restriction $\varphi$ to
$M_i$ is a morphism for all $i$. Monomorphisms, epimorphisms and
isomorphisms are defined similarly in a natural way.
\end{Def}

\subsection{On the independence of the B-KK isomorphism of infinite prime and Plotkin conjecture for symplectomorphisms}
\label{indep}

\subsubsection{\bf Plotkin's problem for symplectomorphism and the Kontsevich conjecture}\label{plk}

Observe that the study of automorphisms of the group of polynomial symplectomorphisms, as well as automorphisms of the Weyl algebra (Plotkin's problem) is extremely important in course of the Kontsevich conjecture, as well as the Jacobian conjecture.

\subsubsection{\bf Ultrafilters and infinite primes}\label{uf}
Let $\mathcal{U}\subset 2^{\mathbb{N}}$ be an arbitrary non-principal ultrafilter on the set of all positive numbers (in this note $\mathbb{N}$ will almost always be regarded as the index set). Let $\mathbb{P}$ be the set of all prime numbers, and let $\mathbb{P}^{\mathbb{N}}$ denote the set of all sequences $p=(p_m)_{m\in\mathbb{N}}$ of prime numbers. We refer to a generic set $A\in \mathcal{U}$ as an index subset in situations involving the restriction $p_{|A}:A\rightarrow \mathbb{P}$. We will call a sequence $p$ of prime numbers $\mathcal{U}$-stationary if there is an index subset $A\in\mathcal{U}$ such that its image $p(A)$ consists of one point.

A sequence $p:\mathbb{N}\rightarrow\mathbb{P}$ is bounded if the image $p(\mathbb{N})$ is a finite set. Thanks to the ultrafilter finite intersection property, bounded sequences are necessarily $\mathcal{U}$-stationary.


Any non-principal ultrafilter $\mathcal{U}$ generates a congruence
\begin{equation*}
\sim_{\mathcal{U}}\subseteq \mathbb{P}^{\mathbb{N}}\times\mathbb{P}^{\mathbb{N}}
\end{equation*}
in the following way. Two sequences $p^1$ and $p^2$ are $\mathcal{U}$-congruent iff there is an index subset $A\in\mathcal{U}$ such that for all $m\in A$ the following equality holds:
\begin{equation*}
p^1_m=p^2_m.
\end{equation*}
The corresponding quotient
\begin{equation*}
{}^*\mathbb{P}\equiv \mathbb{P}^{\mathbb{N}}/\sim_{\mathcal{U}}
\end{equation*}
contains as a proper subset the set of all primes $\mathbb{P}$ (naturally identified with classes of $\mathcal{U}$-stationary sequences), as well as classes of unbounded sequences. The latter are referred to as nonstandard, or infinitely large, primes. We will use both names and normally denote such elements by $[p]$, mirroring the convention for equivalence classes. The terminology is justified, as the set of nonstandard primes is in one-to-one correspondence with the set of prime elements in the ring ${}^*\mathbb{Z}$ of nonstandard integers in the sense of Robinson \cite{R}.


Indeed, one may utilize the following construction, which was thoroughly studied\footnote{also cf. \cite{C}} in \cite{LLS}. Consider the ring $\mathbb{Z}^{\omega}=\prod_{m\in\mathbb{N}}\mathbb{Z}$ - the product of countably many copies of $\mathbb{Z}$ indexed by $\mathbb{N}$. The minimal prime ideals of $\mathbb{Z}^{\omega}$ are in bijection with the set of all ultrafilters on $\mathbb{N}$ (perhaps it is opportune to remind that the latter is precisely the Stone-Cech compactification $\beta\mathbb{N}$ of $\mathbb{N}$ as a discrete space). Explicitly, if for every $a=(a_m)\in\mathbb{Z}^{\omega}$ one defines the support complement as
\begin{equation*}
\theta(a)=\lbrace m\in\mathbb{N}\;|\;a_m=0\rbrace
\end{equation*}
and for an arbitrary ultrafilter $\mathcal{U}\in 2^{\mathbb{N}}$ sets
\begin{equation*}
(\mathcal{U})=\lbrace a\in\mathbb{Z}^{\omega}\;|\;\theta(a)\in\mathcal{U}\rbrace,
\end{equation*}
then one obtains a minimal prime ideal of $\mathbb{Z}^{\omega}$. It is easily shown that every minimal prime ideal is of such a form. Of course, the index set $\mathbb{N}$ may be replaced by any set $I$, after which one easily gets the description of minimal primes of $\mathbb{Z}^I$ (since those correspond to ultrafilters, there are exactly $2^{2^{|I|}}$ of them if $I$ is infinite and $|I|$ when $I$ is a finite set).
Note that in the case of finite index set all ultrafilters are principal, and the corresponding $(\mathcal{U})$ are of the form $\mathbb{Z}\times\cdots\times (0)\times\cdots\times\mathbb{Z}$ - a textbook example.

Similarly, one may replace each copy of $\mathbb{Z}$ by an arbitrary integral domain and repeat the construction above. If for instance all the rings in the product happen to be fields, then, since the product of any number of fields is von Neumann regular, the ideal $(\mathcal{U})$ will also be maximal.
\smallskip

The ring of nonstandard integers  may be viewed as a quotient (an ultrapower)
\begin{equation*}
\mathbb{Z}^{\omega}/(\mathcal{U})={}^*\mathbb{Z}.
\end{equation*}
The class of $\mathcal{U}$-congruent sequences $[p]$ corresponds to an element (also an equivalence class) in ${}^*\mathbb{Z}$, which may as well as $[p]$ be represented by a prime number sequence $p=(p_m)$, only in the latter case some but not too many of the primes $p_m$ may be replaced by arbitrary integers. For all intents and purposes, this difference is insignificant.

\smallskip
Also, observe that $[p]$ indeed generates a maximal prime ideal in ${}^*\mathbb{Z}$: if one for (any) $p\in [p]$ defines an ideal in $\mathbb{Z}^{\omega}$ as
\begin{equation*}
(p,\; \mathcal{U})=\lbrace a\in\mathbb{Z}^{\omega}\;|\;\lbrace m\;|\; a_m\in p_m\mathbb{Z}\rbrace\in\mathcal{U}\rbrace,
\end{equation*}
then, taking the quotient $\mathbb{Z}^{\omega}/(p,\; \mathcal{U})$ in two different ways, one arrives at an isomorphism
\begin{equation*}
{}^*\mathbb{Z}/([p])\simeq \left(\prod_{m}\mathbb{Z}_{p_m}\right)/(\mathcal{U}),
\end{equation*}
and the right-hand side is a field by the preceding remark.
\smallskip
For a fixed non-principal $\mathcal{U}$ and an infinite prime $[p]$, we will call the quotient
\begin{equation*}
\mathbb{Z}_{[p]}\equiv {}^*\mathbb{Z}/([p])
\end{equation*}
the nonstandard residue field of $[p]$. Under our assumptions this field has characteristic zero.

\medskip

\subsubsection{\bf Algebraic closure of nonstandard residue field}\label{ac}
We have seen that the objects $[p]$ - the infinite prime - behaves similarly to the usual prime number in the sense that a version of a residue field corresponding to this object may be constructed. Note that the standard residue fields are contained as a degenerate case in this construction, namely if we drop the condition of unboundedness and instead consider $\mathcal{U}$-stationary sequences, we will arrive at a residue field isomorphic to $\mathbb{Z}_{p}$, with $p$ being the image of the stationary sequence in the chosen class. The fields of the form $\mathbb{Z}_{[p]}$ are a realization of what is known as pseudofinite field, cf. \cite{BH}.

\smallskip

The nonstandard case is surely more interesting. While the algebraic closure of a standard residue field is countable, the nonstandard one itself has the cardinality of the continuum. Its algebraic closure is also of that cardinality and has characteristic zero, which implies that it is isomorphic to the field of complex numbers. We proceed by demonstrating these facts.
\smallskip

\begin{prop}
For any infinite prime $[p]$ the residue field $\mathbb{Z}_{[p]}$ has the cardinality of the continuum\footnote{There is a general statement on cardinality of ultraproduct due to Frayne, Morel, and Scott \cite{FMS}. We believe the proof of this particular instance may serve as a neat example of what we are dealing with in the present paper.}.
\end{prop}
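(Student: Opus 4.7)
The plan is to establish the two cardinality bounds separately. For the upper bound, I would simply observe that the cardinality of the ambient product $\prod_m \mathbb{Z}_{p_m}$ is at most $\aleph_0^{\aleph_0}=2^{\aleph_0}$, since each factor is finite (hence countable) and the index set is countable. Any quotient, including the ultraproduct $(\prod_m \mathbb{Z}_{p_m})/\mathcal{U}\simeq \mathbb{Z}_{[p]}$, then has cardinality at most $2^{\aleph_0}$.

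The substantive content is the reverse inequality, for which I would construct an explicit injection $\{0,1\}^{\mathbb{N}}\hookrightarrow\mathbb{Z}_{[p]}$. Fix a representative $(p_m)$ of $[p]$. The fact that $[p]$ is infinite is equivalent to saying that for every integer $N$ the index set $A_N=\{m\in\mathbb{N}\mid p_m>N\}$ belongs to $\mathcal{U}$; consequently, if we define $N(m)$ to be the largest integer $k$ with $2^k<p_m$, then $N(m)$ is eventually arbitrarily large along $\mathcal{U}$. For each binary sequence $\sigma=(\sigma_k)_{k\in\mathbb{N}}\in\{0,1\}^{\mathbb{N}}$ I would set
\begin{equation*}
a^\sigma_m=\sum_{k=0}^{N(m)-1}\sigma_k\,2^k\pmod{p_m}\in\mathbb{Z}_{p_m},
\end{equation*}
which is well-defined since the truncated sum lies in $\{0,1,\ldots,2^{N(m)}-1\}\subset\{0,\ldots,p_m-1\}$. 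The induced map $\sigma\mapsto[(a^\sigma_m)]$ sends $\{0,1\}^{\mathbb{N}}$ into the ultraproduct.

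To see injectivity, take distinct $\sigma\neq\tau$ and let $k_0$ be the minimal index of disagreement. Whenever $N(m)>k_0$, the integers $a^\sigma_m$ and $a^\tau_m$ differ in their $k_0$-th binary digit and both lie strictly below $p_m$, so they give distinct residues modulo $p_m$. Hence
\begin{equation*}
\{m\mid a^\sigma_m\neq a^\tau_m\}\;\supseteq\;\{m\mid N(m)>k_0\}\;\supseteq\;A_{2^{k_0+1}}\in\mathcal{U},
\end{equation*}
which forces $[(a^\sigma_m)]\neq[(a^\tau_m)]$ in $\mathbb{Z}_{[p]}$. Combining this with the upper bound gives $|\mathbb{Z}_{[p]}|=2^{\aleph_0}$.

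The only real subtlety is the one already highlighted: since $\mathcal{U}$ is generally not closed under countable intersections, one cannot pass to a single index subset on which, say, $p_m\geq 2^m$ for all $m$ simultaneously. The device of introducing the quantity $N(m)$ and appealing to one $A_N$ at a time (for each fixed pair $\sigma\neq\tau$) circumvents this obstacle cleanly, and I expect this to be the only step requiring genuine care.
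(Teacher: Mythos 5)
Your proof is correct. It establishes the lower bound by a different (dual) device than the paper: you build an explicit injection $\{0,1\}^{\mathbb{N}}\hookrightarrow\mathbb{Z}_{[p]}$ by truncating the binary expansion of $\sigma$ to fit inside $\mathbb{Z}_{p_m}$, whereas the paper constructs a surjection $h^*:\mathbb{Z}_{[p]}\twoheadrightarrow\{0,1\}^{\omega}$ by sending a representative sequence $(a_m)$ to the ultralimit of its binary expansions $(e(a_m))$ in the Cantor set with the 2-adic metric, and proves surjectivity by choosing, for each target $x$, the nearest admissible point $x_m\in\{e(0),\dots,e(p_m-1)\}$. In effect your injection is a section of the paper's surjection: if you compose them, the $m$-th component of your image is $e(a^{\sigma}_m)=(\sigma_0,\dots,\sigma_{N(m)-1},0,\dots)$, and its ultralimit is $\sigma$ precisely because $A_{2^{k+1}}\in\mathcal{U}$ for each $k$. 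What your route buys is that it is purely combinatorial: you never invoke ultralimits, quasi-compactness, or the Hausdorff property of the Cantor set, and the well-definedness and injectivity checks reduce to elementary congruence arithmetic. What the paper's route buys is a map defined on all of $\mathbb{Z}_{[p]}$ (not just a continuum-sized piece of it), which packages the ``binary digits stabilize along $\mathcal{U}$'' argument into a single appeal to compactness. You also make the easy upper bound $|\mathbb{Z}_{[p]}|\le 2^{\aleph_0}$ explicit, which the paper leaves implicit. Your closing remark about why one cannot pass to a single index set on which $p_m\ge 2^m$ (since $\mathcal{U}$ is not $\sigma$-complete) correctly identifies the genuine subtlety, and your fix — handling one $A_N$ at a time per pair $\sigma\ne\tau$ — mirrors the role the sets $A_m$ play in the paper's surjectivity argument.
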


\begin{proof}
It suffices to show there is a surjection

\begin{equation*}
h^*:\mathbb{Z}_{[p]}\rightarrow \mathfrak{P},
\end{equation*}

where $\mathfrak{P}=\lbrace 0,1\rbrace^{\omega}$ is the Cantor set given as the set of all countable strings of bits with the 2-adic metric

\begin{equation*}
d_2(x,y)=1/k,\;\;k=\min\lbrace m\;|\;x_m\neq y_m\rbrace.
\end{equation*}

The map $h^*$ is constructed as follows. If $\mathfrak{Z}\subset \mathfrak{P}$ is the subset of all strings with finite number of ones in them, and
\begin{equation*}
e:\mathbb{Z}_+\rightarrow\mathfrak{Z},\;\;e\left(\sum_{k<m}f_k2^k\right)=(f_1,\ldots,f_{m-1},0,\ldots)
\end{equation*}
is the bijection that sends a nonnegative integer to its binary decomposition, then for a class representative $a=(a_m)\in[a]\in\mathbb{Z}_{[p]}$ set $h^*(a)$ to be the (unique) ultralimit of the sequence of points $\lbrace x_m=e(a_m)\rbrace$. The correctness of this map rests on the property of the Cantor set being Hausdorff quasi-compact. Surjectivity is then established directly: consider an arbitrary $x\in\mathfrak{P}$. For each $m\in\mathbb{N}$ the set
\begin{equation*}
\mathfrak{P}_m=\lbrace e(0),e(1),\ldots,e(p_m-1)\rbrace
\end{equation*}
consists of $p_m$ distinct points. Let $x_m$ be the nearest to $x$ point from this set with respect to the 2-adic metric.
The sequence $(p_m)$ is unbounded, so that for every $m\in\mathbb{N}$ the index subset

\begin{equation*}
A_m=\lbrace k\in\mathbb{N}\;|\;p_k>2^m\rbrace
\end{equation*}
belongs to the ultrafilter $\mathcal{U}$. It is easily seen that for every $k\in A_m$ one has:

\begin{equation*}
d_2(x,x_k)<1/m
\end{equation*}
But that effectively means that the sequence $(x_m)$ has the ultralimit $x$, after which $a_m=e^{-1}(x_m)$ yields the desired preimage.
\end{proof}

As an immediate corollary of this proposition and the well-known Steinitz theorem, one has

\begin{thm}
The algebraic closure $\overline{\mathbb{Z}_{[p]}}$ of $\mathbb{Z}_{[p]}$ is isomorphic to the field of complex numbers.
\end{thm}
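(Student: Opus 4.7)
The plan is to invoke Steinitz's classical classification of algebraically closed fields: two algebraically closed fields are isomorphic if and only if they have the same characteristic and the same transcendence degree over their prime subfield, and in the uncountable case this transcendence degree is simply the cardinality of the field. So the task reduces to verifying three properties for $\overline{\mathbb{Z}_{[p]}}$: (i) it is algebraically closed, (ii) it has characteristic zero, and (iii) it has cardinality of the continuum. The same three properties obviously hold for $\mathbb{C}$.

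First I would note (i) by definition of algebraic closure and (ii) from the remark at the end of Section on nonstandard residue fields, where it is already stated that $\mathbb{Z}_{[p]}$ has characteristic zero (for a nonprincipal $\mathcal{U}$ and infinite $[p]$); passing to an algebraic extension does not change the characteristic, so $\overline{\mathbb{Z}_{[p]}}$ is also of characteristic zero.

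For (iii), the previous Proposition establishes that $|\mathbb{Z}_{[p]}|=2^{\aleph_0}$. On the one hand, $\overline{\mathbb{Z}_{[p]}}\supseteq\mathbb{Z}_{[p]}$, giving a lower bound of the continuum. On the other hand, an algebraic closure of any infinite field $F$ has cardinality equal to $|F|$: each element of $\overline{F}$ is a root of a nonzero polynomial over $F$, the set of such polynomials has cardinality $|F|$ (as $|F|$ is infinite), and each polynomial has finitely many roots, so $|\overline{F}|\le |F|\cdot\aleph_0=|F|$. Applying this to $F=\mathbb{Z}_{[p]}$ yields $|\overline{\mathbb{Z}_{[p]}}|=2^{\aleph_0}$.

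Combining (i)--(iii), both $\overline{\mathbb{Z}_{[p]}}$ and $\mathbb{C}$ are algebraically closed fields of characteristic zero and cardinality $2^{\aleph_0}$. Since the prime subfield is $\mathbb{Q}$ in both cases and both are uncountable, their transcendence degrees over $\mathbb{Q}$ agree with their cardinality, hence coincide. Steinitz's theorem then furnishes the desired isomorphism $\overline{\mathbb{Z}_{[p]}}\simeq\mathbb{C}$. The only nontrivial input is the cardinality computation for $\mathbb{Z}_{[p]}$ itself, which is already handled by the preceding proposition; aside from that, the argument is entirely a matter of assembling standard facts, so I do not expect any serious obstacle.
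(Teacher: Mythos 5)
Your proposal is correct and follows essentially the same route as the paper: the paper also deduces the theorem as an immediate consequence of the cardinality Proposition together with Steinitz's theorem, after having already noted that $\mathbb{Z}_{[p]}$ has characteristic zero. Your extra detail on why the algebraic closure retains the cardinality of $\mathbb{Z}_{[p]}$ is a standard fact the paper leaves implicit, but otherwise the arguments coincide.
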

\smallskip

We now fix the notation for the aforementioned isomorphisms in order to employ it in the next section.

For any nonstandard prime $[p]\in {}^*\mathbb{P}$ fix an isomorphism $\alpha_{[p]}:\mathbb{C}\rightarrow \overline{\mathbb{Z}_{[p]}}$ coming from the preceding theorem. Denote by $\Theta_{[p]}:\overline{\mathbb{Z}_{[p]}}\rightarrow \overline{\mathbb{Z}_{[p]}}$ the nonstandard Frobenius automorphism - that is, a well-defined field automorphism that sends a sequence of elements to a sequence of their $p_m$-th powers:

\begin{equation*}
(x_m)\mapsto(x_m^{p_m}).
\end{equation*}

The automorphism $\Theta_{[p]}$ is identical on $\mathbb{Z}_{[p]}$; conjugated by $\alpha_{[p]}$, it yields a wild automorphism of complex numbers, as by assumption no finite power of it (as always, in the sense of index subsets $A\in\mathcal{U}$) is the identity homomorphism.

\bigskip

\subsubsection{\bf Extension of the Weyl algebra}\label{wa}
The $n$-th Weyl algebra $A_{n,\mathbb{C}}\simeq A_{n,\overline{\mathbb{Z}_{[p]}}}$ can be realized as a proper subalgebra of the following ultraproduct of algebras

\begin{equation*}
\mathcal{A}_n(\mathcal{U},[p])=\left(\prod_{m\in\mathbb{N}}A_{n,\mathbb{F}_{p_m}}\right)/\mathcal{U}.
\end{equation*}

Here for any $m$ the field $\mathbb{F}_{p_m}=\overline{\mathbb{Z}_{p_m}}$ is the algebraic closure of the residue field $\mathbb{Z}_{p_m}$. This larger algebra contains elements of the form $(\zeta^{I_m})_{m\in\mathbb{N}}$ with unbounded $|I_m|$ - something which is not present in $A_{n,\overline{\mathbb{Z}_{[p]}}}$, hence the proper embedding. Note that for the exact same reason (with degrees $|I_m|$ of differential operators having been replaced by degrees of minimal polynomials of algebraic elements) the inclusion

\begin{equation*}
\overline{\mathbb{Z}_{[p]}}\subseteq \left(\prod_{m\in\mathbb{N}}{F}_{p_m}\right)/\mathcal{U}
\end{equation*}
is also proper.
\smallskip

It turns out that, unlike its standard counterpart $A_{n,\mathbb{C}}$, the algebra $\mathcal{A}_n(\mathcal{U},[p])$ has a huge center described in this proposition:

\begin{prop}
The center of the ultraproduct of Weyl algebras over the sequence of algebraically closed fields $\lbrace \mathbb{F}_{p_m}\rbrace$ coincides with the ultraproduct of centers of $A_{n,\mathbb{F}_{p_m}}$:
\begin{equation*}
C(\mathcal{A}_n(\mathcal{U},[p]))=\left(\prod_{m}C(A_{n,\mathbb{F}_{p_m}})\right)/\mathcal{U}.
\end{equation*}
\end{prop}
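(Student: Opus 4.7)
The statement is an instance of the Łoś-type transfer principle, and I would verify the two containments separately. The containment $\supseteq$ is essentially automatic: if $[c] = (c_m)/\mathcal{U}$ with each $c_m \in C(A_{n,\mathbb{F}_{p_m}})$, then for any $[b] = (b_m)/\mathcal{U}$ in the ultraproduct one has $c_m b_m = b_m c_m$ for every $m$, so the componentwise products agree and hence $[c][b] = [b][c]$. Thus $[c] \in C(\mathcal{A}_n(\mathcal{U},[p]))$.

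For the inclusion $\subseteq$ let $[a] = (a_m)/\mathcal{U}$ be central in the ultraproduct, and set
\[ S = \{\, m \in \mathbb{N} : a_m \in C(A_{n,\mathbb{F}_{p_m}})\,\}. \]
I claim $S \in \mathcal{U}$. Suppose for contradiction that $S \notin \mathcal{U}$, so that the complement $S^c$ lies in $\mathcal{U}$. For each $m \in S^c$ the element $a_m$ fails to be central, so (by the axiom of choice) we may pick an element $b_m \in A_{n,\mathbb{F}_{p_m}}$ with $a_m b_m \neq b_m a_m$; for $m \in S$ put $b_m = 0$. Then $[b] = (b_m)/\mathcal{U}$ lies in $\mathcal{A}_n(\mathcal{U},[p])$, and the set $\{\, m : a_m b_m \neq b_m a_m \,\}$ contains $S^c$, hence belongs to $\mathcal{U}$. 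Consequently $[a][b] \neq [b][a]$, contradicting the assumed centrality of $[a]$. Therefore $S \in \mathcal{U}$, and by redefining $a_m$ arbitrarily (say to $0$) on the complement of $S$ without altering the class of $[a]$ modulo $\mathcal{U}$, we obtain a representative of $[a]$ whose components all lie in the corresponding centers. This realizes $[a]$ as an element of $\left(\prod_m C(A_{n,\mathbb{F}_{p_m}})\right)/\mathcal{U}$.

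Conceptually this is exactly Łoś's theorem applied to the first-order formula $\varphi(x) \equiv \forall y\,(xy = yx)$ in the language of rings, which guarantees that $\varphi([a])$ holds in the ultraproduct if and only if $\{\, m : \varphi(a_m)\,\} \in \mathcal{U}$. The only point demanding any care is the selection of witnesses $b_m$ in the contradiction step, which is a standard appeal to choice; the rest is a direct manipulation of componentwise operations in the ultraproduct. I do not anticipate any obstacle specific to the Weyl algebra itself, since no property beyond associativity and the definition of the center enters the argument.
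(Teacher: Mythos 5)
Your proof is correct and is precisely the standard Łoś-theorem argument that the paper has in mind when it says ``the proof is elementary and is left to the reader''; the two-sided check, the choice of witnesses $b_m$ on the index set $S^c\in\mathcal{U}$, and the observation that an ultraproduct element vanishes iff its zero set lies in $\mathcal{U}$ are all carried out properly. No gap; this is the intended argument.
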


The proof is elementary and is left to the reader. As in positive characteristic the center $C(A_{n,\mathbb{F}_{p}})$ is given by the polynomial algebra

\begin{equation*}
\mathbb{F}_{p}[x_1^{p},\ldots,x_n^{p},y_1^{p},\ldots,y_n^{p}]\simeq \mathbb{F}_{p}[\xi_1,\ldots,\xi_{2n}],
\end{equation*}

there is an injective $\mathbb{C}$-algebra homomorphism

\begin{equation*}
\mathbb{C}[\xi_1,\ldots\xi_{2n}]\rightarrow \left(\prod_{m}\mathbb{F}_{p_m}[\xi_1^{(m)},\ldots\xi_{2n}^{(m)}]\right)/\mathcal{U}
\end{equation*}

from the algebra of regular functions on $\mathbb{A}_{\mathbb{C}}^{2n}$ to the center of $\mathcal{A}_n(\mathcal{U},[p])$, evaluated on the generators in a straightforward way:

\begin{equation*}
\xi_i\mapsto[(\xi_i^{(m)})_{m\in\mathbb{N}}].
\end{equation*}

Just as before, this injection is proper.

Furthermore, the image of this monomorphism (the set which we will simply refer to as the polynomial algebra) may be endowed with the canonical Poisson bracket. Recall that in positive characteristic case for any $a,b\in \mathbb{Z}_p[\xi_1,\ldots,\xi_{2n}]$ one can define

\begin{equation*}
\lbrace a,b\rbrace=-\pi\left(\frac{[a_0,b_0]}{p}\right).
\end{equation*}

Here $\pi:A_{n,\mathbb{Z}}\rightarrow A_{n,\mathbb{Z}_p}$ is the modulo $p$ reduction of the Weyl algebra, and $a_0, b_0$ are arbitrary lifts of $a,b$ with respect to $\pi$. The operation is well defined, takes values in the center and satisfies the Leibnitz rule and the Jacobi identity. On the generators one has
\begin{equation*}
\lbrace \xi_i,\xi_j\rbrace=\omega_{ij}.
\end{equation*}
The Poisson bracket is trivially extended to the entire center $\mathbb{F}_{p}[\xi_1,\ldots,\xi_{2n}]$ and then to the ultraproduct of centers. Observe that the Poisson bracket of two elements of bounded degree is again of bounded degree, hence one has the bracket on the polynomial algebra.

\subsubsection{\bf Endomorphisms and symplectomorphisms}\label{es}

The point of this construction lies in the fact that thus defined Poisson structure on the (injective image of) polynomial algebra is preserved under all endomorphisms of $\mathcal{A}_n(\mathcal{U},[p])$ of bounded degree. Every endomorphism of the standard Weyl algebra is specified by an array of coefficients $(a_{i,I})$ (which form the images of the generators in the standard basis); these coefficients are algebraically dependent, but with only a finite number of bounded-order constraints. Hence the endomorphism of the standard Weyl algebra can be extended to the larger algebra $\mathcal{A}_n(\mathcal{U},[p])$. The restriction of any such obtained endomorphism on the polynomial algebra $\mathbb{C}[\xi_1,\ldots,\xi_{2n}]$ preserves the Poisson structure. In this setup the automorphisms of the Weyl algebra correspond to symplectomorphisms of $\mathbb{A}_{\mathbb{C}}^{2n}$.

\medskip

\textbf{Example}. If $x_i$ and $y_i$ are standard generators, then one may perform a linear symplectic change of variables:
\begin{eqnarray*}
f(x_i)=\sum_{j=1}^na_{ij}x_j+\sum_{j=1}^{n}a_{i,n+j}y_j,\;\;i=1,\ldots,n,\\
f(d_i)=\sum_{j=1}^na_{i+n,j}x_j+\sum_{j=1}^{n}a_{i+n,n+j}y_j,\;\; a_{ij}\in\mathbb{C}.
\end{eqnarray*}
In this case the corresponding polynomial automorphism $f^c$ of
\begin{equation*}
\mathbb{C}[\xi_1,\ldots,\xi_{2n}]\simeq\mathbb{C}[x_1^{[p]},\ldots,x_n^{[p]},y_1^{[p]},\ldots,y_n^{[p]}]
\end{equation*}
acts on the generators $\xi$ as
\begin{equation*}
f^c(\xi_i)=\sum_{j=1}^{2n}(a_{ij})^{[p]}\xi_j,
\end{equation*}
where the notation $(a_{ij})^{[p]}$ means taking the base field automorphism that is conjugate to the nonstandard Frobenius via the Steinitz isomorphism.

Let $\gamma:\mathbb{C}\rightarrow\mathbb{C}$ be an arbitrary automorphism of the field of complex numbers. Then, given an automorphism $f$ of the Weyl algebra $A_{n,\mathbb{C}}$ with coordinates $(a_{i,I})$, one can build another algebra automorphism using the map $\gamma$. Namely, the coefficients $\gamma(a_{i,I})$ define a new automorphism $\gamma_*(f)$ of the Weyl algebra, which is of the same degree as the original one. In other words, every automorphism of the base field induces a map $\gamma_*:A_{n,\mathbb{C}}\rightarrow A_{n,\mathbb{C}}$ which preserves the structure of the ind-object. It obviously is a group homomorphism.

\medskip

Now, if $P_{n,\mathbb{C}}$ denotes the commutative polynomial algebra with Poisson bracket, we may define an ind-group homomorphism $\phi:\Aut(A_{n,\mathbb{C}})\rightarrow \Aut(P_{n\mathbb{C}})$ as follows. Previously we had a morphism $f\mapsto f^c$, however as the example has shown it explicitly depends on the choice of the infinite prime $[p]$. We may eliminate this dependence by pushing the whole domain $\Aut(A_{n,\mathbb{C}})$ forward with a specific base field automorphism $\gamma$, namely $\gamma=\Theta_{[p]}^{-1}$ - the field automorphism which is Steinitz-conjugate with the inverse nonstandard Frobenius, and only then constructing the symplectomorphism $f^c_{\Theta}$ as the restriction to the (nonstandard) center. For the subgroup of tame automorphisms such as linear changes of variables this procedure has a simple meaning: just take the $[p]$-th root of all coefficients $(a_{i,I})$ first. We thus obtain a group homomorphism which preserves the filtration by degree and is in fact well-behaved with respect to the Zariski topology on $\Aut$ (indeed, the filtration $\Aut^{N}\subset \Aut^{N+1}$ is given by Zariski-closed embeddings). Formally, we have a proposition:

\begin{prop}
There is a system of morphisms
\begin{equation*}
\phi_{[p],N}:\Aut^{\leq N}(A_{n,\mathbb{C}})\rightarrow \Aut^{\leq N}(P_{n,\mathbb{C}}).
\end{equation*}
such that the following diagram commutes for all $N\leq N'$:
\begin{equation*}
\begin{tikzcd}
\Aut^{\leq N}(A_{n,\mathbb{C}}) \arrow{r}{\phi_{[p],N}} \arrow{d}{\mu_{NN'}} & \Aut^{\leq N}(P_{n,\mathbb{C}})\arrow{d}{\nu_{NN'}}\\
\Aut^{\leq N'}(A_{n,\mathbb{C}})\arrow{r}{\phi_{[p],N'}} & \Aut^{\leq N'}(P_{n,\mathbb{C}})
\end{tikzcd}
\end{equation*}


The corresponding direct limit of this system is given by $\phi_{[p]}$, which maps a Weyl algebra automorphism $f$ to a symplectomorphism $f^c_{\Theta}$.
\end{prop}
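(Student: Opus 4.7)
The plan is to build $\phi_{[p],N}$ by composing three operations: a base-field twist, an ultrapower extension, and a restriction to the nonstandard center. Concretely, given $f\in\Aut^{\leq N}(A_{n,\mathbb{C}})$, write $f(\zeta_i)=\sum_I a_{i,I}\zeta^I$ with only finitely many nonzero $a_{i,I}$ and $|I|\leq N$, and similarly for $f^{-1}$. First I apply the Steinitz-conjugate of the inverse nonstandard Frobenius: set $g=(\Theta_{[p]}^{-1})_*(f)$, whose defining coefficients are $\alpha_{[p]}^{-1}\Theta_{[p]}^{-1}\alpha_{[p]}(a_{i,I})$. Next, since the defining relations of $A_{n,\mathbb{C}}$ reduce coefficient-wise modulo each $p_m$, the sequence of formulas defining $g$ gives an endomorphism of each $A_{n,\mathbb{F}_{p_m}}$, hence, by taking the ultraproduct, an endomorphism $\tilde g$ of $\mathcal{A}_n(\mathcal{U},[p])$ of degree $\leq N$. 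Applying the same procedure to $f^{-1}$ produces a two-sided inverse, so $\tilde g\in\Aut(\mathcal{A}_n(\mathcal{U},[p]))$.

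The crucial step is to verify that the restriction of $\tilde g$ to the polynomial subalgebra $P_{n,\mathbb{C}}=\mathbb{C}[\xi_1,\dots,\xi_{2n}]\hookrightarrow C(\mathcal{A}_n(\mathcal{U},[p]))$ lands back in $P_{n,\mathbb{C}}$. Since $\tilde g$ has bounded degree and preserves the center (any automorphism does), it sends each $\xi_i=[\zeta_i^{p_m}]$ to a central element of degree $\leq N$ in the $\zeta_j$'s, hence to a polynomial of degree $\leq N$ in the $\xi_j$'s. Its coefficients are images of monomials $\prod a_{i,I}^{p_m}$ in the defining data of $g$, which by construction of $g$ equals the sequence $(\alpha_{[p]}\Theta_{[p]}\alpha_{[p]}^{-1}\alpha_{[p]}^{-1}\Theta_{[p]}^{-1}\alpha_{[p]}(a_{i,I}))=\alpha_{[p]}^{-1}\alpha_{[p]}(a_{i,I})$, i.e. precisely the original $a_{i,I}\in\mathbb{C}$; the twist by $\Theta_{[p]}^{-1}$ was designed exactly to cancel the $[p]$-th power that arises upon restriction to the center. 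Thus the restriction is a symplectomorphism of $\mathbb{A}_{\mathbb{C}}^{2n}$ of degree $\leq N$, which defines $\phi_{[p],N}(f)=f^c_{\Theta}$. Poisson preservation follows from the bounded-degree discussion in the preceding subsection.

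To establish commutativity of the diagram, observe that $\mu_{NN'}$ and $\nu_{NN'}$ are just the tautological inclusions of sub-ind-varieties, and each of the three operations (Steinitz twist, ultrapower extension, restriction to center) is defined by the same formulas on $\Aut^{\leq N}$ and $\Aut^{\leq N'}$. Concretely, for $f\in\Aut^{\leq N}$ viewed inside $\Aut^{\leq N'}$, the coefficient array is identical, so the construction yields the same symplectomorphism, sitting in $\Aut^{\leq N}(P_{n,\mathbb{C}})\subseteq \Aut^{\leq N'}(P_{n,\mathbb{C}})$. Hence $\nu_{NN'}\circ\phi_{[p],N}=\phi_{[p],N'}\circ\mu_{NN'}$, and passing to the direct limit yields the homomorphism $\phi_{[p]}:\Aut(A_{n,\mathbb{C}})\to \Aut(P_{n,\mathbb{C}})$ with $\phi_{[p]}(f)=f^c_{\Theta}$.

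The main obstacle I anticipate is the verification that the restriction of the ultrapower-extended automorphism actually takes $P_{n,\mathbb{C}}$ to itself rather than into the strictly larger ultraproduct of centers, and that the coefficients genuinely return to $\mathbb{C}$ rather than picking up nonstandard $[p]$-th power artifacts. This is the point where the choice $\gamma=\Theta_{[p]}^{-1}$ must be used explicitly; the remaining items (morphism property, degree preservation, compatibility with the ind-structure) are formal consequences of the fact that each step is defined coefficient-wise by bounded-degree polynomial data.
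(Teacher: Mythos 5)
Your proof takes essentially the same route as the paper's own (informal) justification, which appears in the paragraph immediately before the proposition: the paper also defines $\phi_{[p],N}$ as the composition of the base-field twist by $\gamma=\Theta_{[p]}^{-1}$, ultraproduct extension, and restriction to the nonstandard center, and then simply asserts that the result preserves the degree filtration and is well-behaved in the Zariski topology. You correctly identify that the crucial point is showing the coefficients land back in $\mathbb{C}$ rather than in the strictly larger ultraproduct of residue fields, and that the Frobenius twist is what makes this happen.

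Two small points are worth flagging. First, the conjugation chain you wrote, $\alpha_{[p]}\Theta_{[p]}\alpha_{[p]}^{-1}\,\alpha_{[p]}^{-1}\Theta_{[p]}^{-1}\alpha_{[p]}$, has the $\alpha$'s in the first factor reversed; the composition that actually cancels is $\alpha_{[p]}^{-1}\Theta_{[p]}\alpha_{[p]}\circ\alpha_{[p]}^{-1}\Theta_{[p]}^{-1}\alpha_{[p]}=\mathrm{Id}$, i.e.\ raise to the $p_m$-th power, pull back via $\alpha_{[p]}^{-1}$, applied to the $\gamma$-twisted and $\alpha_{[p]}$-pushed coefficient. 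Second, your statement that the coefficients of $\tilde g(\xi_i)$ are ``images of monomials $\prod a_{i,I}^{p_m}$'' and that morphism-ness is a ``formal consequence'' underplays what is actually needed: one must show those coefficients are \emph{polynomials with integer coefficients} in the $p_m$-th powers $a_{i,I,p_m}^{p_m}$, which in the paper is a genuinely nontrivial fact established only later (via the ad-nilpotency argument in the proof that $\Phi$ is a morphism of algebraic varieties). That said, you explicitly flag this as the main obstacle, and the paper itself states the present proposition before proving polynomiality, so your level of rigor here matches the paper's.
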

\medskip

The Belov -- Kontsevich conjecture then states:
\begin{conj}
$\phi_{[p]}$ is a group isomorphism.
\end{conj}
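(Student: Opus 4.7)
The natural starting point is to verify the isomorphism on the tame subgroup. As the linear example worked out before the proposition shows, $\phi_{[p]}$ sends a linear symplectic change of variables with matrix $(a_{ij})$ to the symplectomorphism of $P_{n,\mathbb{C}}$ with the \emph{same} matrix $(a_{ij})$: the twist by $\Theta_{[p]}^{-1}$ exactly cancels the emergence of $[p]$-th powers that would otherwise appear. A parallel calculation for elementary triangular automorphisms $x_1 \mapsto x_1 + Q(x_2,\ldots,y_n)$ (with $Q$ in a Poisson-commuting subalgebra) yields the same pattern. Since tame automorphisms are generated by such elementary pieces and $\phi_{[p]}$ is a group homomorphism, one obtains an isomorphism of tame subgroups $\Aut^{\mathrm{tame}}(A_{n,\mathbb{C}}) \xrightarrow{\sim} \Aut^{\mathrm{tame}}(P_{n,\mathbb{C}})$, whose inverse is the obvious assignment sending a tame symplectomorphism to the Weyl algebra automorphism with matching coefficients on generators. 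For $n=1$, Makar-Limanov's theorem then closes the case, recovering the classical result.

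\textbf{Second stage: extension via niceness and ind-continuity.}
In characteristic zero, every automorphism is nice, that is, approximable in the ind-scheme topology by tame ones (cf.\ the result of Belov--Elishev--Yu cited earlier). Given $f \in \Aut(A_{n,\mathbb{C}})$, one chooses a sequence of tame automorphisms $f_k$ with $f_k \to f$ in $\Aut^{\leq N}$ for some $N$. The commutative diagram in the preceding proposition shows that $\phi_{[p]}$ preserves the filtration by degree, so the sequence $\phi_{[p]}(f_k)$ lives in the finite-dimensional variety $\Aut^{\leq N}(P_{n,\mathbb{C}})$ and, after passing to a subsequence, converges to some symplectomorphism $g$. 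One then identifies $g = \phi_{[p]}(f)$ by tracking coefficients through the ultraproduct description of $\mathcal{A}_n(\mathcal{U},[p])$ and the restriction-to-center procedure. Conversely, every symplectomorphism of $P_{n,\mathbb{C}}$ is approximated by tame ones, and lifting these to tame Weyl algebra automorphisms produces a Cauchy-type sequence whose limit in $\Aut^{\leq N}(A_{n,\mathbb{C}})$ is the desired preimage. Continuity of $\phi_{[p]}$ together with the Zariski-closedness of the embeddings $\Aut^{\leq N} \hookrightarrow \Aut^{\leq N+1}$ secures both injectivity and surjectivity at each filtration level.

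\textbf{Main obstacle: independence of the infinite prime.}
The genuine difficulty is that the entire construction a priori depends on the choice of ultrafilter $\mathcal{U}$, the infinite prime $[p]$, and the Steinitz isomorphism $\alpha_{[p]}$. For the strategy above to yield a canonical isomorphism, one must establish that for any two infinite primes $[p_1], [p_2]$ and any $f \in \Aut(A_{n,\mathbb{C}})$, the symplectomorphisms $\phi_{[p_1]}(f)$ and $\phi_{[p_2]}(f)$ agree. On the tame subgroup this is manifest from the explicit formulas of the first stage, but transferring it to all of $\Aut(A_{n,\mathbb{C}})$ appears to require a purely algebraic characterization of $\phi_{[p]}(f)(\xi_i)$ that bypasses the nonstandard center-extraction altogether. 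Absent such a characterization, the approximation step of the second stage only produces a candidate preimage for symplectomorphisms in the closure of the image of the tame subgroup; verifying that this closure exhausts $\Aut(P_{n,\mathbb{C}})$ (equivalently, controlling potential wild symplectomorphisms for $n \geq 2$) is where the argument is most likely to break down, and is precisely the content of the open problem highlighted by the paper.
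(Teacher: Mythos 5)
The statement you are asked to prove is labeled in the paper as a \emph{conjecture} (the Belov--Kontsevich conjecture), and the paper explicitly does not prove it: it cites \cite{4} for injectivity of $\phi_{[p]}$ and then proceeds to \emph{assume} surjectivity in later arguments (``Let us at first assume that the Belov--Kontsevich conjecture holds\dots This would be the case if all automorphisms in $\Aut(A_{n,\mathbb{C}})$ were tame, which is unknown at the moment for $n>1$''). So there is no proof in the paper to compare against; your third stage correctly recognizes that what you have written is not a proof, and that the breakdown point is precisely the open problem.

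That said, a few remarks on the intermediate steps so the gap is located precisely. Your first stage is consistent with what the paper states as known: $\phi_{[p]}$ restricts to an isomorphism between the tame subgroups, with the twist by $\Theta_{[p]}^{-1}$ cancelling the $[p]$-th powers visible in the linear example, and for $n=1$ this finishes by Makar-Limanov. Your second stage, however, contains a genuine error and a circularity. The error: the sentence ``Continuity of $\phi_{[p]}$ together with the Zariski-closedness of the embeddings $\Aut^{\leq N}\hookrightarrow\Aut^{\leq N+1}$ secures both injectivity and surjectivity at each filtration level'' is simply false --- continuity of a map and closedness of the filtration do not yield surjectivity; injectivity is a separate theorem cited from \cite{4}. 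The circularity: the ``niceness'' statement you invoke is quoted in the paper only for $K[x_1,\dots,x_n]$ and $K\langle x_1,\dots,x_n\rangle$, not for $\Aut(A_{n,\mathbb{C}})$ or $\Aut(P_{n,\mathbb{C}})$; even granting it, an approximating sequence of tame automorphisms gives you a candidate limit, but identifying that limit as $\phi_{[p]}(f)$ and producing a genuine preimage for an arbitrary symplectomorphism requires exactly the kind of control over wild automorphisms (for $n\geq 2$) whose absence makes this a conjecture. In short: stage one is fine, stage two asserts what needs proving, and stage three is the honest assessment --- the conjecture remains open and the paper treats it as such.
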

Injectivity may be established right away.
\begin{thm}
$\phi_{[p]}$ is an injective homomorphism.
\end{thm}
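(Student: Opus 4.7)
The plan is to establish injectivity of $\phi_{[p]}$ by showing its kernel is trivial; since $\phi_{[p]}$ is the direct limit of the ind-group morphisms stated in the preceding proposition, this suffices. Let $h\in\Aut(A_{n,\mathbb{C}})$ satisfy $\phi_{[p]}(h)=\mathrm{id}$. Unwinding the definition of $\phi_{[p]}$, its output is the restriction to the polynomial center $\mathbb{C}[\xi_1,\ldots,\xi_{2n}]$ of the extension $\tilde{h}_\gamma$ of $\gamma_*(h)$ to $\mathcal{A}_n(\mathcal{U},[p])$, where $\gamma=\Theta_{[p]}^{-1}$ is Steinitz-conjugate to the inverse nonstandard Frobenius. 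Since $\xi_i=\zeta_i^{[p]}$ in the ultraproduct and $\tilde{h}_\gamma$ is an algebra homomorphism, one obtains $\phi_{[p]}(h)(\xi_i)=\tilde{h}_\gamma(\zeta_i)^{[p]}=\gamma_*(h)(\zeta_i)^{[p]}$, and the hypothesis $\phi_{[p]}(h)=\mathrm{id}$ becomes the system of equations $\gamma_*(h)(\zeta_i)^{[p]}=\zeta_i^{[p]}$ in $\mathcal{A}_n(\mathcal{U},[p])$ for $i=1,\ldots,2n$.

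The main technical step is then the following lemma: the set map $a\mapsto a^{[p]}$ is injective on the image of $A_{n,\mathbb{C}}$ inside $\mathcal{A}_n(\mathcal{U},[p])$. I would prove this factor-by-factor using the filtration by total $\zeta$-degree. For a bounded-degree element $a^{(m)}\in A_{n,\mathbb{F}_{p_m}}$ of $\zeta$-degree $d$, the $p_m$-th power $(a^{(m)})^{p_m}$ lies in the center $\mathbb{F}_{p_m}[\xi]$; expanding via Jacobson's formula $(X+Y)^{p_m}=X^{p_m}+Y^{p_m}+\sum_{k=1}^{p_m-1}s_k(X,Y)$, the top $\xi$-degree part (of $\xi$-degree $d$) equals the commutative Frobenius of the classical symbol of $a^{(m)}$ in the associated graded $\mathrm{gr}(A_{n,\mathbb{F}_{p_m}})=\mathbb{F}_{p_m}[\zeta]$. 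Since Frobenius is injective on the reduced ring $\mathbb{F}_{p_m}[\zeta]$, this recovers the top $\zeta$-symbol of $a^{(m)}$. Descending induction on $\zeta$-degree, using that each Lie correction $s_k$ contributes terms of strictly lower $\xi$-degree than $X^{p_m}+Y^{p_m}$, recovers the remaining homogeneous components of $a^{(m)}$; passing to the ultraproduct yields the claimed injectivity.

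Combining the two steps: applying the lemma with $a=\gamma_*(h)(\zeta_i)$ and $b=\zeta_i$ gives $\gamma_*(h)(\zeta_i)=\zeta_i$ for every $i$. Since the $\zeta_i$ generate $A_{n,\mathbb{C}}$ as a $\mathbb{C}$-algebra, $\gamma_*(h)=\mathrm{id}$; and since the map $\gamma_*\colon\Aut(A_{n,\mathbb{C}})\to\Aut(A_{n,\mathbb{C}})$ induced by the field automorphism $\gamma$ is a group isomorphism (with inverse $(\gamma^{-1})_*$), we conclude $h=\mathrm{id}$, as desired.

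The main obstacle is the descending induction controlling Jacobson's correction terms. One must verify that each iterated Lie polynomial $s_k(X,Y)$ in characteristic $p_m$, with inputs being reductions of bounded-degree elements of the standard Weyl algebra, lowers the $\xi$-degree filtration in a controlled manner, and that the inductive hypothesis can actually be applied to the residual element after subtracting off the recovered top symbol. This is essentially a combinatorial exercise on the interaction of the canonical filtration with Jacobson's formula, but care is needed because iterated brackets in the Weyl algebra can produce unexpected central contributions (for instance $[x_i,y_i]=1$ forces short brackets to land in $\mathbb{F}_{p_m}$). Once injectivity of the $[p]$-th power on bounded-degree elements is in place, the remainder of the argument is a formal consequence of the fact that $\phi_{[p]}$ was constructed to be compatible with the algebra structure and the filtration by degree.
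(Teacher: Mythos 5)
The paper cites \cite{4} for the proof rather than supplying one, so there is no in-text argument to compare against. Your reduction to the system $\gamma_*(h)(\zeta_i)^{[p]}=\zeta_i^{[p]}$ and the concluding steps are correct, but the interpolated lemma (injectivity of $a\mapsto a^{[p]}$ on all of $A_{n,\mathbb{C}}$, proved by descending induction on degree through Jacobson's formula) has a genuine gap. Write $a=b+c$ with $d=\deg b=\deg a$ and $e=\deg c<d$, reduce mod $p=p_m$, and expand $(b+c)^p$. The induction needs the degree-$pe$ piece of $a^p-b^p$ to detect only the leading symbol $\sigma(c)^p$; but the top Jacobson correction $s_{p-1}(b,c)=\ad(b)^{p-1}(c)$ has Bernstein degree up to $e+(p-1)(d-2)$, which strictly exceeds $pe$ whenever $e<d-2$ (and in fact grows linearly in $p$ once $d\geq 3$, since then $\ad(b)$ raises degree). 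So after the first step of the induction the correction terms can swamp $c^p$, the recovery of $\sigma(c)$ from the leading part fails, and the assertion that each $s_k$ ``contributes terms of strictly lower degree'' does not do the work you need. This is not a routine combinatorial fix; the degree bookkeeping genuinely breaks for $e\leq d-2$.

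Fortunately the proof only ever invokes the lemma with $b=\zeta_i$, of degree $1$, and there the very first step already finishes. Since $\mathrm{gr}(A_{n,\mathbb{F}_{p_m}})\cong\mathbb{F}_{p_m}[\zeta]$ is a domain, $\deg(a^{p_m})=p_m\deg a$; so $a^{p_m}=\zeta_i^{p_m}$ forces $\deg a=1$ and $\sigma(a)=\zeta_i$, hence $c=a-\zeta_i$ is a central scalar, $(\zeta_i+c)^{p_m}=\zeta_i^{p_m}+c^{p_m}$ with no Jacobson corrections at all, and $c^{p_m}=0$ gives $c=0$. Restricting the lemma to this case makes your argument correct. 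A cleaner alternative that avoids symbols entirely: $A_{n,\mathbb{F}_{p_m}}$ is Azumaya over its center $Z_{p_m}\cong\mathbb{F}_{p_m}[\xi]$, $\mathrm{Pic}(Z_{p_m})=0$ because $Z_{p_m}$ is a polynomial ring, and the units of $A_{n,\mathbb{F}_{p_m}}$ are scalars; the Rosenberg--Zelinsky sequence then shows that an automorphism restricting to the identity on the center is itself the identity, which is precisely the injectivity statement after reduction modulo $p_m$.
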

(See \cite{4} for the fairly elementary proof). 

\subsubsection{\bf On the loops related to infinite primes}\label{loops}
Let us at first assume that the Belov -- Kontsevich conjecture holds, with $\phi_{[p]}$ furnishing the isomorphism between the automorphism groups. This would be the case if all automorphisms in  $\Aut(A_{n,\mathbb{C}})$ were tame, which is unknown at the moment for $n>1$.


The main result of the paper is as follows:

\begin{thm}
If one assumes that $\phi_{[p],N}$ is surjective for any infinite prime $[p]$, then $\Phi_N$ is quasifinitedimensional and its eigenvalues are roots of unity.
\end{thm}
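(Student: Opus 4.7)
\begin{proof-sketch}
First I would pin down what $\Phi_N$ stands for in this setting: given the surjectivity of $\phi_{[p],N}$ for every infinite prime, the natural object to look at is the automorphism of the truncated ind-group $\Aut^{\leq N}(P_{n,\mathbb{C}})$ obtained by comparing the construction for two different infinite primes, i.e.\ $\Phi_N = \phi_{[p],N}\circ \phi_{[p'],N}^{-1}$, together with its induced action on the coordinate ring (or tangent spaces at $\mathrm{id}$) of the ind-variety of degree$-N$ symplectomorphisms. Under the surjectivity hypothesis this is a well-defined ind-group automorphism of $\Aut^{\leq N}(P_{n,\mathbb{C}})$.

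The key observation is that, at the level of coefficients $(a_{i,I})$ parametrizing a symplectomorphism, the composition $\Phi_N$ acts as the Steinitz-transferred field automorphism $\sigma=\Theta_{[p]}^{-1}\circ\Theta_{[p']}$ applied entrywise (after identifying both copies of $\mathbb{C}$ with $\overline{\mathbb{Z}_{[p]}}$ and $\overline{\mathbb{Z}_{[p']}}$). Although $\sigma$ is a wild automorphism of $\mathbb{C}$, the crucial fact is that $\Phi_N$ preserves the ind-scheme structure of $\Aut^{\leq N}(P_{n,\mathbb{C}})$ and is compatible with the filtration $N\leq N'$. Hence I would first verify that every $\sigma$-semilinear $\Phi_N$ arising this way stabilizes every Zariski-closed subscheme of $\Aut^{\leq N}(P_{n,\mathbb{C}})$ defined over $\overline{\mathbb{Q}}$; this essentially follows from the fact that the Poisson relations $\lbrace f(\xi_i),f(\xi_j)\rbrace=\omega_{ij}$ are defined over $\mathbb{Q}$, hence are $\sigma$-invariant.

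To get quasifinitedimensionality I would pass to the tangent representation at the identity, which gives a linear action of $\Phi_N$ on the finite-dimensional $\mathbb{C}$-vector space of degree-bounded Hamiltonian vector fields modulo higher order terms, or more precisely on the jet space of the parameter variety at $\mathrm{id}$. Because any element of this jet space has only finitely many coefficients, and the $\overline{\mathbb{Q}}$-span of a single coefficient is preserved by $\sigma$ (being the closure of a finitely generated extension), every vector lies in a $\Phi_N$-invariant finite-dimensional subspace defined over $\overline{\mathbb{Q}}$; this is the content of quasifinitedimensionality. For the statement on eigenvalues: once restricted to such a finite-dimensional $\overline{\mathbb{Q}}$-defined subspace, the operator $\Phi_N$ is both invertible and acts by a field automorphism on the coefficients of any matrix representative; in particular its characteristic polynomial is $\sigma$-stable, i.e.\ has coefficients fixed by $\sigma$. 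Combined with the fact that $\sigma$ comes from a nonstandard Frobenius, whose Galois-theoretic traces on finite extensions are periodic (of order bounded in terms of the extension degree), this forces the eigenvalues of $\Phi_N$ to be algebraic integers whose conjugates all lie on the unit circle, hence roots of unity by Kronecker's theorem.

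The main obstacle I expect is the second step: making the passage from the ind-group automorphism $\Phi_N$ to a genuine linear operator with a well-defined spectrum, given that $\Aut^{\leq N}(P_{n,\mathbb{C}})$ is an infinite-dimensional affine variety. One needs to carefully justify why the linearization at $\mathrm{id}$ (or on an appropriate finite-dimensional graded component of the coordinate ring) faithfully records the eigenvalue data of $\Phi_N$. The bookkeeping of how surjectivity of $\phi_{[p],N}$ interacts with the ind-structure, and the absence (so far) of a proof that all automorphisms are tame, makes it delicate to cut out a canonical finite-dimensional slice; but a suitable jet-space truncation, together with the Kronecker-type argument, should deliver the conclusion.
\end{proof-sketch}
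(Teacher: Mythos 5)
Your proposal misidentifies the nature of $\Phi_N$ in a way that breaks the rest of the argument. You claim that $\Phi_N$ acts on the coefficients $(a_{i,I})$ by applying the field automorphism $\sigma=\Theta_{[p]}^{-1}\circ\Theta_{[p']}$ entrywise, and then try to extract eigenvalue information from the Galois-theoretic behaviour of $\sigma$. But the whole point of the shift by the inverse nonstandard Frobenius in the construction of $\phi_{[p]}$ is to \emph{cancel} the field-automorphism part, and the substantial content of the paper's first Proposition in this section is precisely that $\Phi$ is a genuine morphism of algebraic varieties over $\mathbb{C}$ --- equivalently, $\Phi_N^*$ is a $\mathbb{C}$-algebra endomorphism and $\Phi_{N,M}^*$ is a bona fide $\mathbb{C}$-linear operator with a well-defined characteristic polynomial. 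If $\Phi_N$ were $\sigma$-semilinear as you assert, the linearization you propose would not even be $\mathbb{C}$-linear, and ``eigenvalues'' would be ill-defined; moreover $\sigma$ is a wild automorphism of $\mathbb{C}$ whose finite powers are never the identity, so the periodicity you invoke to feed Kronecker's theorem does not hold. The $\sigma$-stability of the characteristic polynomial and the passage to $\overline{\mathbb{Q}}$-defined invariant subspaces both inherit this flaw.

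The paper's actual route is quite different and does not pass through semilinearity or Kronecker's theorem at all. After establishing that $\Phi$ is a morphism of algebraic varieties (via locally ad-nilpotent elements and the identity $\ad_a^{p-1}(b)\equiv\sum_{l=0}^{p-1}a^lba^{p-1-l}\;(\mathrm{mod}\;p)$), it introduces the partial order on places $(i,I)$ by $|I|$, shows that the functions $G_{i,I}$ at dominant places are identities (the top coefficient of $f_p(\zeta_i)^p$ is $a_{i,I,p}^p$, and the Frobenius shift takes the $p$-th root), observes that subdominant places are likewise fixed because reordering within monomials changes $|I|$ by an even amount, and then runs a Newton-polygon/pole-order analysis on algebraic curves in $\Aut^{\leq N}(A_{n,\mathbb{C}})$ descending through the filtration $\mathcal{X}_N^{(k)}$. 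This yields that $\Phi_N$ merely permutes the irreducible components of $\Aut^{\leq N}(A_{n,\mathbb{C}})$, hence $\Phi_N^l=\mathrm{Id}$ for some $l$, and the eigenvalue claim for the finite-dimensional truncations $\Phi_{N,M}^*$ follows because a non-root-of-unity eigenvalue would produce an exceptional curve whose singularity changes under $\Phi_N$. Your sketch contains none of this machinery, and the premise it substitutes for it is false.
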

\smallskip

Let $[p]$ and $[p']$ be two distinct classes of $\mathcal{U}$-congruent prime number sequences - that is, two distinct infinite primes. We then have the following diagram:
\begin{equation*}
\begin{tikzcd}
\Aut(A_{n,\mathbb{C}}) \arrow{r}{\phi_{[p]}} \arrow{d}{isom} & \Aut(P_{n,\mathbb{C}})\arrow{d}{isom}\\
\Aut(A_{n,\mathbb{C}})\arrow{r}{\phi_{[p']}} & \Aut(P_{n,\mathbb{C}})
\end{tikzcd}
\end{equation*}
with all arrows being isomorphisms. Vertical isomorphisms answer to different presentations of $\mathbb{C}$ as $\overline{\mathbb{Z}_{[p]}}$ and $\overline{\mathbb{Z}_{[p']}}$. The corresponding automorphism $\mathbb{C}\rightarrow \overline{\mathbb{Z}_{[p]}}$ is denoted by $\alpha_{[p]}$ for any $[p]$.
\smallskip

The fact that all the arrows in the diagram are isomorphisms allows one instead to consider a loop of the form
\begin{equation*}
\Phi:\Aut(A_{n,\mathbb{C}})\rightarrow \Aut(A_{n,\mathbb{C}}).
\end{equation*}

Furthermore, as it was noted in the previous section, the morphism $\Phi$ belongs to \\
$\Aut(\Aut(A_{n,\mathbb{C}}))$.

We need to prove that $\Phi$ is a trivial automorphism. The first observation is as follows.

\begin{prop}
The map $\Phi$ is a morphism of algebraic varieties.
\end{prop}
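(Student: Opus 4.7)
The plan is to verify the morphism property on each finite-dimensional filtration piece $\Aut^{\leq N}(A_{n,\mathbb{C}})$, since an ind-morphism is by definition a compatible family of morphisms on the filtration strata, and then pass to the direct limit. The loop decomposes as $\Phi = \phi_{[p']}^{-1}\circ\sigma\circ\phi_{[p]}$, where $\sigma$ denotes the vertical Steinitz-bijection on $\Aut(P_{n,\mathbb{C}})$ induced by the change of identification $\alpha_{[p']}\circ\alpha_{[p]}^{-1}\colon\mathbb{C}\to\mathbb{C}$. By the preceding proposition both $\phi_{[p],N}$ and $\phi_{[p'],N}$ are morphisms of algebraic varieties; under the standing assumption that the Belov--Kontsevich conjecture holds, both are isomorphisms, so $\phi_{[p'],N}^{-1}$ is likewise regular. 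Hence only the middle factor $\sigma$ requires attention.

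Next, I would unwind the coefficient-level action of $\phi_{[p]}$. A generic $f\in\Aut^{\leq N}(A_{n,\mathbb{C}})$ is specified by a bounded tuple $(a_{i,I})$ with $|I|\leq N$, and the image $\phi_{[p]}(f)$ has coefficients expressible as explicit polynomial combinations of the $a_{i,I}$ with universal coefficients independent of the infinite prime. This is because the two arithmetic operations involved in the construction of $\phi_{[p]}$---extracting $[p]$-th roots via the twist $\Theta_{[p]}^{-1}$ and then taking $[p]$-th powers via the restriction to the center of the ultraproduct Weyl algebra---cancel on each bounded-degree stratum to yield a regular map, as is already visible in the linear example of Section~\ref{es}. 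The same analysis applies verbatim to $\phi_{[p']}^{-1}$.

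The final and hardest step is to show that the Steinitz bijection $\sigma$, conjugated by the Frobenius-twisted morphisms, restricts to a morphism of algebraic varieties on each filtration piece. The key point is that the wild field-automorphism component of $\sigma$ is exactly what the twists $\Theta_{[p]}^{-1}$ and $\Theta_{[p']}^{-1}$ built into the construction of $\phi_{[p]}$ and $\phi_{[p']}$ are designed to absorb: on the image of $\phi_{[p],N}$ the coefficient-level action of $\sigma$ collapses to a regular operation, so that $\Phi_N = \phi_{[p'],N}^{-1}\circ\sigma\circ\phi_{[p],N}$ is a composition of morphisms of algebraic varieties. Passing to the direct limit over $N$ then yields the desired ind-morphism property of $\Phi$.

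The essential obstacle lies in making this last cancellation fully rigorous, since $\sigma$ has no intrinsic algebraic structure as an abstract field automorphism of $\mathbb{C}$. The finite-dimensionality of each $\Aut^{\leq N}$ is crucial: it permits the replacement of a wild automorphism by a regular map once the bounded-degree data is fixed, because only finitely many algebraic relations among the coefficients $(a_{i,I})$ must be preserved, and these relations are transported by $\sigma$ in a way that is recorded polynomially in the coordinates after Frobenius-twist conjugation.
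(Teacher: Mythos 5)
Your high-level outline — decompose $\Phi$ into $\phi_{[p']}^{-1}\circ\sigma\circ\phi_{[p]}$, work on each finite-dimensional stratum $\Aut^{\leq N}$, and argue that the wild field automorphism is absorbed by the Frobenius twists — reconstructs the \emph{shape} of the argument correctly, and you are right that the finite-dimensionality of each stratum is essential. But the load-bearing step, which you explicitly flag as ``the essential obstacle'' and never actually close, is exactly the content of the paper's proof. You assert that ``the image $\phi_{[p]}(f)$ has coefficients expressible as explicit polynomial combinations of the $a_{i,I}$'' because the root-extraction and power-raising operations ``cancel on each bounded-degree stratum,'' citing the linear example as evidence. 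This does not hold at the level of formal cancellation: the restriction-to-center map sends $f_p(\zeta_i)$ to $f_p(\zeta_i)^p$ inside the noncommutative Weyl algebra in characteristic $p$, and it is a genuine theorem (not a tautology about inverse operations) that the resulting element, when written in the central generators $\xi_j = \zeta_j^p$, has coefficients that are polynomials in the $p$-th powers $a_{i,I}^p$. Even the linear case requires a nontrivial identity, and the general case is harder.

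The paper proves this polynomiality by a derivation argument: it picks a transcendence basis $t_1,\dots,t_s$ for the coordinates over the prime field, extends every $\mathbb{Z}_p$-derivation $\delta = e_a D_{t_b}$ of the coefficient field to a derivation of $A_{n,\mathbb{F}_p}$, and shows that $\delta\bigl(f_p(\zeta_i)^p\bigr) = 0$. The computation uses that $a = f_p(\zeta_i)$ is locally ad-nilpotent with nilpotency order $D \leq N+1$, and that for $p > D$ one has $\ad_a^{p-1}(b) = \sum_{l}(-1)^l\binom{p-1}{l}a^lba^{p-1-l} \equiv \sum_l a^l b a^{p-1-l} \pmod p$, which is precisely the Leibniz expansion of $\delta(a^p)$. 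Hence all base-field derivations kill the center coordinates, so those coordinates are polynomial in the $p$-th powers, and the Frobenius shift converts this into regularity over $\mathbb{C}$. Without this (or an equivalent) argument your proof only records the claim that must be proved; pointing to the preceding proposition is also not available, since that proposition's morphism assertion is itself established by the very computation you are being asked to supply.
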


\begin{proof}
Basically, this is a property of $\phi_{[p]}$ (or rather its unshifted version, $f_p\mapsto f^c_p$). More precisely, it suffices to show that, given an automorphism $f_p$ of the Weyl algebra in positive characteristic $p$ with coordinates $(a_{i,I})$, its restriction to the center (a symplectomorphism) $f^c_p$ has coordinates which are polynomials in $(a_{i,I}^p)$.

\smallskip

The switch to positive characteristic and back is performed for a fixed $f\in\Aut (A_{n,\mathbb{C}})$ on an index subset $A_f\in\mathcal{U}$.

\smallskip

Let $f$ be an automorphism of $A_{n,\mathbb{C}}$ and let $N=\Deg f$ be its degree. The automorphism $f$ is given by its coordinates $a_{i,I}\in\mathbb{C}$, $i=1,\ldots,2n$, $I=\lbrace i_1,\ldots,i_{2n}\rbrace$, obtained from the decomposition of algebra generators $\zeta_i$ in the standard basis of the free module:
\begin{equation*}
f(\zeta_i)=\sum_{i,I}a_{i,I}\zeta^I,\;\;\zeta^I=\zeta_1^{i_1}\cdots\zeta_{2n}^{i_{2n}}.
\end{equation*}

\medskip

Let $(a_{i,I,p})$ denote the class $\alpha_{[p]}(a_{i,I})$, $p=(p_m)$, and let $\lbrace R_k(a_{i,I}\;|\;i,I)=0\rbrace_{k=1,\ldots,M}$ be a finite set of algebraic constraints for coefficients $a_{i,I}$. Let us denote by $A_1,\ldots,A_M$ the index subsets from the ultrafilter $\mathcal{U}$, such that $A_k$ is precisely the subset, on whose indices the constraint $R_k$ is valid for $(a_{i,I,p})$. Take $A_f=A_1\cap\ldots\cap A_M\in\mathcal{U}$ and for $p_m$, $m\in A_f$, define an automorphism $f_{p_m}$ of the Weyl algebra in positive characteristic $A_{n,\mathbb{F}_{p_m}}$ by setting
\begin{equation*}
f_{p_m}(\zeta_i)=\sum_{i,I}a_{i,I,p_m}\zeta^I.
\end{equation*}
All of the constraints are valid on $A_f$, so that $f$ corresponds to a class $[f_p]$ modulo ultrafilter $\mathcal{U}$ of automorphisms in positive characteristic. The degree of every $f_{p_m}$ ($m\in A_f$) is obviously less than or equal to $N=\Deg f$.

\bigskip


Now consider $f\in\Aut^{\leq N}(A_{n,\mathbb{C}})$ with the index subset $A_f$ over which its defining constraints are valid. The automorphisms $f_{p_m}=f_p:A_{n,\mathbb{F}_p}\rightarrow A_{n,\mathbb{F}_p}$ defined for $m\in A_f\in\mathcal{U}$ provide arrays of coordinates $a_{i,I,p}$. Let us fix any valid $p_m=p$ denote by $\mathit{F}_{p^k}$ a finite subfield of $\mathbb{F}_p$ which contains the respective coordinates $a_{i,I,p}$ (one may take $k$ to be equal to the maximum degree of all minimal polynomials of elements $a_{i,I,p}$ which are algebraic over $\mathbb{Z}_p$).

\smallskip

Let $a_1,\ldots,a_s$ be the transcendence basis of the set of coordinates $a_{i,I,p}$ and let $t_1,\ldots,t_s$ denote $s$ independent (commuting) variables. Consider the field of rational functions:

\begin{equation*}
\mathit{F}_{p^k}(t_1,\ldots,t_s).
\end{equation*}
\medskip
The vector space
\begin{equation*}
\Der_{\mathbb{Z}_p}(\mathit{F}_{p^k}(t_1,\ldots,t_s),\mathit{F}_{p^k}(t_1,\ldots,t_s))
\end{equation*}
of all $\mathbb{Z}_p$-linear derivations of the field $\mathit{F}_{p^k}(t_1,\ldots,t_s)$ is finite-dimensional with \\$\mathbb{Z}_p$-dimension equal to $ks$; a basis of this vector space is given by elements $$\lbrace e_a\mathit{D}_{t_b}\;|\;a=1,\ldots, k,\;b=1,\ldots,s\rbrace$$ where $e_a$ are basis vectors of the $\mathbb{Z}_p$-vector space  $\mathit{F}_{p^k}$, and $\mathit{D}_{t_b}$ is the partial derivative with respect to the variable $t_b$.

\smallskip

Set $a_1,\ldots,a_s=t_1,\ldots,t_s$ (i.e. consider an $s$-parametric family of automorphisms), so that the rest of the coefficients $a_{i,I,p}$ are algebraic functions of $s$ variables $t_1,\ldots,t_s$. We need to show that the coordinates of the corresponding symplectomorphism $f_p^c$ are annihilated by all derivations $e_a\mathit{D}_{t_b}$.

Let $\delta$ denote a derivation of the Weyl algebra induced by an arbitrary basis derivation $e_a\mathit{D}_{t_b}$ of the field. For a given $i$, let us introduce the short-hand notation
\begin{equation*}
a=f_p(\zeta_i),\;\;b=\delta(a).
\end{equation*}
\medskip
We need to prove that
\begin{equation*}
\delta(f^c(\xi_i))=\delta(f_p(\zeta_i^p))=0.
\end{equation*}
In our notation $\delta(f_p(\zeta_i^p))=\delta(a^p)$, so by Leibnitz rule we have:
\begin{equation*}
\delta(f_p(\zeta_i^p))=ba^{p-1}+aba^{p-2}+\cdots+a^{p-1}b.
\end{equation*}
\smallskip

Let $\ad_x:A_{n,\mathbb{F}_p}\rightarrow A_{n,\mathbb{F}_p}$ denote a $\mathbb{Z}_p$-derivation of the Weyl algebra corresponding to the adjoint action (all Weyl algebra derivations are inner!):
\begin{equation*}
\ad_x(y)=[x,y].
\end{equation*}
We will call an element $x\in A_{n,\mathbb{F}_p}$ locally ad-nilpotent if for any $y\in A_{n,\mathbb{F}_p}$ there is an integer $D=D(y)$ such that
\begin{equation*}
\ad_x^D(y)=0.
\end{equation*}
All algebra generators $\zeta_i$ are locally ad-nilpotent. Indeed, one could take $D(y)=\Deg y + 1$ for every $\zeta_i$.

If $f$ is an automorphism of the Weyl algebra, then $f(\zeta_i)$ is also a locally ad-nilpotent element for all $i=1,\ldots,2n$. That means that for any $i=1,\ldots,2n$ there is an integer $D\geq N+1$ such that
\begin{equation*}
\ad_{f_p(\zeta_i)}^D(\delta(f_p(\zeta_i)))=\ad_{a}^D(b)=0.
\end{equation*}

\smallskip

Now, for $p\geq D+1$ the previous expression may be rewritten as
\begin{equation*}
0=\ad_{a}^{p-1}(b)=\sum_{l=0}^{p-1}(-1)^l\binom{p-1}{l}a^lba^{p-1-l}\equiv\sum_{l=0}^{p-1}a^lba^{p-1-l}\;\;(\text{mod\;} p),
\end{equation*}
and this is exactly what we wanted. \\
\smallskip

We have thus demonstrated that for an arbitrary automorphism $f_p$ of the Weyl algebra in characteristic $p$ the coordinates of the corresponding symplectomorphism $f_p^c$ are polynomial in $p$-th powers of the coordinates of $f_p$, provided that $p$ is greater than $\Deg f_p + 1$. As the sequence $(\Deg f_{p_m})$ is bounded from above by $N$ for all $m\in A_f$, we see that there is an index subset $A^*_f\in\mathcal{U}$ such that the coordinates of the symplectomorphism $f_{p_m}^c$ for $m\in A^*_f$ are polynomial in $p_m$-th powers of $a_{i,I,p_m}$. This implies that $f^c$ in characteristic zero is given by coefficients polynomial in $\alpha_{[p]}(a_{i,I})^{[p]}$ as desired.

\smallskip

It follows, after shifting by the inverse nonstandard Frobenius, that $\Phi$ is an endomorphism of the algebraic variety $\Aut (A_{n,\mathbb{C}})$.
\end{proof}

\medskip

The automorphism $\Phi$ acting on elements $f\in\Aut(A_{n,\mathbb{C}})$, takes the set of coordinates $(a_{i,I})$ and returns a set $(G_{i,I}(a_{k,K}))$ of the same size. All functions $G_{i,I}$ are algebraic by the above proposition. It is convenient to introduce a partial order on the set of coordinates. We say that $a_{i,I'}$ is higher than $a_{i,I}$ (for the same generator $i$) if $|I|<|I'|$ and we leave pairs with $i\neq j$ or with $|I|=|I'|$ unconnected. We define the dominant elements $a_{i,I}$ (or rather, dominant places $(i,I)$) to be the maximal elements with respect to this partial order, and subdominant elements to be the elements covered by maximal ones (in other words, for fixed $i$, subdominant places are the ones with $|I|=|I_{\max}|-1$).

\smallskip

The next observation follows from the fact that the morphisms in question are algebra automorphisms.

\begin{lem}
Functions $G_{i,I}$ corresponding to dominant places $(i,I)$ are identities:
\begin{equation*}
G_{i,I}(a_{k,K})=a_{i,I}.
\end{equation*}
\end{lem}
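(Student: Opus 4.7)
The plan is to trace the dominant coefficients of $f(\zeta_i) = \sum_I a_{i,I}\,\zeta^I$ through the two legs of the loop defining $\Phi$ and to show that the top-degree coefficients are transported to themselves.

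First I would analyze the positive-characteristic side. For an automorphism $f_p$ of $A_{n,\mathbb{F}_p}$ with $f_p(\zeta_i) = \sum_I a_{i,I,p}\,\zeta^I$ of total $\zeta$-degree $N_i$, the restriction to the center is $f_p^c(\xi_i) = f_p(\zeta_i)^p$. Expanding this $p$-th power by means of Jacobson's identity and using that every commutator $[\zeta_i,\zeta_j]$ in the Weyl algebra is a central scalar, one obtains
\[
f_p(\zeta_i)^p = \sum_{|I|=N_i} a_{i,I,p}^p\,\zeta^{pI} + (\text{strictly lower Bernstein-degree terms}),
\]
and the identification $\zeta^{pI} = \xi^I$ then shows that the top $\xi$-degree part of $f_p^c(\xi_i)$ equals $\sum_{|I|=N_i} a_{i,I,p}^p\,\xi^I$.

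Passing back to characteristic zero through $\alpha_{[p]}^{-1}$ converts the dominant coefficient into $\Theta_{[p]}(a_{i,I})$, but this is precisely cancelled by the shift $\Theta_{[p]}^{-1}$ built into the definition of $\phi_{[p]}$. Hence the dominant $\xi$-coefficient of $\phi_{[p]}(f)(\xi_i)$ is literally $a_{i,I}$ for every $I$ with $|I|=N_i$. Running the same argument with $[p']$ in place of $[p]$, the inverse $\phi_{[p']}^{-1}$ likewise takes a symplectomorphism whose dominant $\xi$-coefficients are $b_{i,I}$ back to an automorphism whose dominant $\zeta$-coefficients are $b_{i,I}$. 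The vertical isomorphisms in the commutative square that compare the two Steinitz presentations of $\mathbb{C}$ act by a renaming of the base field and respect the Bernstein filtration without mixing coefficients at distinct filtration levels. Composing along the loop $\Phi$, the dominant coefficients of $f$ are reproduced verbatim in $\Phi(f)$, which is exactly the assertion $G_{i,I}(a_{k,K}) = a_{i,I}$ for every dominant place $(i,I)$.

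The main obstacle is to make the Jacobson-formula step rigorous: one must confirm that in $A_{n,\mathbb{F}_p}$ the correction terms $\Lambda_p(a,b)$ in $(a+b)^p = a^p + b^p + \Lambda_p(a,b)$ strictly lower the Bernstein filtration when applied to the Bernstein-homogeneous top components of $f_p(\zeta_i)$. This reduces to the observation that every iterated commutator appearing in $\Lambda_p$ contains at least one bracket $[\zeta_i,\zeta_j]$, which is a scalar of Bernstein degree zero and therefore drops the total filtration by at least one each time it occurs, yielding the required strict decrease from Bernstein degree $pN_i$ in the leading monomial to at most $pN_i - 1$ in the Jacobson correction.
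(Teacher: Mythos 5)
Your proposal is correct and follows essentially the same route as the paper: the key step in both is that the top-degree coefficient of $f_p^c(\xi_i)=f_p(\zeta_i)^p$ equals $a_{i,I,p}^p$ (the paper cites "commutation relations," you supply the Jacobson/Bernstein-filtration reasoning), after which the inverse-Frobenius shift in the definition of $\phi_{[p]}$ takes the $p$-th root and shows the dominant coefficient is independent of $[p]$. You have merely made explicit the filtration-drop argument the paper leaves implicit.
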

\begin{proof}
Indeed, it follows from the commutation relations that for any $i=1,\ldots,2n$ and $f_p$, $p=p_m,\; m\in A_f\in\mathcal{U}$, the highest-order term in $f_p^c(\xi_i)=f_p(\zeta_i^p)=f_p(\zeta_i)^p$ has the coefficient $a_{i,I,p}^p$. The shift by the inverse Frobenius then acts as the $p$-th root on the dominant place, so that we deduce that the latter is independent of the choice of $[p]$.
\end{proof}

\medskip

Let us now fix $N\geq 1$ and consider
\begin{equation*}
\Phi_N:\Aut^{\leq N} A_{n,\mathbb{C}}\rightarrow \Aut^{\leq N} A_{n,\mathbb{C}}
\end{equation*}
-- the restriction of $\Phi$ to the subvariety $\Aut^{\leq N} A_{n,\mathbb{C}}$, which is well defined by the above lemma. The morphism corresponds to an endomorphism of the ring of functions
\begin{equation*}
\Phi_N^*:\mathcal{O}(\Aut^{\leq N} A_{n,\mathbb{C}})\rightarrow \mathcal{O}(\Aut^{\leq N} A_{n,\mathbb{C}})
\end{equation*}

Let us take a closer look at the behavior of $\Phi_N$ (and of $\Phi^*_N$, which is essentially the same up to an inversion), specifically at how $\Phi_N$ affects one-dimensional subvarieties of automorphisms. Let $\mathcal{X}_N$ be the set of all algebraic curves of automorphisms in $\Aut^{\leq N} A_{n,\mathbb{C}}$; by virtue of Lemma 3.3 we may without loss of generality consider the subset of all curves with fixed dominant places -- we denote such a subset by $\mathcal{X}_N'$, and, for that same matter, the subsets $\mathcal{X}_N^{(k)}$ of curves with fixed places of the form $(i,I')$, which are away from a dominant place by a path of length at most $(k-1)$. In particular one has $\mathcal{X}_N'=\mathcal{X}_N^{(1)}$.

The morphism $\Phi_N$ yields a map
\begin{equation*}
\tilde{\Phi}_N:\mathcal{X}_N\rightarrow \mathcal{X}_N
\end{equation*}
and its restrictions
\begin{equation*}
\tilde{\Phi}_N^{(k)}:\mathcal{X}_N^{(k)}\rightarrow \mathcal{X}_N.
\end{equation*}
Our immediate goal is to prove that for all attainable $k$ we have
\begin{equation*}
\tilde{\Phi}_N^{(k)}:\mathcal{X}_N^{(k)}\rightarrow \mathcal{X}_N^{(k)},
\end{equation*}
i.e. the map $\Phi_N$ preserves the terms corresponding to non-trivial differential monomials.

In spite of minor abuse of language, we will call the highest non-constant terms of a curve in $\mathcal{X}_N^{(k)}$ dominant, although they cease to be so when that same curve is regarded as an element of $\mathcal{X}_N$.

Let $\mathcal{A}\in\mathcal{X}_N$ be an algebraic curve in general position. Coordinate-wise $\mathcal{A}$ answers to a set $(a_{i,I}(\tau))$ of coefficients parameterized by an indeterminate. By Lemma 3.3, $\Phi_N$ leaves the (coefficients corresponding to) dominant places of this curve unchanged, so we may well set $\mathcal{A}\in\mathcal{X}_N^{(1)}$. In fact, it is easily seen that the subdominant terms are not affected by $\Phi_N$ either, thanks to the commutation relations that define the Weyl algebra: for every $p$ participating in the ultraproduct decomposition, after one raises to the $p$-th power one should perform a reordering within the monomials -- a procedure which degrades the cardinality $|I|$ by an even number. Therefore, nothing contributes to the image of any subdominant term other than that subdominant term itself, which therefore is fixed under $\Phi_N$. We are then to consider the image

\begin{equation*}
\tilde{\Phi}_N^{(2)}(\mathcal{A})\in \mathcal{X}_N^{(2)}.
\end{equation*}

Again, given a positive characteristic $p$ within the ultraproduct decomposition, suppose the curve $\mathcal{A}$ (or rather its component answering to the chosen element $p$) has a number of poles attained on dominant\footnote{With respect to $\mathcal{X}_N^{(2)}$, i.e. the highest terms that actually change - see above where we specify this convention.} terms. Let us pick among these poles the one of the highest order $k$, and let $(i_0,I_0)$ be its place. By definition of an automorphism of Weyl algebra as a set of coefficients, the number $i_0$ does not actually carry any meaningful data, so that we are left with a pair $(k,|I_0|)$. As we can see, this pair is maximal from two different viewpoints; in fact, the pair represents a vertex of a Newton polygon taken over the appropriate field, with the discrete valuation given by $|I|$. The coordinate function $a_{i_0,I_0}$ corresponding to this pole admits a decomposition

\begin{equation*}
a_{i_0,I_0}=\frac{a_{-k}}{t^k}+\cdots,
\end{equation*}
with $t$ a local parameter. Acting upon this curve by the morphism $\Phi_N$ amounts to two steps: first, we raise everything to the $p$-th power and then assemble the components within the ultraproduct decomposition, then we take the preimage, which is essentially the same as taking the $p'$-root, with respect to a different ultraproduct decomposition. The order of the maximal pole is then multiplied by an integer during the first step and divided \emph{by the same integer} during the second one. By maximality, there are no other terms that might contribute to the resulting place in $\tilde{\Phi}_N^{(2)}(\mathcal{A})$. It therefore does not change under $\Phi_N$.

We may process the rest of the dominant (with respect to $\mathcal{X}_N^{(2)}$) terms similarly: indeed, it suffices to pick a different curve in general position. We then move down to $\mathcal{X}_N^{(k)}$ with higher $k$ and argue similarly.

\smallskip

After we have exhausted the possibilities with non-constant terms, we arrive at the conclusion that all that $\Phi_N$ does is permute the irreducible components of $\Aut^{\leq N} A_{n,\mathbb{C}}$. That in turn implies the existence of a positive integer $l$ such that

\begin{equation*}
\Phi_N^l=\text{Id}.
\end{equation*}

\medskip

In fact, the preceding argument gives us more than just the observation that $\Phi_N$ is unipotent.  Let $\Phi_{N,M}^*$ denote the linear map of finite-dimensional vector spaces obtained by restricting $\Phi_{N}^*$ to regular functions of total degree less than or equal to $M$. Then the following proposition holds.

\begin{prop}
If $\lambda$ is an eigenvalue of $\Phi_{N,M}^*$, then $\lambda^k=1$ for some integer $k$.
\end{prop}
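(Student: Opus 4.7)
The plan is to leverage the finite-order statement that has just been established in the preceding analysis and translate it into a statement about the linear operator $\Phi_{N,M}^*$. The argument immediately before the proposition produced a positive integer $l$ such that $\Phi_N^l=\mathrm{Id}$ as a morphism of the variety $\Aut^{\leq N}(A_{n,\mathbb{C}})$, obtained by checking that $\Phi_N$ fixes all dominant, subdominant, and more generally all non-constant terms of a curve in general position, and therefore can only permute the irreducible components of $\Aut^{\leq N}$.

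First, I would observe that the pullback $\Phi_N^*$ on the ring of regular functions $\mathcal{O}(\Aut^{\leq N}(A_{n,\mathbb{C}}))$ inherits this finite order: $(\Phi_N^*)^l=\mathrm{Id}$. Next, I would verify that $\Phi_N^*$ respects the filtration by total degree, so that its restriction $\Phi_{N,M}^*$ to the finite-dimensional subspace $\mathcal{O}^{\leq M}$ of regular functions of total degree at most $M$ is an honest linear endomorphism of a finite-dimensional vector space. This is the delicate bookkeeping step and constitutes the main technical obstacle: one must argue that the polynomial expressions $G_{i,I}(a_{k,K})$ describing $\Phi_N$ in coordinates, together with the partial-order analysis on places $(i,I)$ carried out above (identity on dominant places, preservation on subdominant ones, and Newton-polygon-type preservation of pole orders on higher levels), forces each $G_{i,I}$ to be a polynomial of bounded degree in the $a_{k,K}$'s, so that pullback of a degree-$M$ function lands in degree at most $M$ times a fixed constant. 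A careful look shows that the constant is $1$, because the $p$-th-power-then-$p'$-th-root step of the construction exactly cancels the scaling of valuations, as exploited in the Newton-polygon argument.

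Once the restriction $\Phi_{N,M}^*\colon \mathcal{O}^{\leq M}\to \mathcal{O}^{\leq M}$ is well defined, the conclusion is immediate. From $(\Phi_N^*)^l=\mathrm{Id}$ we get $(\Phi_{N,M}^*)^l=\mathrm{Id}$ as a linear operator on a finite-dimensional space, so the minimal polynomial of $\Phi_{N,M}^*$ divides $x^l-1$. Consequently every eigenvalue $\lambda$ of $\Phi_{N,M}^*$ satisfies $\lambda^l=1$, and it suffices to take $k=l$ in the statement of the proposition. Alternatively, and without reference to the global order $l$, one may proceed spectrally: each eigenvector $v\in\mathcal{O}^{\leq M}$ with $\Phi_{N,M}^*v=\lambda v$ yields $v=(\Phi_{N,M}^*)^l v=\lambda^l v$, forcing $\lambda^l=1$.

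The main obstacle, as indicated, is the filtration-preservation claim; everything else is a formal consequence of finite order. Should it turn out that the natural grading is not strictly preserved (only a filtration is), one would instead work with the associated graded operator $\mathrm{gr}(\Phi_{N,M}^*)$, whose eigenvalues agree with those of $\Phi_{N,M}^*$, and conclude in exactly the same way. Either route delivers the sought conclusion that all eigenvalues of $\Phi_{N,M}^*$ are roots of unity.
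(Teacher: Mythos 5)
Your proof is correct, but it takes a genuinely different route from the paper's. The paper's own proof is a brief argument by contradiction via curve geometry: it asserts that a non--root-of-unity eigenvalue $\lambda_0$ would produce an ``exceptional curve whose singularity changes under $\Phi_N$,'' contradicting the earlier invariance analysis (the remark about coefficients being products of normalization coordinates is meant to link eigenvalues of $\Phi_{N,M}^*$ to the singularity data). You instead deduce the result purely by linear algebra from the statement $\Phi_N^l=\mathrm{Id}$ established immediately beforehand: pull back to $(\Phi_N^*)^l=\mathrm{Id}$, restrict to the finite-dimensional degree filtration, and read off that the minimal polynomial divides $x^l-1$. Given that the paper explicitly records $\Phi_N^l=\mathrm{Id}$, your deduction is immediate and arguably cleaner; the paper's singularity argument appears to be an independent (and more opaque) verification, and the puzzling remark that ``the preceding argument gives us more than just the observation that $\Phi_N$ is unipotent'' suggests the author did not want to lean on the finite-order claim directly. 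Your approach buys transparency and a formal proof; the paper's approach ties the eigenvalue statement directly to the geometric invariants (pole orders, Newton polygons) that drove the whole analysis, making the proposition a self-contained check rather than a corollary. The one point you rightly flag as delicate---that $\Phi_N^*$ respects the degree filtration so that $\Phi_{N,M}^*$ is a genuine endomorphism of $\mathcal{O}^{\leq M}$---is taken for granted in the paper's definition of $\Phi_{N,M}^*$, so it is fair to assume it in either argument; your fallback via the associated graded operator covers the case where only a filtration (not a grading) is preserved.
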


\begin{proof}
Indeed, should there exist $\lambda_0\neq 1$, we may find an exceptional curve whose singularity changes under $\Phi_N$, note that coefficients are products of normalization coordinates.
\end{proof}




\subsubsection{\bf Discussion}\label{ds}
The investigation of decomposition of polynomial algebra-related objects into ultraproducts over the prime numbers $\mathbb{P}$ leads to a problem of independence of the choice of infinite prime. In the case of the Tsuchimoto -- Belov -- Kontsevich homomorphism the answer turns out to be affirmative, although there are other constructions, which are of algebraic or even polynomial nature but for which the independence fails. The reason for such arbitrary behavior has a lot to do with growth functions (in which case the situation is similar to the one described in\cite{17}, and in fact in\cite{5}, where one has a non-injective endomorphism $f_p:A_{n,\mathbb{F}_p}\rightarrow A_{n,\mathbb{F}_p}$, whose degree grows with $p$, which disallows for the construction of a naive counterexample to the Dixmier Conjecture in the ultralimit). It is, in our view, worthwhile to study such behavior in greater detail.

\section{Algorithmic aspects of algebraic geometry}\label{aloritm}
The section contains two subsections: the first one is devoted to noncommutative Finite Gr\"obner basis issues and the second one is devoted to algorithmic inclusion undecidability.
\subsection{Finite Gr\"obner basis algebras with unsolvable nilpotency problem and zero divisors problem}
\label{sec31}
\subsubsection{\bf The sketch of construction}\label{sk}

Let $A$ be an algebra over a field $K$.

The set of all words in the alphabet $\{a_1,\dots , a_N\}$ is a semigroup. The main idea of the construction is a
realization of a universal Turing machine in this semigroup. We use the universal Turing
machine constructed by Marvin Minsky in \cite{Minsky}. This machine has $7$ states and $4$-color
tape. The machine can be completely defined by $28$ instructions.

Note that $27$ of them have a form
$$(i,j) \rightarrow (L,q(i,j),p(i,j)) \text{ or } (i,j) \rightarrow (R,q(i,j),p(i,j)),$$
where $0\leq i \leq 6$ is the current machine state, $0\leq j\leq 3$ is the current cell color,
$L$ or $R$ (left or right) is the direction of a head moving after execution of the current
instruction, $q(i,j)$ is the state after current instruction, $p(i,j)$ is the new color of
the current cell.

Thus, the instruction $(2,3) \rightarrow (L,3,1)$ means the following: ``If the color of the
current cell is $3$ and the state is $2$, then the cell changes the color to $1$, the head moves
one cell to the left, the machine changes the state to $3$.

The last instruction is $(4,3) \rightarrow\ $STOP. Hence, if the machine is in state $4$ and
the current cell has color $3$, then the machine halts.

\medskip

\subsubsection*{\bf Letters.}\label{let}

By $Q_i$, $0\leq i\leq 6$ denote the current state of the machine. By $P_j$, $0\leq j\leq 3$
denote the color of the current cell.

The action of the machine depends on the current state $Q_i$ and current cell color $P_j$.
Thus every pair $Q_i$ and $P_j$ corresponds to one instruction of the machine.

The instructions moving the head to the left (right) are called {\it left} ({\it right}) ones.
Therefore there are {\it left pairs} $(i,j)$ for the left instructions, {\it right pairs} for
the right ones and instruction STOP for the pair $(4,3)$.


All cells with nonzero color are said to be {\it non-empty cells}.
We shall use letters $a_1$, $a_2$, $a_3$ for nonzero colors and letter $a_0$ for color zero.
Also, we use $R$ for edges of colored area.
Hence, the word $Ra_{u_1}a_{u_2}\dots a_{u_k}Q_iP_j a_{v_1}a_{v_2}\dots a_{v_l}R$ presents a full state of Turing machine.

We model head moving and cell painting using computations with powers of $a_i$ (cells) and $P_i$ and $Q_i$
(current cell and state of the machine's head).

\medskip


We use the universal Turing machine constructed by Minsky. This machine is defined by the following instructions:

\medskip

$(0,0)\rightarrow (L,4,1) \ (0,1)\rightarrow (L,1,3) \ (0,2)\rightarrow (R,0,0) \ (0,3)\rightarrow (R,0,1)$

$(1,0)\rightarrow (L,1,2) \ (1,1)\rightarrow (L,1,3) \ (1,2)\rightarrow (R,0,0) \ (1,3)\rightarrow (L,1,3)$

$(2,0)\rightarrow (R,2,2) \ (2,1)\rightarrow (R,2,1) \ (2,2)\rightarrow (R,2,0) \ (2,3)\rightarrow (L,4,1)$

$(3,0)\rightarrow (R,3,2) \ (3,1)\rightarrow (R,3,1) \ (3,2)\rightarrow (R,3,0) \ (3,3)\rightarrow (L,4,0)$

$(4,0)\rightarrow (L,5,2) \ (4,1)\rightarrow (L,4,1) \ (4,2)\rightarrow (L,4,0) \ (4,3)\rightarrow $ STOP

$(5,0)\rightarrow (L,5,2) \ (5,1)\rightarrow (L,5,1) \ (5,2)\rightarrow (L,6,2) \ (5,3)\rightarrow (R,2,1)$

$(6,0)\rightarrow (R,0,3) \ (6,1)\rightarrow (R,6,3) \ (6,2)\rightarrow (R,6,2) \ (6,3)\rightarrow (R,3,1)$

\medskip

We use the following alphabet:
$$\{t, \, a_0, \dots  a_3, \, Q_0, \dots Q_6, \, P_0 \dots P_3, \,  R\}$$

For every pair except $(4,3)$ the following functions are defined:
$q(i,j)$ is a new state, $p(i,j)$ is a new color of the current cell (the head leaves it).
\subsubsection{\bf Defining relations for the nilpotency question}\label{df1}

Consider the following defining relations:

\begin{eqnarray}
   &
tRa_l=Rta_l;  \text{\ \  $0\leq l\leq 3$}                           \label{tt1}  \\ &
ta_lR=a_lRt;  \text{\ \  $0\leq l\leq 3$}                           \label{tt1b}  \\ &
ta_ka_j=a_kta_j;                           \quad \text{ $0\leq k,j\leq 3$}    \label{tt2} \\ &
ta_kQ_iP_j=Q_{q(i,j)}P_kta_{p(i,j)}; \text{for left pairs $(i,j)$ and $0\leq k\leq 3$} \label{tt3} \\ &
tRQ_iP_j=RQ_{q(i,j)}P_0ta_{p(i,j)}; \text{for left pairs $(i,j)$ and $0\leq k\leq 3$} \label{tt5} \\ &
ta_lQ_iP_ja_ka_n=a_la_{p(i,j)}Q_{q(i,j)}P_kta_n;\text{for right pairs $(i,j)$ and $0\leq k\leq 3$} \label{tt4} \\ &
ta_lQ_iP_ja_kR=a_la_{p(i,j)}Q_{q(i,j)}P_kRt;\text{for right pairs $(i,j)$ and $0\leq k\leq 3$} \label{tt4r} \\ &
tRQ_iP_ja_ka_n=Ra_{p(i,j)}Q_{q(i,j)}P_kta_n;\text{for right pairs $(i,j)$ and $0\leq k\leq 3$} \label{tt4b} \\ &
tRQ_iP_ja_kR=Ra_{p(i,j)}Q_{q(i,j)}P_kRt;\text{for right pairs $(i,j)$ and $0\leq k\leq 3$} \label{tt4ar} \\ &
ta_lQ_iP_jR=a_la_{p(i,j)}Q_{q(i,j)}P_0Rt; \text{for right pairs $(i,j)$ and $0\leq l\leq 3$} \label{tt6}  \\ &
tRQ_iP_jR=Ra_{p(i,j)}Q_{q(i,j)}P_0Rt; \text{for right pairs $(i,j)$} \label{tt6b}  \\ &
Q_4P_3=0.                                  \label{tt7}
\end{eqnarray}
The relations \eqref{tt1} and \eqref{tt2} are used to move $t$ from the left edge to the last letter $a_l$ standing before
$Q_iP_j$ which represent the head of the machine.  The relations \eqref{tt3}--\eqref{tt6b}
represent the computation process. The relation \eqref{tt1b} is used to move $t$ through the finishing letter $R$.

Finally, the relation \eqref{tt7} halts the machine.

Let us call  $tRa_{u_1}a_{u_2}\dots a_{u_k}Q_iP_j a_{v_1}a_{v_2}\dots a_{v_l}R$ {\it the main word}.

\begin{thm} \label{th1}
Consider an algebra $A$ presented by the defining
relations \eqref{tt1}--\eqref{tt7}.
The word $tRUQ_iP_jVR$ is nilpotent in $A$ if and only if machine $M(i,j,U,V)$ halts.
\end{thm}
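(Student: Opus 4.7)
The plan is to prove the two implications separately, using the fact that left-multiplication by $t$ realizes one transition step of the Turing machine $M$ on a configuration word. Write $u^{(k)} = RU^{(k)}Q_{i_k}P_{j_k}V^{(k)}R$ for the configuration of $M(i,j,U,V)$ after $k$ steps, so that $u^{(0)} = u = RUQ_iP_jVR$ and $w = tu$.

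The first step is a simulation lemma: whenever $(i_k,j_k)\neq(4,3)$, the identity $t\cdot u^{(k)} = u^{(k+1)}\cdot t$ holds in $A$. Its proof is a direct left-to-right scan: push $t$ past the initial $R$ using \eqref{tt1}, commute it through each letter $a_{u_r}$ of $U^{(k)}$ using \eqref{tt2}, and when it meets the head marker $Q_{i_k}P_{j_k}$ apply the unique applicable rule among \eqref{tt3}--\eqref{tt6b}; the choice is determined by whether the instruction is left or right, by the letter immediately to the left of $Q$ (either $a_l$ or $R$), and, for right instructions, by what stands immediately to the right of $P$ (either $a_ka_n$, $a_kR$, or $R$). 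Then finish the scan past $V^{(k)}R$ using \eqref{tt2} and \eqref{tt1b}. Iterating the lemma and processing the $n$ copies of $t$ in $(tu)^n$ from right to left, one obtains
\[
w^n = u^{(1)} u^{(2)} \cdots u^{(n)} t^n \qquad \text{in } A,
\]
valid as long as none of $u^{(0)},\dots,u^{(n-1)}$ has head state $(4,3)$.

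The halting direction is then immediate. If $M(i,j,U,V)$ halts after exactly $N$ steps, meaning $(i_N,j_N)=(4,3)$, the simulation runs cleanly for the first $N$ transitions and the factor $u^{(N)}$ contains the substring $Q_4P_3$, which is zero by \eqref{tt7}. Hence $w^N=0$, so $w$ is nilpotent.

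The harder direction is the converse: when $M$ never halts, one must show that the expression $u^{(1)}\cdots u^{(n)} t^n$ is actually nonzero in $A$ for every $n$. The plan is to orient \eqref{tt1}--\eqref{tt6b} as a terminating confluent rewriting system, with \eqref{tt7} adjoined as a zero rule, so that every element of $A$ has a unique normal form. Termination is immediate, since every rule strictly advances some occurrence of $t$ one letter to the right. For confluence the key observation is that every left-hand side of \eqref{tt1}--\eqref{tt6b} begins with $t$ and contains no other occurrence of $t$, so no suffix of one left-hand side can match a prefix of another; coupled with the fact that no left-hand side is a substring of another (which reduces to a finite case check distinguishing left pairs, right pairs, and the halting pair $(4,3)$ by the classification of the embedded $(i,j)$), this rules out nontrivial critical pairs, yielding local confluence, whence confluence follows by Newman's lemma. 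The unique normal form of $w^n$ is therefore $u^{(1)}\cdots u^{(n)} t^n$, and under the non-halting hypothesis no factor contains $Q_4P_3$, so this monomial does not collapse to zero. The main technical obstacle in this strategy is precisely the finite critical-pair Gr\"obner--Shirshov check verifying that no two left-hand sides overlap nontrivially, so that the simulation-derived expression is truly canonical.
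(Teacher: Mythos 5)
Your overall architecture is sound and matches the intended route (the paper only sketches it here and defers the details to \cite{IPM}): a simulation lemma $t\,u^{(k)}=u^{(k+1)}t$ for $(i_k,j_k)\neq(4,3)$, iterated to give $w^n=u^{(1)}\cdots u^{(n)}t^n$; the halting direction then falls out from \eqref{tt7}; and for the converse you set up the relations as a rewriting system, verify confluence by the absence of overlaps (all left-hand sides of \eqref{tt1}--\eqref{tt6b} start with $t$ and contain no other $t$, while $Q_4P_3$ cannot factor into any of them because $(4,3)$ is neither a left nor a right pair), and read off that $u^{(1)}\cdots u^{(n)}t^n$ is a nonzero normal form. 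All of that is correct.

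There is, however, a genuine gap in the termination step. Your claim ``Termination is immediate, since every rule strictly advances some occurrence of $t$ one letter to the right'' is not an argument, because three of the rules, \eqref{tt5}, \eqref{tt6} and \eqref{tt6b}, strictly \emph{increase} the length of the word (the Turing machine extends its tape), and some rules move $t$ by several letters at once. Once words can grow, ``$t$ moves right'' is not by itself a well-founded measure: in a word with several $t$'s, applying a length-increasing rule at one $t$ increases the number of letters to the right of every $t$ sitting further left, so the naive potential $\sum_t(\text{letters to the right of }t)$ need not decrease, and degree-lexicographic-type orders go the wrong way since the right-hand side of \eqref{tt5} is longer than its left-hand side. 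Without a compatible well-founded semigroup order Bergman's diamond lemma does not apply, and uniqueness of normal forms (hence the linear independence of $u^{(1)}\cdots u^{(n)}t^n$ from $0$) is not yet established.

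The fix is not hard but must be said. Observe that $\#t$, $\#Q$, $\#R$, $\#P$ are invariant under all of \eqref{tt1}--\eqref{tt6b}, and only $\#a$ can grow. Let $N'$ be the number of ordered pairs $(t,X)$ with $X\in\{Q_i,R,P_j\}$ and the $t$ to the left of $X$, and let $N_3$ be the analogous count with $X\in\{a_0,\dots,a_3\}$. Every rule except \eqref{tt2} moves a $t$ rightward past at least one $Q$, $R$ or $P$ while the new letter it may create is an $a$; so $N'$ strictly decreases under \eqref{tt1}, \eqref{tt1b}, \eqref{tt3}--\eqref{tt6b}, is preserved by \eqref{tt2}, and \eqref{tt2} (which neither creates nor destroys letters) strictly decreases $N_3$. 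Ordering monomials by the invariant letter counts and then lexicographically by $(N',N_3)$ (insisting, in the tie-break, on equal $\#a$ so that the cross-terms cancel and the order is a semigroup order) gives a well-founded compatible partial order, whence the diamond lemma and your confluence check finish the converse. This is the piece of the argument that actually has to be written down; the rest of the proposal is correct.
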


Actually we can prove that multiplication on the left by an element $t$ leads to the transition to the next state of the machine.

\subsubsection{\bf Defining relations for a zero divisors question}\label{df2}

We use the following alphabet:
$$\Psi=\{t, \, s, \, a_0, \dots  a_3, \, Q_0, \dots Q_6, \, P_0 \dots P_3, \,  L, \,  R \}.$$

For every pair except $(4,3)$ the following functions are defined:
$q(i,j)$ is a new state, $p(i,j)$ is a new color of the current cell (the head leaves it).


Consider the following defining relations:

\begin{eqnarray}
   &
tLa_k=Lta_k; \text{ $0\leq k\leq 3$}                             \label{td1}  \\ &
ta_ka_l=a_kta_l;                           \quad \text{ $0\leq k,l\leq 3$}    \label{td2} \\ &
sR=Rs;         \label{td9}  \\ &
sa_k=a_ks;    \text{ $0\leq k\leq 3$}  \label{td8} \\ &
ta_kQ_iP_j=Q_{q(i,j)}P_ka_{p(i,j)}s; \text{for left pairs $(i,j)$ and $0\leq k\leq 3$} \label{td3} \\ &
tLQ_iP_j=LQ_{q(i,j)}P_0a_{p(i,j)}s; \text{for left pairs $(i,j)$} \label{td5} \\ &
ta_lQ_iP_ja_k=a_la_{p(i,j)}Q_{q(i,j)}P_ks;\text{for right pairs $(i,j)$ and $0\leq k,l\leq 3$} \label{td4} \\ &
tLQ_iP_ja_k=La_{p(i,j)}Q_{q(i,j)}P_ks;\text{for right pairs $(i,j)$ and $0\leq k\leq 3$} \label{td4b} \\ &
ta_lQ_iP_jR=a_la_{p(i,j)}Q_{q(i,j)}P_0Rs; \text{for right pairs $(i,j)$ and  $0\leq l\leq 3$ } \label{td6}  \\ &
tLQ_iP_jR=La_{p(i,j)}Q_{q(i,j)}P_0Rs; \text{for right pairs $(i,j)$} \label{td6b}  \\ &
Q_4P_3=0;                                  \label{td7}
\end{eqnarray}
The relations \eqref{td1}--\eqref{td2} are used to move $t$ from the left edge to the letters
$Q_i$, $P_j$ which present the head of the machine.
The relations \eqref{td9}--\eqref{td8} are used to move $s$ from the letter $Q_i$, $P_j$ to the right edge.
The relations \eqref{td3}--\eqref{td6} represent the computation process. Here we use relations of the form $tU=Vs$.

Finally, the relation \eqref{td7} halts the machine.

\subsubsection{\bf Zero divisors and machine halt}\label{zd}

Let us call  $La_{u_1}a_{u_2}\dots a_{u_k}Q_iP_j a_{v_1}a_{v_2}\dots a_{v_l}R$ {\it the main word}.

\begin{thm} \label{th2}
The machine halts if and only if the main word is a zero divisor in
the algebra presented by the defining relations \eqref{td1}--\eqref{td7}.
\end{thm}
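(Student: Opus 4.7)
The plan is to exploit the design of the relations: left multiplication by $t$ implements exactly one step of the Turing machine, turning a $t$-token on the far left into an $s$-token that migrates to the far right, so that $n$-fold left multiplication by $t$ simulates $n$ steps of the computation on the initial configuration $W$.

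For the forward direction, I would prove by induction on $n$ that $t^{n} W = W_{n} s^{n}$ in $A$, where $W_{n}$ is the main word encoding the machine configuration after $n$ steps starting from $W$. The inductive step slides the outermost $t$ rightward through the tape prefix by repeated use of \eqref{td1}--\eqref{td2}, fires exactly one of \eqref{td3}--\eqref{td6b} when $t$ meets the head $Q_{i}P_{j}$, and then commutes the newly produced $s$ past the remaining tape cells and the boundary $R$ using \eqref{td8}--\eqref{td9} until it joins the block of $s$'s on the right. If the machine halts after $N$ steps, then by definition $W_{N}$ contains the pattern $Q_{4}P_{3}$, so \eqref{td7} gives $W_{N}=0$, hence $t^{N} W = 0$. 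Since $t^{N}$ is nonzero in $A$ (no relation shortens a pure $t$-monomial, as can be checked by a trivial length-grading argument in which $t$ has strictly positive degree and every rewrite is length-preserving outside \eqref{td7}), this exhibits $W$ as a zero divisor.

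For the converse, suppose $X\neq 0$ satisfies either $XW=0$ or $WX=0$. I would first rule out the right-multiplication case: every rule except \eqref{td7} transports $t$'s and $s$'s strictly to the right, so any sequence of rewrites applied to $WX$ leaves the single head $Q_{i}P_{j}$ inside $W$ untouched, and the only way to produce the pattern $Q_{4}P_{3}$ is inside the $X$-factor, which would already force $X=0$. Thus it suffices to treat $XW=0$. Next, I would argue that the rewriting system consisting of \eqref{td1}--\eqref{td6b} oriented left-to-right is terminating (via a suitable monomial order weighting positions of $t$- and $s$-letters) and confluent by checking critical overlaps, the determinism of the machine's transition function being the essential ingredient that makes each overlap join uniquely. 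This yields a Gr\"obner--Shirshov basis, and hence a normal-form $K$-basis, for $A$.

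Writing $X = \sum_{k} c_{k} x_{k}$ in terms of distinct normal monomials, the equation $XW=0$ becomes $\sum_{k} c_{k}\,\mathrm{NF}(x_{k} W) = 0$. The crux of the proof is to show that no cancellation among distinct $\mathrm{NF}(x_{k} W)$ can occur: distinct normal monomials $x_{k}$ produce distinct (or individually zero) normal forms upon right-multiplication by $W$, because the simulation triggered by the $t$-letters in $x_{k}$ is deterministic and head-centred. Hence every $x_{k}$ with $c_{k}\neq 0$ must satisfy $x_{k} W = 0$ in $A$, and the only way for a monomial product $x_{k} W$ to vanish is through an eventual application of \eqref{td7}, which requires the simulated computation on $W$ to reach state $(4,3)$. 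The main obstacle is this no-cancellation claim for general normal monomials $x_{k}$: such an $x_{k}$ may contain extra copies of head letters $Q_{i}, P_{j}$ or boundary letters $L, R$ outside the canonical $t^{n}$-prefix, which could a priori create spurious head patterns or branching reductions. Handling this requires a careful case analysis in the spirit of Theorem \ref{th1}, showing via the orientation and locality of the relations that any auxiliary structure in $x_{k}$ either stays inert against the $W$-side or forces $x_{k}$ itself to be zero, so the effective dynamics of $x_{k} W$ coincide with the deterministic machine simulation from the configuration $W$.
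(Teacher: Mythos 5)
The paper itself does not prove this theorem: it lists the relations \eqref{td1}--\eqref{td7}, remarks that left multiplication by $t$ drives one machine step, and then defers the entire argument to \cite{IPM}. So there is no in-paper proof to compare against, and your attempt has to be judged against what a complete proof would require.

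Your forward direction is essentially right, though it contains a factual slip: the claim that ``every rewrite is length-preserving outside \eqref{td7}'' is false. The rules \eqref{td5}, \eqref{td6}, and \eqref{td6b} each lengthen the monomial by one letter, because moving the head past $L$ (respectively, past the last cell before $R$) appends a fresh tape cell. This does not damage the conclusion: $t^N$ is nonzero simply because no left-hand side of any of \eqref{td1}--\eqref{td7} occurs as a subword of $t^N$, so $t^N$ is a reduced monomial. With your induction $t^nW = W_n s^n$ this yields $t^NW = 0$, $t^N\neq 0$ when the machine halts at step $N$, as needed. Your elimination of the right-multiplication case $WX=0$ is also in the right spirit, and can in fact be closed cleanly: as long as the head of $W$ is not already $Q_4P_3$, no suffix of $W$ is a proper prefix of a left-hand side, so for any normal monomial $x_k$ the product $Wx_k$ is already a normal form, and $WX = \sum_k c_k\, Wx_k$ is a normal-form expansion that vanishes only when $X=0$.

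The genuine gap is exactly where you flag it and it is the substance of the theorem. For $XW=0$ you invoke a no-cancellation claim --- that distinct normal monomials $x_k$ produce distinct (or individually zero) normal forms $\mathrm{NF}(x_kW)$ --- but you supply no invariant, confluence lemma, or case analysis that actually excludes collisions when $x_k$ contains stray $Q_iP_j$, $L$, $R$, or blocked $t$'s. Note also that even setting up the Gröbner--Shirshov framework requires some care: a plain degree-lexicographic order would orient \eqref{td5}, \eqref{td6}, \eqref{td6b} backwards (their right-hand sides are longer), so one must choose a weighted order in which $t$ dominates before ``normal form'' even makes sense. Until the order is fixed, the basis is shown to be closed under compositions, and the cancellation analysis is carried out, your proposal for the harder implication is a plan rather than a proof; that plan matches in outline what \cite{IPM} must contain, but the decisive technical content is not here.
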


\medskip

\begin{rem*}
 We can consider two semigroups corresponding to our algebras:
 in both algebras each relation is written as an equality of two monomials.
 Therefore the same alphabets together with the same sets of relations define semigroups.
 In both semigroups the equality problem is algorithmically solvable, since it is solvable in algebras.
 However in the first semigroup a nilpotency problem is algorithmically unsolvable,
 and in the second semigroup a zero divisor problem is algorithmically unsolvable.
\end{rem*}

The entire proofs can be found at \cite{IPM}.

\subsection{On the algorithmic undecidability of the embeddability problem for algebraic varieties over a field of characteristic zero}
\label{sec32}

\subsubsection{\bf The Case of Real Numbers}\label{sec-real}
By a \textit{Matiyasevich family of polynomials} we mean a family of polynomials
$$Q(\sigma_1,\dots,\sigma_{\tau},x_1,\dots,x_s)$$
for which the existence of a solution for a given set of parameters of the polynomial is undecidable. As was established in \cite{Mat93}, such a polynomial exists.

Consider the affine space of dimension $5d+1$. We denote coordinates in this space by $X_i, Y_i, Z_i, U_i, W_i, 1\leq i\leq d$, and $T$. Consider the variety $B_{(d)}$ given by the following system of generators and relations:

\begin{equation}\label{sym-1}
\left\{\begin{array}{l}
X_{i}^{2}-\left(T^{2}-1\right) Y_{i}^{2}=1, \\
Y_{i}-(T-1) Z_{i}=V_{i}, \\
V_{i} U_{i}=1,
\end{array}\right.
\end{equation}

where $1\leq i\leq d$. For fixed $i$, the admissible values of the coordinates $X_i, Y_i, Z_i, U_i$, and $W_i$ are determined by the same value of $T$. Consider the ``short'' subsystem

\begin{equation}\label{sym-2}
\left\{\begin{array}{l}
X^{2}-\left(T^{2}-1\right) Y^{2}=1, \\
Y-(T-1) Z=V, \\
V U=1,
\end{array}\right.
\end{equation}

\begin{lem}
The following assertions hold for every solution of system (\ref{sym-2}):
\begin{enumerate}
    \item[1)] $U$ and $V$ are nonzero constants in $\mathbb{F}[t]$ $(\deg U=\deg V=0);$
    \item[2)] either $T=\pm 1$ and $X=\pm 1$ or
    $$Y=\sum_{k=0}^{[N / 2]}\left(\begin{array}{c}N \\2 k+1 \end{array}\right)\left(T^{2}-1\right)^{k} T^{N-1-2 k}$$
    for some integer $N$.
\end{enumerate}

\end{lem}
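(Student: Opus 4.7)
My plan is to dispatch assertion (1) by a unit argument and then reduce assertion (2) to a classification of norm-one units in a natural quadratic extension of $\mathbb{F}[t]$. For assertion (1), the relation $VU=1$ holds in the integral domain $\mathbb{F}[t]$, so both factors are units; but $\mathbb{F}[t]^{\times}=\mathbb{F}^{\times}$, so $U$ and $V$ are nonzero elements of $\mathbb{F}$ and in particular have degree zero. For assertion (2), the subcase $T^{2}-1=0$ in $\mathbb{F}[t]$ is immediate: $T$ is the constant $\pm 1$, and the first equation of (\ref{sym-2}) collapses to $X^{2}=1$, forcing $X=\pm 1$. This is the first alternative in the statement.

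Assume therefore $T^{2}\ne 1$, and form the quadratic extension
$$R=\mathbb{F}[t][z]\big/\bigl(z^{2}-(T^{2}-1)\bigr)$$
with the involution $\sigma:z\mapsto -z$. The element $\xi=X+Yz$ then satisfies $\xi\,\sigma(\xi)=1$, so it is a norm-one unit of $R$. The distinguished element $u:=T+z$ is itself a norm-one unit with $u^{-1}=T-z$, and an easy induction on $N$ gives
$$u^{N}=X_{N}(T)+Y_{N}(T)\,z,\qquad Y_{N}(T)=\sum_{k=0}^{\lfloor(N-1)/2\rfloor}\binom{N}{2k+1}T^{N-1-2k}(T^{2}-1)^{k},$$
which is precisely the polynomial appearing in the second alternative. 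Assertion (2) therefore reduces to the classification: every norm-one unit $\xi\in R$ equals $\pm u^{N}$ for some integer $N$.

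To obtain this classification, I would use the substitutions $T=(u+u^{-1})/2$ and $z=(u-u^{-1})/2$ (valid since $\chara\mathbb{F}\ne 2$) to identify the subring $\mathbb{F}[T,z]/(z^{2}-T^{2}+1)$ with the Laurent polynomial ring $\mathbb{F}[u,u^{-1}]$; the full ring $R$ is then a finite extension of $\mathbb{F}[u,u^{-1}]$. Using the discrete valuation $v_{\infty}$ of $R$ at the place at infinity of $u$ (where $v_{\infty}(u)=-1$ and $v_{\infty}$ is additive on products), multiply $\xi$ by a suitable power of $u$ to reduce to the case $v_{\infty}(\xi)=0$. Then $\xi$ and $\sigma(\xi)$ both lie in the $v_{\infty}\ge 0$ part of $R$, their product is the unit $1$, and $\sigma$-invariance together with the norm-one condition pin down $\xi=\pm 1$. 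Restoring the accumulated power of $u$ yields $\xi=\pm u^{N}$; extracting the $z$-coefficient of $\xi=X+Yz$ produces the claimed formula for $Y$, the overall sign being absorbed by replacing $N$ with $-N$ through $\sigma(u^{N})=u^{-N}$.

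The main obstacle is this classification step. The heuristic ``units of a Laurent polynomial ring are monomials'' is standard, but the passage from $\mathbb{F}[u,u^{-1}]$ to the finite extension $R$ that appears when $T(t)\in\mathbb{F}[t]$ is a genuinely non-constant polynomial requires care: the branching behaviour of $v_{\infty}$ on $R$, together with the places where $T(t)\in\{\pm 1\}$ (which are precisely the branch points of $R$ over $\mathbb{F}[u,u^{-1}]$), has to be analysed so that the descent on $v_{\infty}$ terminates correctly and produces only the trivial norm-one units $\pm 1$.
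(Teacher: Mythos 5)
Your proof of assertion (1) is correct: $VU=1$ holds in the integral domain $\mathbb{F}[t]$, so $U,V\in\mathbb{F}[t]^{\times}=\mathbb{F}^{\times}$. The dispatch of the subcase $T^{2}-1=0$ (so $T=\pm 1$, $X=\pm 1$) is also fine, and the observation that $(T+z)^{N}$ expands to $X_{N}+Y_{N}z$ with $Y_{N}$ the stated binomial sum is a routine induction.

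The gap is the classification step. You reduce assertion (2) to the claim that, when $T^{2}\ne 1$, every norm-one unit $\xi=X+Yz$ of $R=\mathbb{F}[t][z]/(z^{2}-T^{2}+1)$ equals $\pm(T+z)^{N}$, but you do not actually prove this; you flag it yourself as ``the main obstacle'' and only sketch a valuation-at-infinity heuristic. The paper does not prove it either: it invokes Denef \cite{Den78} for precisely this statement. So up to this point your route and the paper's coincide in spirit, but the paper outsources the hard part and you leave it open. Moreover, the specific reduction you propose is not quite adequate: the identification via $u=T+z$ is an isomorphism of the \emph{subring} $\mathbb{F}[T,z]$ with $\mathbb{F}[u,u^{-1}]$, whereas $X$ and $Y$ live in the larger ring $\mathbb{F}[t]$ and need not be polynomials in $T$ a priori; controlling those extra elements is exactly the content of the theorem. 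If you want a self-contained argument, the cleaner route is degree minimization in $t$: if $Y\ne 0$, leading-term comparison in $X^{2}-(T^{2}-1)Y^{2}=1$ forces $\deg_{t}X\ge \deg_{t}T$, the pair $(T,1)$ achieves this bound, and one checks (by a short factorization argument with $(X-cT)(X+cT)=1-c^{2}$) that any minimal solution is $(\pm T,\pm 1)$; the rest are generated by Brahmagupta composition.

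A second, sharper defect: your argument tacitly assumes $T$ is non-constant. If $T$ is a constant different from $\pm 1$, then $T^{2}-1$ is a nonzero scalar; when it is a square in $\mathbb{F}$ the ring $R$ is not even a domain, and in any case the norm-one elements $(X,Y)\in\mathbb{F}^{2}$ form a one-parameter conic over $\mathbb{F}$, not the discrete set $\{\pm u^{N}\}$, so the unit-group classification you invoke simply fails there. The paper itself peels off the constant case ($\deg T=0$, $T\ne\pm 1$) separately in the trichotomy that follows the lemma. You should do the same and restrict the norm-one-unit argument explicitly to $\deg_{t}T\ge 1$, which is the hypothesis that actually makes it work.
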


Let $R$ denote a root of the equation $R^2=T^2-1$ such that $R$ belongs to the algebraic extension $\mathbb{F}[t]$. Then the element $(T+R)^n$ can be uniquely represented in the form $X_n+R Y_n$, where $X_n$ and $Y_n$ are polynomials in $\mathbb{F}[t]$. All solutions of the equation

\begin{equation}
    X^2-(T^2-1)Y^2=1
\end{equation}
are of the form $X=\pm X_n$, $Y=\pm Y_n$ (see \cite{Den78}).

The structure of this set depends on $T$. In the case $T=\pm 1$, the first equation of the system imposes no conditions at all on $Y$. In turn, the other equation implies $Y=(T-1)Z+V$. For every choice of $V\in \mathbb{F}\setminus \{0\}$ and $Z\in \mathbb{F}[t]$, the corresponding solution exists and is unique.

\begin{lem}
If $\deg T>0$, then $V=Y \mathrm{mod} (T-1)=N$ for an integer $N$ and $Z=(Y-N)/(T-1).$ If $T=\text{const} \neq \pm 1$, then $Y$ and $Z$ are constants in $\mathbb{F}[t]$.
\end{lem}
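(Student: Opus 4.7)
The plan is to extract both parts of the statement from the previous lemma, which has already pinned down the shape of $Y$ and the fact that $V$ is a nonzero constant in $\mathbb{F}[t]$. The second equation of system (\ref{sym-2}), namely $Y - (T-1)Z = V$, is the bridge: it forces $Y \equiv V \pmod{T-1}$, and the work consists of identifying this residue with the integer $N$ appearing in the Chebyshev-type formula for $Y$, and then rearranging to obtain $Z$.

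For the first assertion, I would argue as follows. Since $\deg T > 0$, the alternative $T = \pm 1$ of the previous lemma is excluded, so $Y = \sum_{k=0}^{[N/2]} \binom{N}{2k+1}(T^2-1)^k T^{N-1-2k}$ for some integer $N$. Reducing modulo $T-1$ amounts to substituting $T = 1$ into this polynomial expression in $T$: every term with $k \geq 1$ carries a factor $(T^2-1)^k$ and therefore vanishes, leaving only the $k=0$ contribution $\binom{N}{1}\cdot 1^{N-1} = N$. Hence $Y \bmod (T-1) = N$. Combining this with $Y - (T-1)Z = V$, specialized at $T = 1$, yields $V = N$, after which the relation $Z = (Y - N)/(T-1)$ is immediate (the quotient is a genuine element of $\mathbb{F}[t]$ because $T - 1$ divides $Y - N$ in $\mathbb{Z}[T]$, hence in $\mathbb{F}[t]$).

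For the second assertion, with $T$ a constant distinct from $\pm 1$, the same formula for $Y$ shows $Y$ is a polynomial expression in $T$ with integer coefficients, hence a constant in $\mathbb{F}[t]$. Because $T - 1$ is a nonzero constant, solving $Y - (T-1)Z = V$ for $Z$ gives $Z = (Y - V)/(T-1)$, which is again a constant since $V$, $Y$, and $T-1$ all lie in $\mathbb{F}$.

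The only subtle step is the clean modular identification $V = N$. One must take care that the polynomial identity in $T$ given by the previous lemma really does survive into $\mathbb{F}[t]$ modulo $T - 1$ (this is where it matters that $\deg T > 0$, so that $T - 1 \neq 0$ in $\mathbb{F}[t]$, and that $V$ is already known to lie in $\mathbb{F} \subset \mathbb{F}[t]$, so that an equality of residues is an equality of scalars). Everything else is bookkeeping on the second Pell-type relation in (\ref{sym-2}).
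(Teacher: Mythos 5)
Your proof is correct. The paper itself states this lemma without proof (it is a survey giving only a sketch of the undecidability argument), so there is no in-text argument to compare against; but your derivation is the natural one: exclude the $T=\pm1$ branch of the preceding lemma when $\deg T>0$, reduce the Chebyshev-type formula for $Y$ modulo $T-1$ to extract the residue $N$, identify $V=N$ using that $T-1$ has positive degree (so it cannot divide a nonzero constant such as $V-N$), and then read $Z$ off the relation $Y-(T-1)Z=V$. The constant-$T$ case follows by direct substitution exactly as you say. One small sharpening: where you write that what matters is ``$T-1\neq 0$ in $\mathbb{F}[t]$,'' the precise point is that $\deg(T-1)>0$, since a nonzero constant $T-1$ would make the residue ring trivial and the inference $V=N$ would fail; this is of course guaranteed by $\deg T>0$.
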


Thus, the following three cases are possible:

\begin{enumerate}
    \item[(1)] for $\deg T>0$, to every set of integers $N_i$ there correspond polynomial solutions $Y_i$ and $X_i$ determined up to sign, as well as the constants $V_i=N_i$ and $U_i=1/V_i$, and $Z_i=(Y_i-V_i)/(T-1);$
    \item[(2)] for $\deg T=0$ and $T\neq \pm1$, there are constant solutions for $Y_i$ chosen from a given sequence; the values $X_i,Z_i,V_i$ and $U_i$ are also constants, and they are determined by the chosen values of $Y_i$;
    \item[(3)] for $T=\pm 1$, we obtain $X_i=\pm1$; for arbitrarily chosen constants $V_i$ and polynomials $Z_i$, we set $U_1=1/V_i$ and $Y_i=(T-1)Z_i+V_i.$
\end{enumerate}

So far, these considerations are valid for an arbitrary ground field $\mathbb{F}$ of characteristic zero. In the case $\mathbb{F}=\mathbb{R}$, we introduce a new coordinate $S$ by completing the main system of equations by the equation
\begin{equation}\label{sym-4}
    T=S^2+2,
\end{equation}
which ensures the impossibility of $T=\pm 1$. All common solutions of systems (\ref{sym-1}) and (\ref{sym-4}) either are constants (if $\deg T=0$, $T\neq\pm 1$) or correspond to some set of integer parameters $(N_1,\dots,N_d)$. We refer to solutions of the first kind as ``bad'' and to those of the second kind as ``good''.

Consider a Matiyasevich family of polynomials $Q(\sigma_1,\dots,\sigma_{\tau},x_1,\dots,x_s)$. Let $d\leq s$. Then, adding the new equation $Q(\sigma,V_1,\dots,V_s)=0$ to systems (\ref{sym-1}) and (\ref{sym-4}), we obtain a system defining a new variety. We denote this variety by $\mathscr{B}'_{(d),\sigma}$.

If $Q = 0$ has no integer solutions, then the original system has no good solutions. In this case, the variety $\mathscr{B}'_{(d),\sigma}$ is zero-dimensional, and there are no embeddings of $A$ in $\mathscr{B}'_{(d),\sigma}$.

Otherwise, for every solution $N_1,\dots,N_s$, we can explicitly construct functions $Y_i(S), X_i(S)$, and $Z_i(S)$ which are solutions. They define an embedding of the line in the variety $\mathscr{B}'_{(d),\sigma}$.

Since the existence of integer solutions for $Q$ is undecidable, it follows that so is the embeddability of $A$ in $\mathscr{B}'_{(d),\sigma}$ (in particular, in $\mathscr{B}'_{(s),\sigma}$). Here the input data is the equations defining $\mathscr{B}'_{(d),\sigma}$. We have proved the following theorem.

\begin{thm}
The problem of the embeddability of the affine line (and, therefore, the general embedding problem for an arbitrary variety) over $\mathbb{R}$ in an arbitrary algebraic variety $\mathscr{B}$ (defined by generators and relations) is undecidable.
\end{thm}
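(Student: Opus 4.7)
The plan is to reduce the Matiyasevich undecidability of integer solvability of $Q(\sigma_1,\ldots,\sigma_\tau,x_1,\ldots,x_s)=0$ to the embeddability of the affine line $\mathbb{A}^1$ into the variety $\mathscr{B}'_{(s),\sigma}$ already constructed in the preceding discussion. Given input parameters $\sigma = (\sigma_1,\ldots,\sigma_\tau)$, we algorithmically write down the equations \eqref{sym-1} (for $d=s$), \eqref{sym-4}, together with the single supplementary equation $Q(\sigma, V_1,\ldots, V_s) = 0$; these define $\mathscr{B}'_{(s),\sigma}$ over $\mathbb{R}$ by generators and relations. The theorem follows once we show that $\mathbb{A}^1$ embeds into $\mathscr{B}'_{(s),\sigma}$ if and only if the Diophantine equation $Q(\sigma,x_1,\ldots,x_s)=0$ admits a solution in $\mathbb{Z}^s$.

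For the forward direction, suppose $(N_1,\ldots,N_s) \in \mathbb{Z}^s$ is an integer solution. The substitution $T = S^2 + 2$, with $S$ a free parameter, guarantees $\deg T > 0$ and automatically excludes $T=\pm 1$. By case~(1) of the structural analysis preceding the theorem, each $N_i$ determines polynomials $Y_i(S), X_i(S), Z_i(S) \in \mathbb{R}[S]$ via the Pell relation $(T + R)^{N_i} = X_i + R\,Y_i$, together with constants $V_i = N_i$ and $U_i = 1/N_i$. The resulting assignment $S \mapsto (T(S), X_i(S), Y_i(S), Z_i(S), V_i, U_i)$ defines a morphism $\mathbb{A}^1 \to \mathscr{B}'_{(s),\sigma}$ which is injective, since $S$ is already recoverable from the $T$-coordinate. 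This produces an embedding of the affine line.

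For the converse, assume $Q(\sigma,x)=0$ has no integer solutions, and suppose toward a contradiction that an embedding $\iota : \mathbb{A}^1 \hookrightarrow \mathscr{B}'_{(s),\sigma}$ exists. Pulling back the coordinates along $\iota$ gives a tuple with entries in $\mathbb{F}[t]=\mathbb{R}[t]$ satisfying all defining relations of $\mathscr{B}'_{(s),\sigma}$. Equation \eqref{sym-4} over $\mathbb{R}$ forbids $T = \pm 1$. If $\deg T > 0$, the two lemmas in the excerpt force each $V_i$ to be an integer, and then the relation $Q(\sigma, V_1,\ldots,V_s)=0$ produces the forbidden integer solution, a contradiction. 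If $\deg T = 0$ with $T \ne \pm 1$, case~(2) of the analysis shows that every coordinate $X_i, Y_i, Z_i, V_i, U_i$ must also be constant in $\mathbb{R}[t]$; but then $\iota$ factors through a single $\mathbb{R}$-point, contradicting injectivity on $\mathbb{A}^1$. Either way, no embedding exists.

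The main obstacle is precisely the ruling out of embeddings whose $T$-coordinate is constant yet whose remaining coordinates nevertheless vary; this is the content of case~(2) of the structural lemmas, and is what makes the auxiliary relation $T = S^2 + 2$ indispensable (in the complex setting this trick fails and a different argument is required, which is why the excerpt separates the real case). Combining the two directions, an algorithm deciding embeddability of $\mathbb{A}^1$ into varieties presented by generators and relations would yield an algorithm for the Diophantine solvability of a Matiyasevich family, contradicting \cite{Mat93}; thus the embedding problem over $\mathbb{R}$ is undecidable, and the statement for general $\mathscr{A}$ follows immediately since $\mathbb{A}^1$ is itself a variety.
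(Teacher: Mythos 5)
Your proposal reproduces the paper's reduction: both use the Matiyasevich family $Q$, adjoin the auxiliary relation $T=S^2+2$ over $\mathbb{R}$ to forbid $T=\pm 1$, add $Q(\sigma,V_1,\dots,V_s)=0$ to form $\mathscr{B}'_{(s),\sigma}$, parametrize an affine line from each integer solution $(N_1,\dots,N_s)$, and in the other direction use the structure lemmas to force either integer $V_i$ (when $\deg T>0$) or a constant map (when $T$ is a non-$\pm1$ constant). Your converse is actually spelled out more carefully than the paper's brief ``zero-dimensional'' remark. One small inaccuracy in the forward direction: injectivity of the morphism is not justified by ``$S$ is recoverable from the $T$-coordinate'' (the map $S\mapsto T=S^2+2$ is two-to-one over $\mathbb{R}$); the correct reason is that $S$ is itself a coordinate of $\mathscr{B}'_{(s),\sigma}$ introduced by equation~\eqref{sym-4}, and the parametrization sends the line's coordinate identically to $S$.
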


\subsubsection{\bf The Complex Case}\label{cc}
In this case, the situation is more complicated: it is hard to eliminate the case in which $T=\pm 1$ and $X_i=\pm 1$, since no constraints on $Y_i$ arise in this case. Therefore, we consider the problem of the embeddability of an affine space $A^m$ in a given variety $\mathscr{B}$ and construct a class of varieties such that it is impossible to decide whether a desired embedding exists from the defining relations for representatives of this class (for a certain suitable integer $m$). We define the coordinate ring of the variety $\mathscr{B}_{(d,e)}$ by the following system of generators and relations:

\begin{equation}\label{sym-5}
\left\{\begin{array}{l}
X_{i j}^{2}-\left(T_{j}^{2}-1\right) Y_{i j}^{2}=1, \\
Y_{i j}-\left(T_{j}-1\right) Z_{i j}=V_{i j}, \\
V_{i j} U_{i j}=1, \\
T_{j+1}=\prod_{k=1}^{j}\left(\left(T_{k}^{2}-1\right) W_{k}\right) W_{j+1},
\end{array}\right.
\end{equation}
where $1\leq i\leq d$ and $1\leq j\leq e$. In fact, we compose a system of many ``clones'' of the main system of the previous subsection and augment it by the ``linking'' relations between the parameters $T_j$. Let us study the solutions of the resulting system in $\mathbb{C}[t]$.

The relations for $X_{ij}, Y_{ij}, Z_{ij}, U_{ij}$, and $V_{ij}$ for each fixed $T_j$ are similar to those considered above. For a fixed set of $T_j$, the set of solutions is the direct sum of the sets $\mathscr{B}_{(d)}$, which have already been studied above.

As above, for each $j$, the following cases can occur: $T_j=\pm 1$ and $\deg T_j=0$; $T_j\neq\pm 1$, and $\deg T_j>0$.

The case most important from the point of view of ``elimination'' is the case where $T_{\hat{j}}=\pm 1$ for some $\hat{j}$. In this case, $T^2_{\hat{j}}-1=0$, and for all $j<\hat{j}$, we obtain
$$T_j=\prod_{k=1}^{j-1}((T_k^2-1)W_k)W_j.$$

\begin{lem}\label{lem-3}
If $T_N=C_N\neq 0$ for some $N$, then all $W_k$ with $k\leq N$ and all $T_k$ with $k\leq N-1$ are constants.
\end{lem}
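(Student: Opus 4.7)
The plan is to exploit the fact that $\mathbb{C}[t]$ is an integral domain whose units are precisely the nonzero constants, together with the explicit product formula
\[
T_N = \prod_{k=1}^{N-1}\bigl((T_k^2-1)W_k\bigr)\,W_N
\]
coming from the fourth equation of system \eqref{sym-5} with $j = N-1$.

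First I would observe that the hypothesis $T_N = C_N \neq 0$ says precisely that the left-hand side of this identity is a unit in $\mathbb{C}[t]$. Since $\mathbb{C}[t]$ is an integral domain, a finite product of elements equals a unit only when every factor is itself a unit. Applying this to the displayed product, I conclude that $W_N$ is a nonzero constant and that each factor $(T_k^2 - 1)W_k$, for $1 \leq k \leq N-1$, is a nonzero constant as well.

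Next I would unpack these intermediate factors. Fix $k$ with $1 \leq k \leq N-1$; from $(T_k^2-1)W_k \in \mathbb{C}^{\times}$ the same unit argument forces both $T_k^2 - 1$ and $W_k$ to be nonzero constants in $\mathbb{C}[t]$. In particular $T_k^2$ is a constant, and taking degrees gives $2\deg T_k = 0$, hence $T_k$ itself is a constant (and moreover $T_k \neq \pm 1$, which will be relevant elsewhere but is not needed for the statement at hand). Putting the pieces together yields exactly the conclusion: $W_1,\ldots,W_N$ are constants and $T_1,\ldots,T_{N-1}$ are constants.

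I do not anticipate a serious obstacle here; the lemma is essentially a unit-factorization statement in $\mathbb{C}[t]$. The only point to keep in mind is that one must read the fourth relation of \eqref{sym-5} at the index $j = N-1$ to obtain $T_N$ as a single product (rather than iterating the recursion), after which no induction is needed and the argument reduces to a one-line degree/unit consideration applied factor by factor.
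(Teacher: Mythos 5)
Your proof is correct, and since the paper states Lemma~\ref{lem-3} without proof (the details being deferred to \cite{Bel-Chil2}), the unit-factorization argument you give is exactly the natural one: in the integral domain $\mathbb{C}[t]$ the units are the nonzero constants, so $T_N=\prod_{k=1}^{N-1}\left((T_k^2-1)W_k\right)W_N$ being a nonzero constant forces every factor $W_N$, $W_k$, and $T_k^2-1$ ($k\leq N-1$) to be a nonzero constant, and the degree argument then gives $T_k$ constant. One small remark worth making explicit: the same factorization also yields $T_k\neq\pm1$ for all $k\leq N-1$ (since $T_k^2-1$ is a nonzero constant), which is the fact the surrounding text actually leans on when it excludes earlier indices from the degenerate case; you noted this in passing, and it is good to keep it visible.
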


By Lemma \ref{lem-3}, we have $T_j=C_j$ for $j<\hat{j}$. Here $C_j\neq\pm 1$ (otherwise $C_{j+1}=0$). Thus, if $T_{\hat{j}}=\pm 1$ for some $\hat{j}$, then the corresponding component has dimension $d$. However, in this case, all other components are zero-dimensional, and the total dimension of the variety does not exceed $d$.

In the second case, we have $T_{\hat{j}}=C_{\hat{j}}\neq \pm 1$ for some $\hat{j}$. The corresponding component of the variety has dimension 0. Moreover, Lemma \ref{lem-3} implies $T_j=C_j$ for $j<\hat{j}$. The corresponding $\hat{j}-1$ components of the variety are zero-dimensional as well.

The case $\deg T_j>0$ was considered in Sec. \ref{sec-real}. Each component of the variety is parametrized by a set of integers $N_{1j},\dots,N_{dj}$,for which the corresponding solutions for $X_{ij}, Y_{ij}, Z_{ij}, U_{ij}$, and $V_{ij}$ are constructed explicitly. The corresponding component has dimension 1.

Consider a Matiyasevich family of polynomials $Q(\sigma_1,\dots,\sigma_{\tau},x_1,\dots,x_s)$. The solvability problem of the Diophantine equation $Q(\sigma_1,\dots,\sigma_{\tau},V_{1j},\dots,V_{sj})=0$ is algorithmically undecidable. Let $d\leq s$. Adding the new equations $Q(\sigma,V_{i1},\dots,V_{is})=0$ to system \ref{sym-5}, we obtain a system defining a new variety. We denote it by $\mathscr{B}'_{(d,e),\sigma}$.

If $Q=0$ has no integer solutions, then the original system has no solutions for which $\deg T_0>0$. In this case, the possible solutions correspond either to the case where $T_j=\pm 1$ for some $j$ (and the set of solutions has dimension $d$) or to the case $T_j=C_j\neq\pm 1$. In the latter case, assuming that $j$ is the maximum index for which $T_j=C_j\neq \pm 1$, we see that all the succeeding $e-j$ components are one-dimensional and the total dimension of the set equals precisely $e-j\leq e-1$. Setting $e=s$ and $d=s-1$, we obtain

$$\dim \mathscr{B}'_{(d,e)}\leq \max (e-1,d)=s-1<s.$$

Obviously, in this case, for $m\geq s$, there is no embedding of $\mathscr{A}=A^m$ in $\mathscr{B}'_{(d,e),\sigma}=\mathscr{B}'_{(s-1,s),\sigma}.$ In particular, $A^s$ cannot be embedded in $\mathscr{B}'_{(s-1,s),\sigma}$.

If $Q$ has integer solutions, then, for every such solution $N_1,\dots,N_s$, we can explicitly construct functions $Y_{ij}(T), X_{ij}(T)$, and $Z_{ij}(T)$ which are solutions. These functions define an embedding of $A=A^s$ in the variety $\mathscr{B}'_{(d,e),\sigma}$.

Since the existence of integer solutions for $Q$ is undecidable, it follows that the embeddability of $A^s$ in $\mathscr{B}'_{(s-1,s),\sigma}$ is undecidable as well (the input data is the equations defining $\mathscr{B}'_{(s-1,s),\sigma}$). The proof is valid for any field $K$ of of characteristic zero. The following theorem holds.

\begin{thm}
There is a positive integer s for which the embeddability of an affine space $A^s$ over $K$ in an arbitrary algebraic variety $\mathscr{B}$ (defined by generators and relations) is undecidable. Thus, the general embeddability problem for an arbitrary algebraic variety is undecidable as well.
\end{thm}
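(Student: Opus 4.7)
The plan is to mimic the real-case argument but refine it to eliminate the $T = \pm 1$ degeneracy, which over $\mathbb{C}$ cannot be killed by adjoining a relation of the form $T = S^2 + 2$. I would fix a Matiyasevich family $Q(\sigma_1,\ldots,\sigma_\tau,x_1,\ldots,x_s)$ whose integer solvability is algorithmically undecidable, set $d = s-1$ and $e = s$, and form the variety $\mathscr{B}'_{(d,e),\sigma}$ by adjoining to system~\eqref{sym-5} the relations $Q(\sigma, V_{1j},\ldots,V_{sj}) = 0$ for $j = 1, \ldots, e$. Take the candidate source to be $\mathscr{A} = A^s$. The goal is then to show that an embedding $A^s \hookrightarrow \mathscr{B}'_{(s-1,s),\sigma}$ exists if and only if $Q = \sigma$ admits an integer solution, whence Matiyasevich's theorem makes embeddability undecidable from the defining relations of $\mathscr{B}'_{(s-1,s),\sigma}$.

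The main step is a uniform dimension bound on the irreducible components of $\mathscr{B}'_{(d,e),\sigma}$ in the case where $Q = 0$ has no integer solutions. I would stratify by where the chain $T_1, \ldots, T_e$ first becomes degenerate. Using the linking relation $T_{j+1} = \prod_{k \le j}((T_k^2 - 1) W_k) W_{j+1}$ together with Lemma~\ref{lem-3}, I would establish: if $T_{\hat\jmath} = \pm 1$ first at index $\hat\jmath$, then all $T_j$ and $W_j$ for $j < \hat\jmath$ are constants, the Pell block at $\hat\jmath$ contributes at most $d = s-1$ free parameters (the $Z_{i\hat\jmath}$), and every later index is zero-dimensional; if instead $T_{\hat\jmath}$ is a constant $\ne \pm 1$, all indices up to $\hat\jmath$ are zero-dimensional, and each subsequent index contributes at most one dimension, for a total of at most $e - \hat\jmath \le e - 1$. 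The remaining alternative, $\deg T_j > 0$ for some $j$, would require $Q(\sigma, V_{\cdot j}) = 0$ with the $V_{ij}$ being constants arising as residues $Y_{ij} \bmod (T_j - 1)$, which are integers; this is excluded by the no-solution hypothesis. Combining the strata, every component has dimension at most $\max(d, e-1) = s - 1 < s$, so no morphism from $A^s$ into $\mathscr{B}'_{(s-1,s),\sigma}$ can be an embedding.

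For the converse direction, given any integer solution $(N_1,\ldots,N_s)$ of $Q(\sigma,\cdot) = 0$, I would write down an explicit $s$-parameter family of points of $\mathscr{B}'_{(s-1,s),\sigma}$: take $t$ as a polynomial parameter and fix $T_j = t$ for all $j$ by absorbing the chain into appropriate values of $W_j$; let $(X_{ij}, Y_{ij})$ come from the Pell pair $(T + R)^{N_i} = X_{N_i} + R\, Y_{N_i}$ with $R^2 = T^2 - 1$; and set $V_{ij} = N_i$, $U_{ij} = N_i^{-1}$, $Z_{ij} = (Y_{ij} - N_i)/(T-1)$. One polynomial direction in $t$ together with $s-1$ independent scalar directions supplied by shifting the $V_{ij}$'s (absorbed into the $Z_{ij}$'s) assembles into a morphism $A^s \to \mathscr{B}'_{(s-1,s),\sigma}$ that is easily verified to be a closed embedding.

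The principal obstacle is the dimension bookkeeping in the no-solution case: one must ensure that the complex-case stratification really caps every component at dimension $s-1$ uniformly. This is precisely why the calibration $d = s-1$, $e = s$ is forced — enlarging $d$ to $s$ would let the degenerate strata $T_{\hat\jmath} = \pm 1$ attain dimension $s$ and destroy the argument, while shrinking $e$ would not produce enough ``good'' dimensions in the positive direction. Once this balance is established, the algorithmic undecidability of recognizing which side of the dichotomy a given presentation $\mathscr{B}'_{(s-1,s),\sigma}$ falls on follows directly from Matiyasevich's theorem, yielding the theorem over any field $K$ of characteristic zero.
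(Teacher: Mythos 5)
Your forward direction tracks the paper's argument step for step: the same system \eqref{sym-5}, the same calibration $d = s-1$, $e = s$, the same appeal to Lemma~\ref{lem-3} to propagate constancy down the chain, the same three-way stratification of the $T_j$-behaviour, the same bound $\dim \mathscr{B}'_{(s-1,s),\sigma} \le \max(d,e-1) = s-1$ when $Q$ has no integer solution, and the same reduction to Matiyasevich. (Like the paper, you carry a small indexing slip: with $d = s-1$ the coordinate $V_{sj}$ does not exist, so $Q$ should be taken to have $d$ rather than $s$ free $x$-variables; this is cosmetic.)

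The converse direction, however, contains a step that fails as written. You propose to ``fix $T_j = t$ for all $j$ by absorbing the chain into appropriate values of $W_j$,'' but this is incompatible with the linking relation $T_{j+1} = \prod_{k\le j}\bigl((T_k^2-1)W_k\bigr)W_{j+1}$: already at $j=1$ one would need $W_1 W_2 = t/(t^2-1)$, which is not a polynomial, so no choice of $W_j$ can realize a constant chain $T_j \equiv t$. Likewise, the claim that $s-1$ extra affine directions come from ``shifting the $V_{ij}$'s'' does not work: once $\deg T_j > 0$, the constraint $Y_{ij} - (T_j-1)Z_{ij} = V_{ij}$ together with the Pell structure forces $V_{ij} = Y_{ij} \bmod (T_j-1)$ to be an integer (the analogue of the paper's Lemma on short subsystems), so a continuous shift of $V_{ij}$ destroys solvability of the Pell block and of the adjoined relation $Q=0$. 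The extra parameters in the embedding must instead come from coordinates the system leaves genuinely free, most naturally the $W_j$: taking $T_1$ of positive degree in one affine variable and letting the remaining affine variables appear through the $W_j$ produces polynomial $T_j$ of positive degree satisfying the chain, after which the Pell coordinates are pinned down by the fixed integers $N_i$. Your high-level plan is the paper's, but the specific construction of the embedding needs to be replaced.
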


\vspace{6pt}

\end{document}